\newcommand{\qbinom}[2] {{#1 \brack #2}_q}
\newtheorem{thm}{Theorem}[section]
\newtheorem{cor}[thm]{Corollary}
\newtheorem{lem}[thm]{Lemma}
\newtheorem{prop}[thm]{Proposition}
\theoremstyle{definition}
\newtheorem{defn}[thm]{Definition}
\theoremstyle{remark}
\newtheorem{rem}[thm]{Remark}
\newtheorem{exa}[thm]{Example}
\numberwithin{equation}{section}
\newcommand{\be}{\begin{equation}}
\newcommand{\ee}{\end{equation}}
\newcommand{\bea}{\begin{eqnarray}}
\newcommand{\eea}{\end{eqnarray}}
\begin{document}

\title[]{ The $q$-Lidstone series involving $q$-Bernoulli and $q$-Euler polynomials generated by the third Jackson $q$-Bessel function}%
\author{Z. Mansour and M. AL-Towailb}%
\address{Z. Mansour, Department of Mathematics, Faculty of Science, Cairo University, Giza, Egypt.}%
\email{zeinab@sci.cu.edu.eg}%
\address{M. AL-Towailb, Department of Computer Science and Engineering, King Saud University, Riyadh, KSA}%
\email{mtowaileb@ksu.edu.sa}%


\begin{abstract}
In this paper, we present $q$-Bernoulli and $q$-Euler polynomials generated by the third Jackson $q$-Bessel function to construct new types of $q$-Lidstone expansion theorem. We prove that the entire function may be expanded in terms of $q$-Lidstone polynomials which are  $q$-Bernoulli polynomials and the coefficients are the even powers of the $q$-derivative $\frac{\delta_q f(z)}{\delta_q z}$ at $0$ and $1$. The other forms expand the function in $q$-Lidstone polynomials based on $q$-Euler polynomials and the coefficients contain the even and odd powers of the $q$-derivative $\frac{\delta_q f(z)}{\delta_q z}$.
\end{abstract}

\keywords{$q$-Lidstone expansion theorem,
$q$-Bernoulli polynomials, $q$-Euler polynomials}

\subjclass[2010]{05A30, 11B68, 30B10, 30E20, 39A13}

\maketitle

\section{{\bf Introduction}}\label{Sec.1}
A Lidstone series provides a generalization of Taylor  series that approximates a given function in a neighborhood of two points instead of one \cite{Lidstone}. Recently, Ismail and Mansour \cite{Ismail and Mansour} introduced a $q$-analog of the Lidstone expansion theorem. They proved that, under certain conditions, an entire function $f(z)$ can be expanded in the form
\begin{equation}\label{q-Lidstone series} f(z)= \sum_{n=0}^{\infty} \Big[ A_n(z) D_{q^{-1}}^{2n}\, f(1)- B_n(z)D_{q^{-1}}^{2n}\, f(0)\Big],\end{equation}
where $A_n(z)$ and $B_n(z)$ are the $q$-Lidstone polynomials defined by
$$A_n(z)=\large{\eta}^1_{q^{-1}} B_n(z) \mbox{ and } B_n(z)=\frac{2^{2n+1}}{[2n+1]_q!}B_{2n+1}(z/2;q).$$
Here $\large{\eta}^y_{q^{-1}}$ denotes the $q$-translation operator defined by
\begin{equation*}{\large{\eta}_{q^{-1}}^y} z^n
=q^{\frac{n(n-1)}{2}}z^n(-y/z  ;q^{-1})_n=y^n(-z/y  ;q)_n,\end{equation*}
and $B_{n}(z;q)$ is the $q$-analog of the Bernoulli polynomials which defined by the
generating function
\begin{equation}
\dfrac{t\,E_q(zt)}{E_q(t/2)e_q(t/2)-1}=\sum_{n=0}^{\infty}B_n(z;q)
\frac{t^n}{[n]_q!},
\end{equation}
where $E_q(z)$ and $e_q(z)$ are the $q$-exponential functions defined by
 $$ E_q(z):= \sum_{j=0}^\infty q^{j(j-1)/2}\,\frac{z^j}{[j]_q!}; \, z\in \mathbb{C} \, \mbox{ and } \,
  e_q(z):= \sum_{j=0}^\infty \frac{z^j}{[j]_q!}; \, |z|< 1.$$
\vskip5mm

 This paper aims to construct the $q$-Lidstone polynomials which are $q$-Bernoulli and $q$-Euler polynomials generated by the third Jackson $q$-Bessel function, and then to derive two formula of $q$-Lidstone expansion theorem. More precisely,
 we will prove that the entire function may be expanded in terms  of $q$-Lidstone polynomials in two different forms. In the first form, the $q$-Lidstone polynomials are $q$-Bernoulli polynomials and the coefficients are the even powers of the $q$-derivative $\frac{\delta_q f(z)}{\delta_q z}$ at $0$ and $1$. The other form expand the function in $q$-Lidstone polynomials based on $q$-Euler polynomials and the coefficients contain the even and odd powers of the $q$-derivative $\frac{\delta_q f(z)}{\delta_q z}$.
 The publications \cite{Mansour and AL-Towaileb,Mansour and AL-Towaileb 2}  are the most affiliated with this work.
 \vskip5mm
This article is organized as follows: in Section 2, we state some definitions and present some background on $q$-analysis which we need in our investigations. In Section 3 and Section 4, we introduce $q$-Bernoulli and $q$-Euler polynomials generated by the third Jackson $q$-Bessel function. Section 5 contains a $q$-Lidstone expansion theorem involving $q$-Bernoulli polynomials while Section 6 contains a $q$-Lidstone series
involving $q$-Euler polynomials.

\section{{\bf Definitions and Preliminary results}}

Throughout this paper, unless otherwise is stated, $q$ is a positive number less than one and we follow the notations and terminology in \cite{AMbook,GR}.

The symmetric $q$-difference operator $\delta_q$ is defined by
$$ \delta_q f(z)= f(q^{\frac{1}{2}}z)-f(q^{\frac{-1}{2}}z),$$
(see \cite{Cardoso011,GR}) and then
\be\label{delta}
\frac{\delta_q f(z)}{\delta_q z}:= \dfrac{f(q^{\frac{1}{2}}z)-f(q^{\frac{-1}{2}}z)}{z(q^{\frac{1}{2}}- q^{\frac{-1}{2}})} \quad z\neq 0.
\ee

We use a third $q$-exponential function  $exp_q(z)$  which has the following series representation
 \be\label{Def EX} exp_q(z)=\sum_{n=0}^{\infty} \frac{q^{\frac{n(n-1)}{4}}}{[n]_q!}z^n;\quad z\in\mathbb{C}.\ee
This function has the property $ \displaystyle \lim_{q\rightarrow 1} exp_{q}(z)= e^z$ for $z\in\mathbb{C}$, and it is an entire function of $z$ of order zero (see \cite{GR}).
\begin{rem}
From the identity $[n]_{1/q}!=q^{\frac{n(1-n)}{2}}[n]_q!$, one can verify that
\be\label{exp_q&exp_{1/q}} exp_{q}(z)= exp_{q^{-1}}(z); \quad z\in\mathbb{C}.\ee
\end{rem}
We consider the domain $\displaystyle \Omega:= \{ z\in \mathbb{C}: \, |1-exp_q(z)|< 1\}$.
\begin{lem}\label{inverse of exp.}
Let $z\in \Omega$. Then \begin{equation}\label{the inverse of exp.}
\dfrac{1}{exp_q(z)}:= 1+ \sum_{n=1}^{\infty} c_n\, z^n\,,
\end{equation}
where
\begin{equation*} c_n= \sum_{k=1}^{n} (-1)^{k}\, \sum_{s_1+s_2+\ldots+s_k=n\atop  s_i>0\,(i=1,\ldots,k) } \dfrac{ q^{ \sum_{i=1}^k s_i(s_i-1)/4 }}{[s_1]_q! [s_2]_q!\ldots [s_{k}]_q!}\,.\end{equation*}
\end{lem}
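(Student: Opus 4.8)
The plan is to exploit the fact that on the domain $\Omega$ the quantity $w := 1 - exp_q(z)$ satisfies $|w| < 1$, so that $1/exp_q(z) = 1/(1-w)$ can be expanded as a geometric series and then reorganized as a single power series in $z$. First I would record, directly from the series representation \eqref{Def EX}, that
\[
1 - exp_q(z) = -\sum_{n=1}^{\infty} \frac{q^{n(n-1)/4}}{[n]_q!}\, z^n ,
\]
which vanishes at $z=0$. In particular $0\in\Omega$ and $1/exp_q(z)$ is holomorphic in a neighborhood of the origin (since $exp_q(0)=1\neq 0$), so it possesses a convergent Taylor expansion $1+\sum_{n\geq 1} c_n z^n$ there; the task then reduces to identifying the coefficients $c_n$.

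Next I would write, for $z\in\Omega$,
\[
\frac{1}{exp_q(z)} = \frac{1}{1-\big(1-exp_q(z)\big)} = \sum_{k=0}^{\infty}\big(1-exp_q(z)\big)^{k},
\]
the series converging because $|1-exp_q(z)|<1$. For each fixed $k\geq 1$, the $k$-th power is computed by the Cauchy product rule: the coefficient of $z^n$ in $\big(1-exp_q(z)\big)^{k}$ equals
\[
(-1)^{k}\sum_{\substack{s_1+\cdots+s_k=n\\ s_i>0}} \frac{q^{\sum_{i=1}^{k} s_i(s_i-1)/4}}{[s_1]_q!\,[s_2]_q!\cdots[s_k]_q!},
\]
where I have used that the exponents of $q$ from the individual factors simply add, since $q^{a(a-1)/4}\,q^{b(b-1)/4}=q^{(a(a-1)+b(b-1))/4}$. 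A key bookkeeping observation is that this inner sum is empty, hence zero, whenever $k>n$, because the constraint $s_1+\cdots+s_k=n$ with each $s_i\geq 1$ forces $k\leq n$; likewise the $k=0$ term contributes only the constant $1$.

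Finally I would collect the coefficient of $z^n$ for $n\geq 1$ by summing the contributions over $k$, the sum terminating at $k=n$ by the previous remark, which yields exactly the stated closed form for $c_n$. The one point requiring care — and the main, if mild, obstacle — is justifying the interchange of the summation over $k$ with the power series in $z$. I would handle this by restricting to a closed disk $|z|\leq r$ contained in $\Omega$, on which $\sum_{k}|1-exp_q(z)|^{k}$ converges uniformly and each summand is an absolutely convergent power series in $z$; absolute convergence then legitimizes the rearrangement into a single power series, and uniqueness of Taylor coefficients identifies the resulting expression with $c_n$, completing the argument.
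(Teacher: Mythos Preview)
Your proof is correct and follows essentially the same approach as the paper: expand $1/exp_q(z)$ as a geometric series in $1-exp_q(z)$ (equivalently, the paper writes $\sum_k(-1)^k(exp_q(z)-1)^k$), compute each power by the Cauchy product, and then collect the coefficient of $z^n$, noting that the sum over $k$ truncates at $k=n$. Your version is in fact slightly more careful than the paper's, since you explicitly justify the interchange of the $k$-sum with the power series via uniform convergence on compact subsets of $\Omega$, whereas the paper proceeds formally.
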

\begin{proof} Observe that, for $z\in \Omega$ the function $\dfrac{1}{exp_q(z)}$ can be represented as
\begin{equation*}
\dfrac{1}{exp_q(z)} := \Big[\frac{1}{1+(exp_q(z)-1)}\Big] = \sum_{k=0}^{\infty}(-1)^k \Big[ exp_q(z)-1 \Big]^k.\end{equation*}
Using the series expansion \eqref{Def EX} of $exp_q(z)$, we get
\begin{eqnarray*}
\dfrac{1}{exp_q(z)} &=&  \sum_{k=0}^{\infty}(-1)^k \left(\sum_{n=1}^{\infty} q^{n(n-1)/4 }\, \frac{z^n}{[n]_q!}\right)^k \\
&=& 1+\sum_{k=1}^{\infty}(-1)^k \left(\sum_{n=1}^{\infty} q^{n(n-1)/4 }\, \frac{z^{n}}{[n]_q!}\right)^k \\
&=&1+\sum_{k=1}^{\infty}(-1)^k\sum_{n=k}^{\infty} z^{n} \sum_{s_1+s_2+\ldots+s_k=n\atop  s_i>0\,(i=1,\ldots,k) }\dfrac{ q^{ \sum_{i=1}^k s_i(s_i-1)/4 }}{[s_1]_q! [s_2]_q!\ldots [s_{k}]_q!}\,.
\end{eqnarray*}
Put   $\displaystyle  a_n(k)= \sum_{s_1+s_2+\ldots+s_k=n\atop  s_i>0\,(i=1,\ldots,k) } \dfrac{ q^{ \sum_{i=1}^k s_i(s_i-1)/4 }}{[s_1]_q! [s_2]_q!\ldots [s_{k}]_q!}$. Then, the power series of  $\dfrac{1}{exp_q(z)}$ takes the form
\begin{equation*}
\dfrac{1}{exp_q(z)}= 1+ \sum_{n=1}^{\infty} z^n \, \sum_{k=1}^{n} (-1)^{k}a_n(k),
\end{equation*}and then we obtain the desired result.
\end{proof}

The $q$-sine and $q$-cosine, $S_q(z)$ and $C_q(z)$, are defined by $$ exp_q(iz):= C_q(z)+iS_q(z),$$ where
\begin{equation}\label{S&C}\begin{split} C_q(z) &:= \sum_{n=0}^{\infty} (-1)^n\, \frac{q^{n(n-\frac{1}{2})}}{[2n]_q!}z^{2n}, \\ S_q(z) &:= \sum_{n=0}^{\infty}(-1)^n\, \frac{q^{n(n+\frac{1}{2})}}{[2n+1]_q!}z^{2n+1}.\end{split}
 \end{equation}
 These functions can be written in terms of the third Jackson $q$-Bessel function or (Hahn-Exton $q$-Bessel
function \cite{Koelink and Swarttouw}) as
\begin{equation*}
\begin{split}
  C_q(z)&:= q^{-\frac{3}{8}}\, \frac{(q^2;q^2)_{\infty}}{(q;q^2)_{\infty}}((1-q)z)^{\frac{1}{2}}\, J^{(3)}_{-\frac{1}{2}}(q^{\frac{-3}{4}}(1-q)z;q^2),\\ S_q(z)&:= q^{\frac{1}{8}}\, \frac{(q^2;q^2)_{\infty}}{(q;q^2)_{\infty}}((1-q)z)^{\frac{1}{2}}\, J^{(3)}_{\frac{1}{2}}(q^{\frac{-1}{4}}(1-q)z;q^2),
   \end{split}\end{equation*}
and satisfy
\be\label{delta Sine and Cosine}
\frac{\delta_q C_q(wz)}{\delta_q z}= -w\,S_q(wz), \quad \frac{\delta_q S_q(wz)}{\delta_q z}= w\,C_q(wz).\ee
 (see~\cite{Cardoso011,GR}). Therefore, \be\label{delta E} \frac{\delta_q\, exp_q(wz)}{\delta_q z}= w\, exp_q(wz).\ee
\begin{figure}[h]
\includegraphics[width=10 cm, height=5cm]{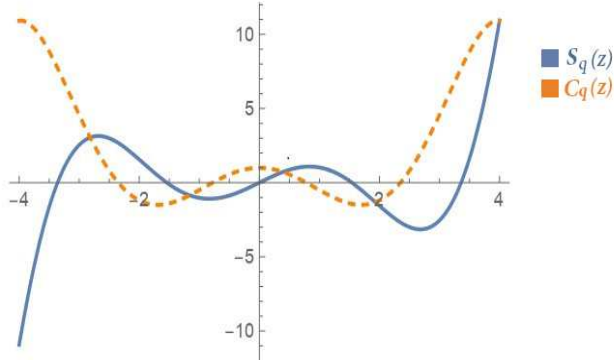}
\caption{The roots of $C_q(z)$ and $S_q(z)$ at $q=\frac{1}{2}$}
\label{Fig(1)}
\end{figure}
Note that since the third Jackson  $q$-Bessel functions have only real roots and the roots are simple (see \cite{Koelink and Swarttouw}), it follows that the roots of $C_q(z)$ and $S_q(z)$ are also real and simple as shown in Figure~\ref{Fig(1)}. Also, because $C_q(z)$ and $S_q(z)$ are respectively even and odd, the roots of these functions are symmetric.

\noindent Throughout this paper we assume that $S_1$ and $C_1$ are the smallest positive zero of the functions $S_q(z)$ and $C_1$, respectively.

Here, the $q$-analog of the hyperbolic functions $\sinh z$ and $\cosh z$ are defined for $z\in \mathbb{C}$ by
\be\label{sh+cosh}
\begin{gathered}
 Sinh_q(z):= -i S_q(iz)=\dfrac{exp_q(z)-exp_q(-z)}{2} \\
 Cosh_q(z):=  C_q(iz) = \dfrac{exp_q(z)+exp_q(-z)}{2}.\end{gathered}\ee
 \vskip 1cm
\section{{\bf A $q$-Bernoulli polynomials generated by the third Jackson $q$-Bessel function }} \label{q-Bernoulli and Euler}
In this section, we  use the third $q$-exponential function  $exp_q(x)$  to define a $q$-analog of the Bernoulli polynomials  which are suitable for our approach.

\begin{defn}\label{q-Bernoulli} A $q$-Bernoulli polynomials $\widetilde{B}_n(z;q)$ are defined by  the generating function
\be\label{GF:Bernoulli-poly} \dfrac{w\,exp_q(zw)\, exp_q(\frac{-w}{2})}{exp_q(\frac{w}{2})-exp_q(\frac{-w}{2})}=\sum_{n=0}^{\infty}\widetilde{B}_n(z;q) \frac{w^n}{[n]_q!},\ee
 and $\widetilde{\beta}_n(q):= \widetilde{B}_n(0;q)$ are  the $q$-Bernoulli numbers. Therefore,
 \be\label{qBernoulli-numbers}
\dfrac{w\, exp_q(-w/2)}{exp_q(\frac{w}{2})-exp_q(\frac{-w}{2})} =\sum_{n=0}^{\infty}\frac{\widetilde{\beta}_n(q)}{[n]_q!}w^n.\ee \end{defn}
\begin{rem}
$\widetilde{B}_{2n+1}(\frac{1}{2};q)=0$. Indeed, for $z=\frac{1}{2}$, the left hand side of Equation \eqref{GF:Bernoulli-poly} is an even function. Therefore, the odd powers of $w$ on the left hand side vanish. Also, note that $$\widetilde{B}_0(z;q)= \dfrac{w\,exp_q(zw)\, exp_q(\frac{-w}{2})}{exp_q(\frac{w}{2})-exp_q(\frac{-w}{2})}\Big|_{w=0}=1.$$
\end{rem}

\begin{prop}\label{prop:1-1}
  The $q$-Bernoulli polynomials $\widetilde{B}_n(z;q)$ are given recursively by $\widetilde{B}_0(z;q)=1$,  and for $n\in\mathbb{N}$ \[\widetilde{B}_n(z;q)=\sum_{k=0}^{n}\qbinom{n}{k} q^{\frac{k(k-1)}{4}}\, \widetilde{\beta}_{n-k}(q) z^k.\] \end{prop}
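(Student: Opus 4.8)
The plan is to read the formula straight off the generating function \eqref{GF:Bernoulli-poly} by recognizing that its left-hand side factors into a product of two series whose coefficients already have names. First I would compare \eqref{GF:Bernoulli-poly} with the defining relation \eqref{qBernoulli-numbers} for the $q$-Bernoulli numbers; this gives the factorization
\[
\frac{w\,exp_q(zw)\,exp_q(-w/2)}{exp_q(w/2)-exp_q(-w/2)}
= exp_q(zw)\left(\sum_{m=0}^{\infty}\frac{\widetilde{\beta}_m(q)}{[m]_q!}\,w^m\right),
\]
so the whole problem reduces to multiplying two known power series in $w$.

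Next I would insert the series \eqref{Def EX} for the third $q$-exponential, written as $exp_q(zw)=\sum_{k=0}^{\infty}\frac{q^{k(k-1)/4}}{[k]_q!}z^k w^k$, and form the Cauchy product with $\sum_{m}\widetilde{\beta}_m(q)w^m/[m]_q!$. Collecting the coefficient of $w^n$ on the right and equating it with the coefficient $\widetilde{B}_n(z;q)/[n]_q!$ read off the right-hand side of \eqref{GF:Bernoulli-poly} yields
\[
\frac{\widetilde{B}_n(z;q)}{[n]_q!}
=\sum_{k=0}^{n}\frac{q^{k(k-1)/4}}{[k]_q!}\,z^k\,\frac{\widetilde{\beta}_{n-k}(q)}{[n-k]_q!}.
\]
Multiplying through by $[n]_q!$ and using $\dfrac{[n]_q!}{[k]_q!\,[n-k]_q!}=\qbinom{n}{k}$ produces exactly the claimed identity. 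The base case $\widetilde{B}_0(z;q)=1$ is the $n=0$ instance, which is also recorded in the remark following Definition \ref{q-Bernoulli}.

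The only point requiring care --- and the closest thing to an obstacle in an otherwise routine computation --- is justifying the termwise multiplication of the two series. Since $exp_q$ is entire of order zero and the ratio $w\,exp_q(-w/2)/(exp_q(w/2)-exp_q(-w/2))$ has a removable singularity at $w=0$ (the simple zero of the denominator being cancelled by the numerator factor $w$), both factors are analytic on a common disc about the origin, so their power series converge absolutely there and the Cauchy product is legitimate. The resulting identity between the coefficients of $w^n$, being an identity of polynomials in $z$, then holds for every $n\in\mathbb{N}$.
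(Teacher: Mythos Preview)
Your proof is correct and follows essentially the same route as the paper's: factor the generating function as $exp_q(zw)$ times the $q$-Bernoulli-number series \eqref{qBernoulli-numbers}, take the Cauchy product, and compare coefficients of $w^n$. The only difference is that you add an explicit analyticity justification for the termwise multiplication, which the paper leaves implicit.
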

   \begin{proof}
 By substituting \eqref{qBernoulli-numbers} into \eqref{GF:Bernoulli-poly} and using the series representation of $exp_q(wz)$ we obtain
\begin{equation*}
\begin{split}
    \dfrac{w\,exp_q(zw)\, exp_q(\frac{-w}{2})}{exp_q(\frac{w}{2})-exp_q(\frac{-w}{2})}  &=\sum_{n=0}^{\infty}\widetilde{\beta}_n(q) \frac{w^{n}}{[n]_q!}\sum_{n=0}^{\infty} \frac{q^{\frac{n(n-1)}{4}}}{[n]_q!} (wz)^n \\ &= \sum_{n=0}^{\infty} \frac{w^{n}}{[n]_q!}
 \sum_{k=0}^{n}\qbinom{n}{k} q^{\frac{k(k-1)}{4}}\, \widetilde{\beta}_{n-k}(q) z^k.
 \end{split}
   \end{equation*}
 This implies
   \be \label{Eq:2}
  \sum_{n=0}^{\infty} \frac{w^{n}}{[n]_q!}\,
\sum_{k=0}^{n}\qbinom{n}{k} q^{\frac{k(k-1)}{4}}\, \widetilde{\beta}_{n-k}(q) z^k =\sum_{n=0}^{\infty} \widetilde{B}_n(z;q)\frac{w^n}{[n]_q!}. \ee
Comparing the coefficient of $\frac{w^n}{[n]_q!}$, we obtain the required result.
\end{proof}

\begin{prop}\label{Ber.q and 1/q}
For $n\in\mathbb{N}$ and $z\in \mathbb{C}$, we have
\bea \label{Equiv-Rel:1}\widetilde{B}_n(z;q)&=& q^{\frac{n(n-1)}{2}}\widetilde{B}_n(z;1/q),\\
\label{Equiv-Rel:2}
\widetilde{\beta}_n(q)&=&q^{\frac{n(n-1)}{2}}\widetilde{\beta}_n(1/q).
\eea
\end{prop}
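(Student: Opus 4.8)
The plan is to exploit the symmetry already recorded in the Remark following \eqref{exp_q&exp_{1/q}}, namely the identity $exp_q(z)=exp_{1/q}(z)$, together with the factorial relation $[n]_{1/q}!=q^{\frac{n(1-n)}{2}}[n]_q!$ that precedes it. The crucial observation is that the generating function \eqref{GF:Bernoulli-poly} is constructed entirely from the third $q$-exponential $exp_q$, so its left-hand side is literally unchanged when the parameter $q$ is replaced by $1/q$.

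First I would write the two generating-function identities side by side, one for the parameter $q$ and one for the parameter $1/q$. Since by \eqref{exp_q&exp_{1/q}} every factor $exp_q(zw)$, $exp_q(\pm w/2)$ is invariant under $q\mapsto 1/q$, the two left-hand sides coincide, and therefore
\[
\sum_{n=0}^{\infty}\widetilde{B}_n(z;q)\frac{w^n}{[n]_q!}
=\sum_{n=0}^{\infty}\widetilde{B}_n(z;1/q)\frac{w^n}{[n]_{1/q}!}.
\]
Next I would substitute the factorial identity in the form $\frac{1}{[n]_{1/q}!}=q^{\frac{n(n-1)}{2}}\frac{1}{[n]_q!}$ into the right-hand sum, so that both series are expressed against the common basis $w^n/[n]_q!$. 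Comparing the coefficient of $w^n/[n]_q!$ on the two sides then yields \eqref{Equiv-Rel:1} at once. Finally, specializing to $z=0$ and recalling that $\widetilde{\beta}_n(q)=\widetilde{B}_n(0;q)$ gives \eqref{Equiv-Rel:2} immediately.

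The argument is essentially a bookkeeping exercise, and there is no serious obstacle once the invariance of the generating function is noticed. The only point demanding care is the direction of the exponent in the factorial identity: one must check that reciprocating $[n]_{1/q}!=q^{\frac{n(1-n)}{2}}[n]_q!$ produces the factor $q^{+\frac{n(n-1)}{2}}$ rather than its inverse, since a sign slip here would flip the exponent in both stated relations. It is also worth confirming that the two power series converge on a common neighborhood of $w=0$, which they do because $exp_q$ is entire of order zero and the simple zero of the denominator at $w=0$ is cancelled by the factor $w$ in the numerator, so that term-by-term coefficient comparison is legitimate.
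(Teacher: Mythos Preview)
Your proof is correct and follows essentially the same route as the paper: replace $q$ by $1/q$ in the generating function \eqref{GF:Bernoulli-poly}, use \eqref{exp_q&exp_{1/q}} to see that the left-hand side is unchanged, convert $[n]_{1/q}!$ to $[n]_q!$ via the factorial identity, and compare coefficients; then specialize to $z=0$. The paper's proof is just a terser version of exactly this argument.
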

\begin{proof}
By replacing $q$ by $1/q$ on the generating function in \eqref{GF:Bernoulli-poly}, and then using Equation \eqref{exp_q&exp_{1/q}} we obtain
\[\sum_{n=0}^{\infty} q^{\frac{n(n-1)}{2}}\widetilde{B}_n(z;1/q)
\frac{w^n}{[n]_q!}=\sum_{n=0}^{\infty}\widetilde{B}_n(z;q)\frac{w^n}{[n]_q!}.\]
Equating the coefficients of $w^n$ yields \eqref{Equiv-Rel:1} and substituting with $z=0$ in \eqref{Equiv-Rel:1} yields directly \eqref{Equiv-Rel:2}.
\end{proof}
\vskip5mm
\begin{thm}\label{Eq.q-D(TH)}
The $q$-Bernoulli polynomials satisfy the $q$-difference equation
\be\label{Eq.q-D}  \frac{\delta_q \widetilde{B}_n(z;q)}{\delta_q z}=[n]_q\, \widetilde{B}_{n-1}(z;q)\quad
(n\in\mathbb{N}).\ee
\end{thm}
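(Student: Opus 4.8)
The plan is to apply the symmetric $q$-difference operator $\frac{\delta_q}{\delta_q z}$ to both sides of the generating function \eqref{GF:Bernoulli-poly}, viewing $w$ as a fixed parameter, and then to match the resulting power series in $w$ coefficient by coefficient. The only place where $z$ enters the left-hand side of \eqref{GF:Bernoulli-poly} is through the factor $exp_q(zw)$, so by linearity of $\frac{\delta_q}{\delta_q z}$ and the fact that it treats the $z$-free factors $w$, $exp_q(-w/2)$ and $exp_q(w/2)-exp_q(-w/2)$ as constants, differentiating the left-hand side reduces to computing $\frac{\delta_q exp_q(zw)}{\delta_q z}$.

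Here the key input is the eigenfunction property \eqref{delta E}, namely $\frac{\delta_q exp_q(wz)}{\delta_q z}=w\,exp_q(wz)$. Applying it, the $q$-difference of the left-hand side of \eqref{GF:Bernoulli-poly} equals $w$ times the left-hand side itself, that is
\be w\sum_{n=0}^{\infty}\widetilde{B}_n(z;q)\frac{w^n}{[n]_q!}=\sum_{n=0}^{\infty}\widetilde{B}_n(z;q)\frac{w^{n+1}}{[n]_q!}. \ee
On the right-hand side of \eqref{GF:Bernoulli-poly}, applying $\frac{\delta_q}{\delta_q z}$ term by term produces $\sum_{n=0}^{\infty}\frac{\delta_q\widetilde{B}_n(z;q)}{\delta_q z}\frac{w^n}{[n]_q!}$. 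Re-indexing the first series by $n\mapsto n-1$ and using $[n-1]_q!=[n]_q!/[n]_q$, comparison of the coefficient of $\frac{w^n}{[n]_q!}$ on the two sides gives exactly $\frac{\delta_q\widetilde{B}_n(z;q)}{\delta_q z}=[n]_q\,\widetilde{B}_{n-1}(z;q)$ for $n\in\mathbb{N}$, which is \eqref{Eq.q-D}. The $n=0$ case is consistent since $\widetilde{B}_0(z;q)=1$ is constant and contributes no $w^0$ term on the left.

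The one point that needs care — and which I expect to be the main obstacle — is justifying the term-by-term application of $\frac{\delta_q}{\delta_q z}$ and the interchange of the operator with the infinite sum; this is legitimate because $exp_q$ is entire of order zero, so the defining series of the $\widetilde{B}_n(z;q)$ converges locally uniformly and the finite-difference operator acts continuously on it. As a cross-check one can argue purely algebraically from Proposition~\ref{prop:1-1}: since $\frac{\delta_q z^k}{\delta_q z}=[k]_q z^{k-1}$, differentiating the explicit formula for $\widetilde{B}_n(z;q)$ and shifting the summation index reproduces the same recurrence after a short $q$-binomial manipulation, giving an independent verification.
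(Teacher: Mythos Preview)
Your proposal is correct and follows essentially the same route as the paper: differentiate the generating function \eqref{GF:Bernoulli-poly} in $z$ via the eigenfunction identity \eqref{delta E}, then match coefficients of $w^n$. The paper's proof is terser and omits your convergence remarks and the algebraic cross-check via Proposition~\ref{prop:1-1}, but the argument is the same.
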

\begin{proof}
Calculating the $q$-derivative $\delta_q$ of the two sides of \eqref{GF:Bernoulli-poly} with respect to the variable $z$ and using Equation \eqref{delta E}, we obtain
\begin{equation*}
\dfrac{w^2 exp_q(zw)exp_q(\frac{-w}{2})}{exp_q(\frac{w}{2})-exp_q(\frac{-w}{2})}=\sum_{n=1}^{\infty}
 \frac{\delta_q \widetilde{B}_n(z;q)}{\delta_q z} \frac{w^{n}}{[n]_q!}.
\end{equation*}
This implies
\begin{equation}\label{11} \sum_{n=1}^{\infty} \frac{\delta_q \widetilde{B}_n(z;q)}{\delta_q z} \,\frac{w^{n}}{[n]_q!}
= \sum_{n=1}^{\infty} \widetilde{B}_{n-1}(z;q)\,\frac{w^{n}}{[n-1]_q!}.\end{equation}
Equating the corresponding $n$th  power of $w$ in the two series of \eqref{11}, we obtain
the required result.
\end{proof}
\begin{cor}
For $ k\geq 2$, we have
\begin{equation*}\
\frac{\delta^{2}_q \widetilde{B}_{k}(z;q)}{\delta_q z^{2}} = [k]_q[k-1]_q \widetilde{B}_{k-2}(z;q).
\end{equation*}
\end{cor}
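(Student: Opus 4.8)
The plan is to read this off directly from Theorem \ref{Eq.q-D(TH)} by iterating the first-order $q$-difference relation twice. First I would apply \eqref{Eq.q-D} with $n=k$ to obtain
\[
\frac{\delta_q \widetilde{B}_{k}(z;q)}{\delta_q z}=[k]_q\,\widetilde{B}_{k-1}(z;q).
\]
Since this holds for every admissible $k\in\mathbb{N}$, the right-hand side is again a $q$-Bernoulli polynomial scaled by the constant $[k]_q$, so I would then apply the operator $\dfrac{\delta_q}{\delta_q z}$ one more time to both sides.

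The key observation I would record is that the symmetric $q$-difference operator is linear and commutes with multiplication by scalars that do not depend on $z$: directly from the definition $\delta_q f(z)=f(q^{1/2}z)-f(q^{-1/2}z)$ one has $\delta_q\big(c\,f(z)\big)=c\,\delta_q f(z)$ for any constant $c$, and hence the same holds for $\dfrac{\delta_q}{\delta_q z}$. Therefore $[k]_q$ pulls outside the operator, giving
\[
\frac{\delta^{2}_q \widetilde{B}_{k}(z;q)}{\delta_q z^{2}}
=\frac{\delta_q}{\delta_q z}\!\left([k]_q\,\widetilde{B}_{k-1}(z;q)\right)
=[k]_q\,\frac{\delta_q \widetilde{B}_{k-1}(z;q)}{\delta_q z}.
\]
A second application of \eqref{Eq.q-D}, now with $n=k-1$, turns the remaining $q$-derivative into $[k-1]_q\,\widetilde{B}_{k-2}(z;q)$, and combining the two factors yields the claimed identity.

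The only point requiring care, and the reason for the hypothesis $k\ge 2$, is that each invocation of Theorem \ref{Eq.q-D(TH)} is legitimate only when its index lies in $\mathbb{N}$. The first step uses index $k$ and the second uses index $k-1$; both are positive integers precisely when $k\ge 2$, which also guarantees that $\widetilde{B}_{k-2}(z;q)$ is well defined (with $\widetilde{B}_0(z;q)=1$ in the boundary case $k=2$). There is no genuine analytic obstacle here: the entire argument is a two-fold composition of an already-established recursion, so the proof is short, and the main thing to get right is simply the bookkeeping of the index range and the linearity that lets $[k]_q$ escape the operator.
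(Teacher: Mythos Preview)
Your proof is correct and follows essentially the same approach as the paper: both obtain the corollary by applying Theorem~\ref{Eq.q-D(TH)} (equation~\eqref{Eq.q-D}) twice, the second application at index $k-1$, with linearity of the operator allowing the scalar $[k]_q$ to pass through. Your write-up is in fact more explicit than the paper's one-line justification.
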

\begin{proof}
It follows directly by calculating the derivative $\delta_q$ of
\eqref{Eq.q-D} for even and odd index of $\frac{\widetilde{B}_k(z;q)}{[k]_q!}$.
\end{proof}

\begin{prop}
The $q$-Bernoulli numbers of odd index satisfy
\be\label{odd-Bernoulli-numbers} \widetilde{\beta}_1(q)=-\frac{1}{2},
\quad \widetilde{\beta}_{2n+1}(q)=0; \quad n\in\mathbb{N}.\ee
\end{prop}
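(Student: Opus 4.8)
The plan is to deduce both statements from a single parity property of the generating function of the $q$-Bernoulli numbers, so that no recursion needs to be unwound. Write
\[
F(w):=\dfrac{w\, exp_q(-w/2)}{exp_q(\frac{w}{2})-exp_q(\frac{-w}{2})}=\sum_{n=0}^{\infty}\frac{\widetilde{\beta}_n(q)}{[n]_q!}\,w^n,
\]
which is exactly the series defining the $q$-Bernoulli numbers in \eqref{qBernoulli-numbers}, with $\widetilde{\beta}_0(q)=1$.

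First I would compute $F(-w)$ directly. Under $w\mapsto -w$ the two exponentials $exp_q(w/2)$ and $exp_q(-w/2)$ are interchanged, so the denominator merely changes sign while the numerator becomes $-w\,exp_q(w/2)$; the two sign changes cancel and leave
\[
F(-w)=\dfrac{w\, exp_q(w/2)}{exp_q(\frac{w}{2})-exp_q(\frac{-w}{2})}.
\]
Subtracting the two expressions, the numerators combine to $w\bigl(exp_q(w/2)-exp_q(-w/2)\bigr)$, which cancels the common denominator and produces the clean functional equation
\[
F(-w)-F(w)=w.
\]

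The second step is to extract the parity. Setting $G(w):=F(w)+\tfrac{w}{2}$, one has $G(-w)=F(-w)-\tfrac{w}{2}=\bigl(F(w)+w\bigr)-\tfrac{w}{2}=G(w)$, so $G$ is an even function of $w$ and all of its odd-order Taylor coefficients vanish. Equivalently, one may observe that $G(w)=\tfrac{w\,Cosh_q(w/2)}{2\,Sinh_q(w/2)}$ by the computation above together with \eqref{sh+cosh}, and this is visibly even because $Cosh_q$ is even and $Sinh_q$ is odd. Since the term $\tfrac{w}{2}$ alters only the coefficient of $w^1$, the series of $G$ reads $1+\bigl(\widetilde{\beta}_1(q)+\tfrac12\bigr)w+\sum_{n\ge 2}\frac{\widetilde{\beta}_n(q)}{[n]_q!}w^n$. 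Comparing coefficients, evenness forces the coefficient of $w^{1}$ to be zero, which gives $\widetilde{\beta}_1(q)=-\tfrac12$ (using $[1]_q!=1$), and it forces the coefficient of each $w^{2n+1}$ with $n\ge 1$ to vanish, which gives $\widetilde{\beta}_{2n+1}(q)=0$.

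I do not expect a genuine obstacle here; the only point requiring care is the sign bookkeeping in forming $F(-w)$, where both numerator and denominator flip, so one must check that a single net sign survives to yield $F(-w)-F(w)=w$ rather than $0$. It is worth noting that this parity phenomenon is the same one underlying the earlier remark that $\widetilde{B}_{2n+1}(\tfrac12;q)=0$, which is the $z=\tfrac12$ manifestation of the evenness of the shifted generating function.
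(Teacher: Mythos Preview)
Your proof is correct and follows essentially the same route as the paper: both arguments rest on the single functional identity $F(-w)-F(w)=w$ for the generating series $F(w)=\sum_n\widetilde{\beta}_n(q)\,w^n/[n]_q!$, obtained by noting that $w\mapsto -w$ swaps the two exponentials and flips the sign of the denominator. The paper reads off the odd coefficients directly from $\sum_n\bigl(1-(-1)^n\bigr)\widetilde{\beta}_n(q)\,w^n/[n]_q!=-w$, while you repackage the same identity as the evenness of $G(w)=F(w)+\tfrac{w}{2}$; this is a cosmetic difference only.
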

\begin{proof}
Observe that,
\begin{equation*}
\dfrac{w\, exp_q(\frac{-w}{2})}{exp_q(\frac{w}{2})-exp_q(\frac{-w}{2})} =-w + \dfrac{w\, exp_q(\frac{-w}{2})}{exp_q(\frac{w}{2})-exp_q(\frac{-w}{2})}.\end{equation*}
So, we can write Equation \eqref{qBernoulli-numbers} in the form
\begin{equation*}
\sum_{n=0}^{\infty}\frac{\widetilde{\beta}_n(q)}{[n]_q!} w^n= -w+ \sum_{n=0}^{\infty}\frac{\widetilde{\beta}_n(q)}{[n]_q!} (-w)^n.
\end{equation*}
This implies
\begin{equation*}
\sum_{n=0}^{\infty} (1-(-1)^n)\, \widetilde{\beta}_n(q)\,\frac{w^n}{[n]_q!}= -w.
\end{equation*}
Therefore, $\widetilde{\beta}_1(q)=-\frac{1}{2}$ and $\widetilde{\beta}_{2n+1}(q)=0$ for every
$n\in\mathbb{N}$.
\end{proof}
\vskip5mm
\begin{thm}\label{sum:B}
For $z\in\mathbb{C}$ and $n\in\mathbb{N}$, we have the identity
\begin{equation*}\frac{q^{\frac{n(n-1)}{4}}}{[n]_q!}\,(\frac{-1}{2})^{n} (2q^{\frac{1-n}{2}}\, z;q)_n=\sum_{k=0}^{[\frac{n}{2}]}(\frac{1}{2})^{2k} \frac{q^{\frac{k(2k+1)}{2}}}{[2k+1]_q!}\, \frac{\widetilde{B}_{n-2k}(z;q)}{[n-2k]_q!}.
\end{equation*}
\end{thm}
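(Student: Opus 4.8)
The plan is to recognize the right-hand side as the coefficient of $w^n$ in a Cauchy product of two power series, and then to collapse that product into a single closed-form generating function. First I would observe that, since $k(k+\tfrac12)=\tfrac{k(2k+1)}{2}$, the series for $Sinh_q$ together with \eqref{sh+cosh} gives
\[
\frac{exp_q(w/2)-exp_q(-w/2)}{w}=\frac{2\,Sinh_q(w/2)}{w}=\sum_{k=0}^{\infty}\left(\tfrac12\right)^{2k}\frac{q^{\frac{k(2k+1)}{2}}}{[2k+1]_q!}\,w^{2k}.
\]
Thus the weights $(\tfrac12)^{2k}q^{\frac{k(2k+1)}{2}}/[2k+1]_q!$ that appear on the right-hand side of the claim are exactly the Taylor coefficients of $\dfrac{exp_q(w/2)-exp_q(-w/2)}{w}$.

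Next I would form the product of this series with the generating function \eqref{GF:Bernoulli-poly} of the $\widetilde{B}_m(z;q)$. By the Cauchy product the coefficient of $w^n$ in
\[
\frac{exp_q(w/2)-exp_q(-w/2)}{w}\sum_{m=0}^{\infty}\widetilde{B}_m(z;q)\frac{w^m}{[m]_q!}
\]
is precisely the right-hand side of the identity: the even exponent $2k$ forces $m=n-2k$ and truncates the sum at $k=[\tfrac{n}{2}]$. On the other hand, the factor $\tfrac{exp_q(w/2)-exp_q(-w/2)}{w}$ cancels against the denominator and the leading $w$ in \eqref{GF:Bernoulli-poly}, leaving the closed form $exp_q(zw)\,exp_q(-w/2)$.

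It then remains to compute the coefficient of $w^n$ in $exp_q(zw)\,exp_q(-w/2)$ and to match it with the left-hand side. Using \eqref{Def EX} for both factors and taking the Cauchy product, that coefficient equals $\sum_{j=0}^{n}\frac{q^{j(j-1)/4}}{[j]_q!}\frac{q^{(n-j)(n-j-1)/4}}{[n-j]_q!}(-\tfrac12)^{n-j}z^{\,j}$. To identify this with the left-hand side I would expand the $q$-shifted factorial by the $q$-binomial theorem,
\[
(2q^{\frac{1-n}{2}}z;q)_n=\sum_{k=0}^{n}\qbinom{n}{k}(-1)^k\,2^k\,q^{\frac{k(k-n)}{2}}z^k,
\]
and compare the two expressions coefficientwise in $z^k$.

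The main obstacle is the bookkeeping of the powers of $q$ in this final comparison. After pulling the factor $\frac{q^{n(n-1)/4}}{[n]_q!}(-\tfrac12)^n$ out of the left-hand side, one must verify that
\[
\frac{n(n-1)}{4}+\frac{k(k-n)}{2}=\frac{k(k-1)}{4}+\frac{(n-k)(n-k-1)}{4}\qquad(0\le k\le n),
\]
and that the signs and powers of $2$ also coincide. Both exponents collapse to $\frac{n^2-n+2k^2-2kn}{4}$, while the sign–and–power-of-two factors on each side reduce to $(-1)^{n+k}2^{\,k-n}$, so the two evaluations agree term by term. Equating the coefficients of $w^n$ in the two computations of the product then yields the stated identity. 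I expect no analytic subtlety, since $exp_q$ is entire of order zero, so all series involved converge and may be freely multiplied and rearranged.
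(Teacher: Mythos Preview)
Your proposal is correct and follows essentially the same route as the paper: both arguments rewrite \eqref{GF:Bernoulli-poly} as $exp_q(zw)\,exp_q(-w/2)=\dfrac{exp_q(w/2)-exp_q(-w/2)}{w}\sum_m\widetilde{B}_m(z;q)\dfrac{w^m}{[m]_q!}$, expand the first factor on the right as a series in $w^{2k}$, and compare coefficients of $w^n$. The only difference is that the paper asserts the closed form \eqref{Eq.2} for the coefficients of $exp_q(zw)\,exp_q(-w/2)$ without details, whereas you supply the verification via the $q$-binomial theorem and the exponent check.
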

\begin{proof}
By using \eqref{GF:Bernoulli-poly}, we have
\be\label{Eq.1} w\,exp_q(zw)\, exp_q(\frac{-w}{2})= \Big[exp_q(\frac{w}{2})-exp_q(\frac{-w}{2})\Big]\sum_{n=0}^{\infty}\widetilde{B}_n(z;q) \frac{w^n}{[n]_q!}.\ee
Using the series representation of $exp_q(zw)$, we can prove that
\begin{equation}\label{Eq.2}
exp_q(zw)\, exp_q(\frac{-w}{2})= \sum_{n=0}^{\infty} q^{\frac{n(n-1)}{4}}\frac{w^n}{[n]_q!}(\frac{-1}{2})^{n}(2q^{\frac{1-n}{2}} z;q)_n.  \end{equation}
Substituting \eqref{Eq.2} into \eqref{Eq.1} and using \eqref{Def EX}, we obtain
\begin{equation*} \begin{split}
&\sum_{n=0}^{\infty} q^{\frac{n(n-1)}{4}} \frac{w^n}{[n]_q!}(\frac{-1}{2})^{n}(2q^{\frac{1-n}{2}} z;q)_n \\ &= \sum_{n=0}^{\infty} \frac{q^{\frac{n(2n+1)}{2}}}{[2n+1]_q!} (\frac{w}{2})^{2n}\sum_{n=0}^{\infty}\widetilde{B}_n(z;q) \frac{w^n}{[n]_q!} \\ &= \sum_{n=0}^{\infty} w^n \sum_{k=0}^{[\frac{n}{2}]}(\frac{1}{2})^{2k} \frac{q^{\frac{k(2k+1)}{2}}}{[2k+1]_q!}\, \frac{\widetilde{B}_{n-2k}(z;q)}{[n-2k]_q!}.
\end{split}\end{equation*}
Comparing the coefficient of $w^n$ we obtain the required result.
\end{proof}

\noindent Note that if we substitute with $z=0$ in the identity of Theorem \ref{sum:B}, we get the following recurrence relation:
\begin{cor}
For $n\in\mathbb{N}$, we have
\begin{equation*} \frac{q^{\frac{n(2n-1)}{2}}}{[2n]_q!}\,(\frac{1}{2})^{2n}=\sum_{k=0}^{n}(\frac{1}{2})^{2k} \frac{q^{\frac{k(2k+1)}{2}}}{[2k+1]_q!}\, \frac{\widetilde{\beta}_{2n-2k}(q)}{[2n-2k]_q!}.
\end{equation*}\end{cor}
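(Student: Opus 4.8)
The plan is to obtain the stated recurrence directly from Theorem~\ref{sum:B} by specializing both the running index and the variable $z$. Since the left-hand side of the corollary carries the exponent $q^{n(2n-1)/2}$ together with the even factor $(1/2)^{2n}$, while the summation runs up to $k=n$ and involves only $q$-Bernoulli \emph{numbers} of even index $\widetilde{\beta}_{2n-2k}(q)$, the natural move is to apply the identity of Theorem~\ref{sum:B} with its index $n$ replaced by $2n$, and then set $z=0$.

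First I would rewrite Theorem~\ref{sum:B} with $n$ replaced by $2n$, so that the upper summation limit $[\tfrac{2n}{2}]$ becomes exactly $n$ and every $q$-Bernoulli polynomial occurring has even degree $2n-2k$. Next I would substitute $z=0$. On the right-hand side this turns each $\widetilde{B}_{2n-2k}(0;q)$ into the $q$-Bernoulli number $\widetilde{\beta}_{2n-2k}(q)$ by the defining relation (Definition~\ref{q-Bernoulli}), which is precisely the sum on the right of the corollary.

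The only point needing a short verification is the left-hand side. At $z=0$ the $q$-shifted factorial collapses, since $(2q^{(1-2n)/2}\cdot 0;q)_{2n}=(0;q)_{2n}=1$; the sign contributes nothing because $(-1/2)^{2n}=(1/2)^{2n}$; and the prefactor exponent simplifies via $\tfrac{2n(2n-1)}{4}=\tfrac{n(2n-1)}{2}$. Assembling these three simplifications produces $\frac{q^{n(2n-1)/2}}{[2n]_q!}(1/2)^{2n}$, matching the left-hand side exactly.

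There is no substantial obstacle here, as the whole argument is a specialization of an already-proved identity. The only mild subtlety worth flagging is \emph{why one takes the index to be $2n$ rather than an arbitrary $n$}: for odd index the corresponding specialization of Theorem~\ref{sum:B} carries no new content, since then every $\widetilde{\beta}_{n-2k}(q)$ has odd subscript and, by \eqref{odd-Bernoulli-numbers}, all such terms vanish apart from the lone contribution of $\widetilde{\beta}_1(q)=-\tfrac12$. Restricting to the even index $2n$ is exactly what yields a genuine recurrence among the even-order $q$-Bernoulli numbers.
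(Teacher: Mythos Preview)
Your proposal is correct and follows essentially the same approach as the paper: the paper simply states that one substitutes $z=0$ in the identity of Theorem~\ref{sum:B}, and your argument makes explicit the accompanying replacement $n\mapsto 2n$ together with the straightforward simplifications $(0;q)_{2n}=1$, $(-1/2)^{2n}=(1/2)^{2n}$, and $\tfrac{2n(2n-1)}{4}=\tfrac{n(2n-1)}{2}$.
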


\noindent As a consequence of the above result, we have \begin{equation*}\begin{gathered} \widetilde{\beta}_{0}(q)=1, \quad \widetilde{\beta}_{1}(q)= -\frac{1}{2}, \quad \widetilde{\beta}_{2}(q)= \dfrac{(1-q^3)q^{\frac{1}{2}}-(1-q)\,q^{\frac{3}{2}}}{4(1-q^3)},\\
\widetilde{\beta}_{3}(q)=0, \quad \widetilde{\beta}_{4}=\dfrac{q^3(q^3;q^2)_2-[3]_q(q^5(1-q)(1-q^3)}{16(1-q^3)^2(1-q^5)}-\\
\dfrac{(1+q^2)(1-q)^2(1-q^5)(q^2(1-q^3)-q^3(1-q)))}{16(1-q^3)^2(1-q^5)}.
 \end{gathered}\end{equation*}
\vskip5mm
\noindent In the following result we prove that the function $Coth_q(z)$ has a $q$-analog of Taylor series expression with only odd exponents for $z$.
\begin{prop}\label{coth_q}
Let $w$ be a complex number such that $0 <|\frac{w}{2}|< C_1$. Then
\begin{equation*}Coth_q(\frac{w}{2})= (\frac{w}{2})^{-1}+ \sum_{n=1}^{\infty}2\widetilde{\beta}_{2n}(q) \frac{w^{2n-1}}{[2n]_q!}.
\end{equation*}\end{prop}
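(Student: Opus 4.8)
The plan is to identify $Coth_q(w/2)$ as an elementary modification of the generating kernel in \eqref{qBernoulli-numbers} for the $q$-Bernoulli numbers, and then to read off its coefficients using the vanishing of the odd-index numbers recorded in \eqref{odd-Bernoulli-numbers}. Throughout I would write $G(w)$ for the left-hand side of \eqref{qBernoulli-numbers}, so that $G(w)=\sum_{n\ge 0}\widetilde{\beta}_n(q)\,w^n/[n]_q!$; the whole computation then rests on a single algebraic identity, and everything else is bookkeeping.

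First I would expand the quotient straight from the definitions in \eqref{sh+cosh},
\[
Coth_q(w/2)=\frac{Cosh_q(w/2)}{Sinh_q(w/2)}=\frac{exp_q(w/2)+exp_q(-w/2)}{exp_q(w/2)-exp_q(-w/2)}.
\]
The decisive step is to split the numerator so that the kernel of \eqref{qBernoulli-numbers} surfaces: writing the $+exp_q(w/2)$ as $\bigl(exp_q(w/2)-exp_q(-w/2)\bigr)+2\,exp_q(-w/2)$ gives
\[
Coth_q(w/2)=1+\frac{2\,exp_q(-w/2)}{exp_q(w/2)-exp_q(-w/2)}=1+\frac{2}{w}\,G(w).
\]
This identity is the heart of the argument.

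Next I would substitute the power series of $G$. Using $\widetilde{\beta}_0(q)=\widetilde{B}_0(0;q)=1$ from Definition~\ref{q-Bernoulli}, together with $\widetilde{\beta}_1(q)=-\frac{1}{2}$ and $\widetilde{\beta}_{2n+1}(q)=0$ for $n\in\mathbb{N}$ from \eqref{odd-Bernoulli-numbers}, only the constant, the linear, and the even-index terms survive, so that
\[
G(w)=1-\frac{w}{2}+\sum_{n=1}^{\infty}\widetilde{\beta}_{2n}(q)\,\frac{w^{2n}}{[2n]_q!}.
\]
Inserting this into the previous identity, the contribution $\frac{2}{w}\cdot(-\frac{w}{2})=-1$ cancels the leading $1$, the factor $\frac{2}{w}$ turns the constant term into $(w/2)^{-1}$, and it lowers each even power $w^{2n}$ to $w^{2n-1}$, yielding exactly
\[
Coth_q(w/2)=\left(\frac{w}{2}\right)^{-1}+\sum_{n=1}^{\infty}2\,\widetilde{\beta}_{2n}(q)\,\frac{w^{2n-1}}{[2n]_q!},
\]
which is the claimed formula.

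The only genuinely analytic point, and the step I would treat with care, is the region of validity, since these rearrangements are legitimate only where $G$ is represented by a convergent series and $Sinh_q(w/2)\neq 0$. By \eqref{sh+cosh} one has $Sinh_q(z)=-iS_q(iz)$, so the nonzero zeros of $Sinh_q$ occur at $z=\pm i\,s$ with $s$ a positive zero of $S_q$; hence, apart from the simple pole at the origin produced by the $2/w$ factor, the nearest singularity of $w\mapsto Coth_q(w/2)$ sits at $|w/2|=S_1$. Because the zeros of $C_q$ and $S_q$ interlace with $C_1<S_1$ (cf.\ Figure~\ref{Fig(1)}), the punctured disk $0<|w/2|<C_1$ lies strictly inside this radius; there the kernel of \eqref{qBernoulli-numbers} is analytic, the Maclaurin series of $G$ converges absolutely, and the term-by-term manipulations above are justified, which is precisely the range asserted in the statement.
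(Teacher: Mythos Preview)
Your proof is correct and follows essentially the same approach as the paper: both arguments relate $Coth_q(w/2)$ to the generating function $G(w)$ of \eqref{qBernoulli-numbers} by an elementary algebraic identity and then read off the coefficients. The only cosmetic difference is that the paper symmetrizes, using $G(w)+G(-w)=w\,Coth_q(w/2)$ so that the odd powers cancel automatically, whereas you write $Coth_q(w/2)=1+\frac{2}{w}G(w)$ and invoke \eqref{odd-Bernoulli-numbers} explicitly; your discussion of the domain of validity is in fact more careful than the paper's.
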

\begin{proof}
 By using Equation \eqref{qBernoulli-numbers} and the identity
\begin{equation*}Coth_q(\frac{w}{2}) = \dfrac{exp_q(\frac{w}{2})+ exp_q(\frac{-w}{2})}{exp_q(\frac{w}{2})-exp_q(\frac{-w}{2})}\, ,\end{equation*}
we obtain \begin{equation*} \begin{split}
\sum_{n=0}^{\infty}\widetilde{\beta}_{n}(q) \frac{w^{n}}{[n]_q!} &= w\,Coth_q(\frac{w}{2})- \dfrac{w\,exp_q(\frac{w}{2})}{exp_q(\frac{w}{2})-exp_q(\frac{-w}{2})}\\
 &= w\,Coth_q(\frac{w}{2})- \sum_{n=0}^{\infty}\widetilde{\beta}_{n}(q) \frac{(-w)^{n}}{[n]_q!}.
\end{split}\end{equation*} Therefore, $\displaystyle w\,Coth_q(\frac{w}{2})= 1+ \sum_{n=1}^{\infty}2\widetilde{\beta}_{2n}(q) \frac{(w)^{2n}}{[2n]_q!}$ and then the result follows.\end{proof}
\vskip5mm
We define the polynomials $\widetilde{A}_n(z;q)$ by the generating function
\be\label{A_n(z;q)}
\dfrac{w\,exp_q(zw)}{exp_q(\frac{w}{2})-exp_q(\frac{-w}{2})}=\sum_{n=0}^{\infty}\widetilde{A}_n(z;q) \frac{w^n}{[n]_q!}.\ee

\begin{prop}\label{Prop:B&A}
For $n\in\mathbb{N}$, the $q$-Bernoulli polynomials $\widetilde{B}_n(z;q)$ can be represented in terms of $\widetilde{A}_{n}(z;q)$ as
\begin{equation}\label{prop:1-2}
\widetilde{B}_n(z;q)=\sum_{k=0}^{n}\qbinom{n}{k} \, (-\frac{1}{2})^k\, q^{\frac{k(k-1)}{4}}\, \widetilde{A}_{n-k}(z;q).
\end{equation} \end{prop}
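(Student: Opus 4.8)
The plan is to exploit the fact that the two generating functions \eqref{GF:Bernoulli-poly} and \eqref{A_n(z;q)} differ by a single explicit factor. Comparing the two definitions, I observe that the defining ratio for $\widetilde{B}_n(z;q)$ is obtained from the defining ratio for $\widetilde{A}_n(z;q)$ simply by multiplying by $exp_q(-w/2)$; hence
\[\sum_{n=0}^{\infty}\widetilde{B}_n(z;q) \frac{w^n}{[n]_q!} = exp_q\Big(\frac{-w}{2}\Big) \sum_{n=0}^{\infty}\widetilde{A}_n(z;q) \frac{w^n}{[n]_q!}.\]

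Next I would expand $exp_q(-w/2)$ by its series representation \eqref{Def EX}, writing $exp_q(-w/2)=\sum_{k=0}^{\infty} \frac{q^{k(k-1)/4}}{[k]_q!}(-\tfrac12)^k w^k$, and then form the Cauchy product of the two power series appearing on the right-hand side above. Collecting the coefficient of $w^n$ gives $\sum_{k=0}^{n} \frac{q^{k(k-1)/4}}{[k]_q!}(-\tfrac12)^k \frac{\widetilde{A}_{n-k}(z;q)}{[n-k]_q!}$, whereas the coefficient of $w^n$ on the left-hand side is $\widetilde{B}_n(z;q)/[n]_q!$. Equating the two and multiplying through by $[n]_q!$ turns the factorial quotient $[n]_q!/([k]_q![n-k]_q!)$ into the $q$-binomial coefficient $\qbinom{n}{k}$, which is precisely the claimed identity \eqref{prop:1-2}.

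I do not expect any genuine obstacle here: the argument is the same coefficient-comparison device already used in the proof of Proposition~\ref{prop:1-1}, and the only point to watch is that all steps are valid as identities of formal power series in $w$ (equivalently, as convergent expansions for $w$ in a neighbourhood of $0$, where both generating functions are analytic), so that term-by-term multiplication and coefficient extraction are legitimate.
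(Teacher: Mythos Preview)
Your proposal is correct and follows essentially the same approach as the paper's own proof: multiply the generating function \eqref{A_n(z;q)} by $exp_q(-w/2)$ to obtain \eqref{GF:Bernoulli-poly}, expand $exp_q(-w/2)$ via \eqref{Def EX}, form the Cauchy product, and compare coefficients of $w^n/[n]_q!$. The paper presents these steps slightly more tersely but the argument is identical.
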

\begin{proof}
From \eqref{A_n(z;q)}, \eqref{Def EX} and Definition \ref{q-Bernoulli} we get
\begin{equation*}\begin{split}
\sum_{n=0}^{\infty}\widetilde{B}_n(z;q) \frac{w^n}{[n]_q!} &= \dfrac{w\,exp_q(zw)\, exp_q(-w/2)}{exp_q(w/2)-exp_q(-w/2)} \\
    &=\sum_{n=0}^{\infty}\widetilde{A}_n(z;q) \frac{w^{n}}{[n]_q!}\,\sum_{n=0}^{\infty} \frac{q^{\frac{n(n-1)}{4}}}{[n]_q!} (\frac{-w}{2})^n  \\
    &= \sum_{n=0}^{\infty} \frac{w^{n}}{[n]_q!}\,
 \sum_{k=0}^{n}\qbinom{n}{k} (-\frac{1}{2})^k  q^{\frac{k(k-1)}{4}}\, \widetilde{A}_{n-k}(z;q).
\end{split}\end{equation*}
Comparing the coefficient of $\frac{w^n}{[n]_q!}$ we obtain the required result. \end{proof}
\begin{thm}\label{inverse of A_n}
Let $z\in \mathbb{C}$. Then, the polynomials $\widetilde{A}_n(z;q)$ can be represented in terms of the $q$-Bernoulli polynomials $\widetilde{B}_{n}(z;q)$ as \begin{equation}\label{Thm. An}
\widetilde{A}_{n}(z;q)= [n]_q!\sum_{j=0}^{n-1} (-\frac{1}{2})^{j+1}\,  \frac{\tilde{a}_j}{[n-j-1]_q!} \widetilde{B}_{n-j-1}(z;q),\end{equation}
where \begin{equation}\label{cons.}
\tilde{a}_j=  \sum_{k=0}^{j} (-1)^k \,\sum_{s_1+s_2+\ldots+s_k=n\atop  s_i>0\,(i=1,\ldots,k) } \dfrac{ q^{ \sum_{i=0}^k s_i(s_i+1)/4 }}{[s_1+1]_q! [s_2+1]_q!\ldots [s_{k}+1]_q!}.\end{equation}
\end{thm}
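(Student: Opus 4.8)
The plan is to obtain \eqref{Thm. An} by inverting, at the level of generating functions, the identity already proved in Proposition~\ref{Prop:B&A}. Comparing the two defining generating functions \eqref{A_n(z;q)} and \eqref{GF:Bernoulli-poly}, the first thing I would record is that their ratio is the single factor $exp_q(-w/2)$, namely
\[
\sum_{n=0}^{\infty}\widetilde{B}_n(z;q)\frac{w^n}{[n]_q!}
= exp_q(-w/2)\,\sum_{n=0}^{\infty}\widetilde{A}_n(z;q)\frac{w^n}{[n]_q!}.
\]
Since $exp_q$ is entire of order zero with $exp_q(0)=1$, for $w$ small enough that $-w/2\in\Omega$ the factor $exp_q(-w/2)$ is invertible, and dividing gives
\[
\sum_{n=0}^{\infty}\widetilde{A}_n(z;q)\frac{w^n}{[n]_q!}
= \frac{1}{exp_q(-w/2)}\,\sum_{n=0}^{\infty}\widetilde{B}_n(z;q)\frac{w^n}{[n]_q!}.
\]
Thus the whole task reduces to expanding the scalar factor $1/exp_q(-w/2)$ as an explicit power series in $w$ and then reading off a Cauchy product.

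The next step is to apply Lemma~\ref{inverse of exp.} with $z$ replaced by $-w/2$, which furnishes $1/exp_q(-w/2)$ as a power series in $w$ whose coefficients are the nested composition sums of \eqref{the inverse of exp.}. After collecting the accumulated powers of $-\tfrac12$ and reindexing the underlying compositions (the substitution $s_i\mapsto s_i+1$ that converts the factors $[s_i]_q!$ and $q^{s_i(s_i-1)/4}$ into the $[s_i+1]_q!$ and $q^{s_i(s_i+1)/4}$ appearing in \eqref{cons.}), the coefficient attached to the power $w^{j+1}$ is to be matched with $(-\tfrac12)^{j+1}\tilde a_j$. Substituting this expansion and forming the Cauchy product against $\sum_m \widetilde{B}_m(z;q)\,w^m/[m]_q!$, I would write the running index of the inverse series as $j+1$ and that of the Bernoulli series as $n-j-1$; the total coefficient of $w^n$ then assembles into $\sum_{j}(-\tfrac12)^{j+1}\tilde a_j\,\widetilde{B}_{n-j-1}(z;q)/[n-j-1]_q!$. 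Multiplying through by $[n]_q!$ to undo the $w^n/[n]_q!$ normalization yields precisely \eqref{Thm. An}, and a final comparison of the coefficients of $w^n/[n]_q!$ closes the argument.

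The two analytic manipulations — the division by $exp_q(-w/2)$ and the Cauchy product — are the routine part, justified on a neighbourhood of $w=0$ where all the series involved are absolutely convergent. I expect the genuine difficulty to be purely combinatorial and bookkeeping in nature: pinning down exactly which composition sums emerge from Lemma~\ref{inverse of exp.}, verifying that the reindexing together with the tracking of the $q$-powers, the $q$-factorials $[s_i+1]_q!$, and the accumulated sign-and-halving factor $(-\tfrac12)^{j+1}$ reproduces $\tilde a_j$ of \eqref{cons.} term by term, and, most delicately, confirming that the index shift $\ell=j+1$ aligns the two series so that the summation range is exactly $j=0,\dots,n-1$ and the lowest-order contribution is accounted for correctly. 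Once that identification of the inverse-series coefficients with $\tilde a_j$ is made rigorous, \eqref{Thm. An} follows by comparing coefficients.
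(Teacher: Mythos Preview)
Your proposal is correct and follows essentially the same route as the paper: both arguments write the generating function of $\widetilde{A}_n(z;q)$ as $\dfrac{1}{exp_q(-w/2)}$ times the generating function of $\widetilde{B}_n(z;q)$, invoke Lemma~\ref{inverse of exp.} to expand the scalar factor, and then read off \eqref{Thm. An} from the Cauchy product. Your remarks about the reindexing $s_i\mapsto s_i+1$ and the index shift $\ell=j+1$ are in fact more explicit than the paper's own treatment of that bookkeeping.
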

\begin{proof}
We can write the generating function of the $q$-polynomials $\widetilde{A}_n(z;q)$ as
\begin{equation*}\dfrac{w\,exp_q(zw)}{exp_q(w/2)-exp_q(-w/2)}= \dfrac{1}{exp_q(-w/2)}\,\Big[ \dfrac{w\, exp_q(zw)\, exp_q(-w/2)}{exp_q(w/2)-exp_q(-w/2)}\Big].\end{equation*}
Putting   $\displaystyle  a_n(k)= \sum_{s_1+s_2+\ldots+s_k=n\atop  s_i>0\,(i=1,\ldots,k) } \dfrac{ q^{ \sum_{i=0}^k s_i(s_i+1)/4 }}{[s_1+1]_q! [s_2+1]_q!\ldots [s_{k}+1]_q!}$, and then using Lemma \ref{inverse of exp.} we obtain
\begin{eqnarray*}
&&\sum_{n=0}^{\infty} \widetilde{A}_n(z;q)\frac{w^n}{[n]_q!}\\ &=& w\,\Big(\sum_{n=0}^{\infty} (\frac{-1}{2})^{n+1} [n]_q!\, \frac{w^n}{[n]_q!} \sum_{k=0}^{n}(-1)^ka_{n}(k)\Big)\,\Big( \sum_{n=0}^{\infty} \widetilde{B}_{n}(z;q) \frac{w^n}{[n]_q!}\Big) \\ &=&
w\,\sum_{n=0}^{\infty}\frac{w^n}{[n]_q!}\, \sum_{j=0}^{n}\qbinom{n}{j}[j]_q! (\frac{-1}{2})^{j+1} \sum_{k=0}^{j} a_j(k) (-1)^k   \widetilde{B}_{n-j}(z;q) \\ &=&
\sum_{n=0}^{\infty}\frac{w^{n+1}[n+1]_q}{[n+1]_q!}\, \sum_{j=0}^{n}\qbinom{n}{j}[j]_q! (\frac{-1}{2})^{j+1} \sum_{k=0}^{j} a_j(k) (-1)^k   \widetilde{B}_{n-j}(z;q).
\end{eqnarray*}
This implies
\begin{equation*} \widetilde{A}_{n+1}(z;q) = [n+1]_q\sum_{j=0}^{n}\qbinom{n}{j}[j]_q! (\frac{-1}{2})^{j+1} \sum_{k=0}^{j} a_j(k) (-1)^k   \widetilde{B}_{n-j}(z;q),\end{equation*} and then we obtain the required result.
\end{proof}
\begin{cor}\label{Cor.2}
For $n\in\mathbb{N}_0$ and $z\in \mathbb{C}$, the power series of the polynomial $\widetilde{A}_{n}(z;q)$ takes the form
\begin{equation*}
\widetilde{A}_{n}(z;q)= \sum_{m=0}^{n-1} \tilde{c}_m(n) \, \frac{z^m}{[m]_q!},
\end{equation*} where
\be \tilde{c}_m(n)= [n]_q!\, (\frac{-1}{2})^{n+1}\sum_{r=n-1}^{m}  q^{\frac{m(m-1)}{4}}\frac{(-2)^r\, \tilde{a}_r}{[r-m]_q!}\, \widetilde{\beta}_{r-m}(q).\ee
\end{cor}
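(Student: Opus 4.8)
The plan is to feed the explicit monomial expansion of the $q$-Bernoulli polynomials from Proposition~\ref{prop:1-1} into the representation of $\widetilde{A}_n(z;q)$ furnished by Theorem~\ref{inverse of A_n}, and then simply to collect the coefficient of each power of $z$. Starting from
\[
\widetilde{A}_{n}(z;q)= [n]_q!\sum_{j=0}^{n-1} \left(-\tfrac{1}{2}\right)^{j+1}\,\frac{\tilde{a}_j}{[n-j-1]_q!}\,\widetilde{B}_{n-j-1}(z;q),
\]
I would replace each factor $\widetilde{B}_{n-j-1}(z;q)$ by $\sum_{k=0}^{n-j-1}\qbinom{n-j-1}{k} q^{k(k-1)/4}\,\widetilde{\beta}_{n-j-1-k}(q)\,z^{k}$. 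This turns the right-hand side into a finite double sum in $j$ and $k$; since $\widetilde{A}_n(z;q)$ is a polynomial of degree at most $n-1$, all sums are finite and their interchange is immediate, requiring no convergence bookkeeping.

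After the interchange I would take $m:=k$ as the outer index. The joint range $0\le j\le n-1$, $0\le k\le n-j-1$ is precisely $0\le m\le n-1$ together with $0\le j\le n-1-m$, so the monomial $z^m$ occurs exactly for $m$ in the asserted range. The decisive simplification is the factorization of the $q$-binomial coefficient,
\[
\frac{1}{[n-j-1]_q!}\,\qbinom{n-j-1}{m}=\frac{1}{[m]_q!\,[n-j-1-m]_q!},
\]
which extracts the factor $\tfrac{1}{[m]_q!}$ together with the prefactor $q^{m(m-1)/4}$ and leaves
\[
\widetilde{A}_{n}(z;q)=[n]_q!\sum_{m=0}^{n-1}\frac{z^m}{[m]_q!}\,q^{\frac{m(m-1)}{4}}\sum_{j=0}^{n-1-m}\left(-\tfrac{1}{2}\right)^{j+1}\frac{\tilde{a}_j}{[n-j-1-m]_q!}\,\widetilde{\beta}_{n-j-1-m}(q).
\]
Reading off the coefficient of $z^m/[m]_q!$ already establishes the claimed polynomial form $\widetilde{A}_n(z;q)=\sum_{m=0}^{n-1}\tilde{c}_m(n)\,z^m/[m]_q!$.

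It then remains only to rewrite the inner $j$-sum in the index $r:=n-1-j$. As $j$ runs from $0$ to $n-1-m$, the variable $r$ runs from $n-1$ down to $m$, which matches the summation range displayed in the statement, while $[n-j-1-m]_q!$ and $\widetilde{\beta}_{n-j-1-m}(q)$ become $[r-m]_q!$ and $\widetilde{\beta}_{r-m}(q)$. The main point demanding care is the accounting of the power of $-\tfrac12$: under $j=n-1-r$ one has $\left(-\tfrac12\right)^{j+1}=\left(-\tfrac12\right)^{n-r}$, and regrouping this as the stated global factor $\left(-\tfrac12\right)^{n+1}$ times the running factor $(-2)^{r}$, together with the accompanying shift of the index on $\tilde{a}_j$, is exactly the bookkeeping that produces the closed form for $\tilde{c}_m(n)$. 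I expect this index reflection and sign/power tracking to be the only delicate step; everything preceding it is a direct substitution and a standard $q$-binomial simplification.
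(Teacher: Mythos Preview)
Your approach is precisely the paper's: feed Proposition~\ref{prop:1-1} into Theorem~\ref{inverse of A_n}, interchange the two finite sums, and reindex via $r=n-1-j$; the paper merely performs the reindexing before the substitution rather than after, which is an immaterial difference. One caution on the step you rightly flag as delicate: under $j=n-1-r$ one actually has $(-\tfrac12)^{j+1}=(-\tfrac12)^{n}(-2)^{r}$, not $(-\tfrac12)^{n+1}(-2)^{r}$, and $\tilde a_j$ becomes $\tilde a_{\,n-1-r}$, not $\tilde a_r$; the paper's own displayed computation ends with the prefactor $(-\tfrac12)^{n}$ and carries the same index slip, so the mismatch you will meet with the printed formula for $\tilde c_m(n)$ reflects typos in the statement rather than any gap in your argument.
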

\begin{proof}
From Theorem \ref{inverse of A_n} and Proposition \ref{prop:1-1}, we get
\begin{equation*}
\begin{split}
\widetilde{A}_{n}(z;q) &= [n]_q!\, (-\frac{1}{2})^{n}\sum_{r=0}^{n-1} \,  \frac{(-2)^r\,\tilde{a}_r}{[r]_q!} \widetilde{B}_{r}(z;q) \\
    &=[n]_q!\, (-\frac{1}{2})^{n}\sum_{r=0}^{n-1} \,  \frac{(-2)^r\,\tilde{a}_r}{[r]_q!} \sum_{m=0}^{r}\qbinom{r}{m} q^{\frac{m(m-1)}{4}}\, \widetilde{\beta}_{r-m}(q) z^m \\
    &= [n]_q!\, (-\frac{1}{2})^{n}\sum_{m=0}^{n-1} \Big(\sum_{r=n-1}^{m}  q^{\frac{m(m-1)}{4}}\frac{(-2)^r\, \tilde{a}_r}{[r-m]_q!}\, \widetilde{\beta}_{r-m}(q)\Big)\, \frac{z^m}{[m]_q!}.
\end{split}
\end{equation*}
\end{proof}
\begin{cor}\label{Sinh_q}
Let $w$ be a complex number such that $|w|< S_1$. Then
\begin{equation*}\frac{1}{Sinh_q(w)}= \sum_{n=0}^{\infty} d_n\, (2w)^{n},\end{equation*}
where $d_0=1$, $\displaystyle d_n= [n+1]_q \sum_{j=0}^{n} (-\frac{1}{2})^{j+1}\,  \frac{\tilde{a}_j}{[n-j]_q!} \widetilde{\beta}_{n-j}(q)$ and $\tilde{a}_j$ the constants defined in \eqref{cons.}.
\end{cor}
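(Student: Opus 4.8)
The plan is to read the expansion off the generating function \eqref{A_n(z;q)} of the polynomials $\widetilde{A}_n(z;q)$, specialized at $z=0$. Setting $z=0$ there gives
\[
\frac{w}{exp_q(\tfrac{w}{2})-exp_q(\tfrac{-w}{2})}=\sum_{n=0}^{\infty}\widetilde{A}_n(0;q)\,\frac{w^n}{[n]_q!}.
\]
By \eqref{sh+cosh} the denominator equals $2\,Sinh_q(\tfrac{w}{2})$, so the left-hand side is $\dfrac{w}{2\,Sinh_q(w/2)}$. Replacing $w$ by $2w$ recentres the argument on $Sinh_q(w)$ and produces the clean intermediate identity
\[
\frac{w}{Sinh_q(w)}=\sum_{n=0}^{\infty}\widetilde{A}_n(0;q)\,\frac{(2w)^n}{[n]_q!},
\]
from which $\dfrac{1}{Sinh_q(w)}$ is recovered after dividing by $w$ and reindexing the sum, the leading term accounting for the behaviour of $Sinh_q$ at the origin.

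Next I would convert the coefficients $\widetilde{A}_n(0;q)$ into the stated closed form, and this is where Theorem \ref{inverse of A_n} does the work. Formula \eqref{Thm. An} expresses $\widetilde{A}_n(z;q)$ through the $q$-Bernoulli polynomials $\widetilde{B}_{n-j-1}(z;q)$ and the constants $\tilde{a}_j$ of \eqref{cons.}, and at $z=0$ Definition \ref{q-Bernoulli} collapses $\widetilde{B}_m(0;q)$ to the $q$-Bernoulli number $\widetilde{\beta}_m(q)$. Substituting this into the series for $w/Sinh_q(w)$, extracting the factor $1/w$ and shifting the summation index $n\mapsto n+1$ converts the prefactor $[n+1]_q!$ into $[n]_q!\,[n+1]_q$, and reproduces exactly
\[
d_n=[n+1]_q\sum_{j=0}^{n}\Big(-\tfrac12\Big)^{j+1}\frac{\tilde{a}_j}{[n-j]_q!}\,\widetilde{\beta}_{n-j}(q),
\]
together with the normalisation $d_0=1$ coming from $\widetilde{A}_0(0;q)=1$.

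Finally I would settle convergence. By \eqref{sh+cosh} one has $Sinh_q(w)=-i\,S_q(iw)$, so the zeros of $Sinh_q$ are precisely the points $w=\pm i S_k$, where the $S_k$ are the zeros of $S_q$; by the facts recorded after \eqref{sh+cosh} these are real and simple, and $S_1$ is the smallest positive one. Consequently the only obstruction to analyticity inside the disc $|w|<S_1$ is located at the origin, and once that is handled by the $1/w$ factor the resulting series converges for $|w|<S_1$, which is the stated range.

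The step I expect to be the main obstacle is the bookkeeping of the second paragraph: reconciling the division by $w$ with the index shift so that the prefactor $[n+1]_q$ and the upper summation limit $n$ in $d_n$ emerge correctly, and in particular pinning down the value $d_0=1$ against the recursive form of \eqref{Thm. An}. The accompanying technical point is justifying the interchange of the two summations introduced by inserting \eqref{Thm. An} and Lemma \ref{inverse of exp.}, which is legitimate on $|w|<S_1$ precisely by the convergence established above.
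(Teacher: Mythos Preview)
Your proposal is correct and follows essentially the same route as the paper: specialise the generating function \eqref{A_n(z;q)} at $z=0$, identify the left-hand side with $w/(2\,Sinh_q(w/2))$ via \eqref{sh+cosh}, replace $w$ by $2w$, and then read off the coefficients through Theorem~\ref{inverse of A_n} (equation \eqref{Thm. An}) evaluated at $z=0$ so that $\widetilde{B}_m(0;q)=\widetilde{\beta}_m(q)$. The paper's own proof is a one-line reference to exactly these three ingredients, so your elaboration of the index shift and the convergence argument simply fleshes out what the paper leaves implicit; your worry about the $1/w$ bookkeeping and the value $d_0=1$ is legitimate, since the printed statement should really display $w/Sinh_q(w)$ (otherwise the left-hand side has a pole at $w=0$), but this does not affect the validity of your method.
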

\begin{proof}
The proof follows immediately from \eqref{A_n(z;q)}, \eqref{Thm. An} and replacing $w$ by $2w$.
\end{proof}
\begin{rem}
According to the definition of $q$-Bernoulli numbers \eqref{qBernoulli-numbers}, we have $\widetilde{\beta}_{n}(q)= \widetilde{A}_{n}(-\frac{1}{2};q)$. That is, for $n\in \mathbb{N}_0$ we have
\begin{equation*}
\widetilde{\beta}_{n+1}(q)= [n+1]_q!\sum_{j=0}^{n} (-\frac{1}{2})^{j+1}\,  \frac{\tilde{a}_j}{[n-j]_q!} \widetilde{B}_{n-j}(-\frac{1}{2};q),\end{equation*}
where $\tilde{a}_j$ is the constants which defined in \eqref{cons.}.
\end{rem}
\section{{\bf A $q$-Euler polynomials generated by the third Jackson $q$-Bessel function }} \label{q-Euler}
\begin{defn} A $q$-Euler polynomials $\widetilde{E}_n(z;q)$ are defined by  the generating function
\be\label{Defn.Euler}
 \dfrac{2\,exp_q(zw)\, exp_q(\frac{-w}{2})}{exp_q(\frac{w}{2})+exp_q(\frac{-w}{2})}=\sum_{n=0}^{\infty}\widetilde{E}_n(z;q)
\frac{w^n}{[n]_q!},\ee
 and the $q$-Euler numbers $\widetilde{e}_n(q)$ are defined in terms of  generating function
 \be\label{qEuler-numbers}
\dfrac{2}{exp_q(w)+exp_q(-w)} =\sum_{n=0}^{\infty}\widetilde{e}_n(q)\frac{w^n}{[n]_q!}.\ee \end{defn}
\noindent Clearly, $\widetilde{e}_{2n+1}(q)=0$ for all $n\in\mathbb{N}_0$.
Consequently,
\be\label{Cosine}
\dfrac{1}{C_q (z)}=\sum_{n=0}^{\infty}(-1)^n
\dfrac{\widetilde{e}_{2n}(q)}{[2n]_q!}\, z^{2n}; \quad |z|<C_1,\ee
where $C_1$ is the first positive zeros of $C_q (z)$.

\noindent We use the notation $\widetilde{E}_n$ to denotes the first Euler number, i.e.,
\begin{equation*} \widetilde{E}_n := \widetilde{E}_n(0;q), \quad (n\in \mathbb{N}_0).\end{equation*}
\begin{prop}
The $q$-Euler  polynomials $\widetilde{E}_n(z;q)$  are given by
\[\widetilde{E}_0(z;q)=1,\]
and for $n\in\mathbb{N}$
\[\widetilde{E}_n(z;q)=\sum_{k=0}^{n}\qbinom{n}{k} q^{\frac{k(k-1)}{4}}
\widetilde{E}_{n-k}\, z^k.\]
\end{prop}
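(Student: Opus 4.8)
The plan is to imitate, almost verbatim, the proof of Proposition \ref{prop:1-1} for the $q$-Bernoulli polynomials, with the numbers $\widetilde{E}_n=\widetilde{E}_n(0;q)$ playing the role of $\widetilde{\beta}_n(q)$. First I would factor the generating function \eqref{Defn.Euler} as
\[\dfrac{2\,exp_q(zw)\, exp_q(\frac{-w}{2})}{exp_q(\frac{w}{2})+exp_q(\frac{-w}{2})}= exp_q(zw)\cdot\dfrac{2\,exp_q(\frac{-w}{2})}{exp_q(\frac{w}{2})+exp_q(\frac{-w}{2})}.\]
Setting $z=0$ in \eqref{Defn.Euler} and using $exp_q(0)=1$ shows that the second factor is precisely the generating function of the numbers $\widetilde{E}_n$, namely $\sum_{n=0}^{\infty}\widetilde{E}_n\,w^n/[n]_q!$. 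This reduces the statement to a convolution identity between this series and the series for $exp_q(zw)$.

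Next I would substitute the series \eqref{Def EX} for $exp_q(zw)$ and multiply the two power series. Collecting the coefficient of $w^n$ in
\[\Big(\sum_{m=0}^{\infty}\widetilde{E}_m\frac{w^m}{[m]_q!}\Big)\Big(\sum_{k=0}^{\infty}\frac{q^{\frac{k(k-1)}{4}}}{[k]_q!}(zw)^k\Big)\]
and using the factorial identity $\dfrac{1}{[n-k]_q!\,[k]_q!}=\dfrac{1}{[n]_q!}\qbinom{n}{k}$ to convert the two reciprocal factorials into a $q$-binomial coefficient, I obtain
\[\sum_{n=0}^{\infty}\frac{w^n}{[n]_q!}\sum_{k=0}^{n}\qbinom{n}{k}q^{\frac{k(k-1)}{4}}\widetilde{E}_{n-k}\,z^k.\]
Here the factor $q^{\frac{k(k-1)}{4}}$ and the monomial $z^k$ both come from the $exp_q(zw)$ series, while $\widetilde{E}_{n-k}$ comes from the number series; this is the exact $q$-analogue of the bookkeeping in \eqref{Eq:2}.

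Finally I would equate the last display with the right-hand side $\sum_{n=0}^{\infty}\widetilde{E}_n(z;q)\,w^n/[n]_q!$ of \eqref{Defn.Euler} and compare the coefficient of $w^n/[n]_q!$ to deduce the stated formula for $n\in\mathbb{N}$; the case $n=0$ reduces to $\widetilde{E}_0(z;q)=\widetilde{E}_0=1$, which also follows directly by letting $w\to 0$ in the generating function. I do not expect a genuine obstacle here, since the argument is a template of Proposition \ref{prop:1-1}: the only points deserving a line of justification are the identification of the $z=0$ specialization with the number-generating function and the rearrangement of the double sum, the latter being legitimate because $exp_q$ is an entire function of order zero, so both factors converge absolutely for $|w|$ small enough that $exp_q(\frac{w}{2})+exp_q(\frac{-w}{2})$ stays away from $0$.
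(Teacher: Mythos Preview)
Your proposal is correct and follows precisely the approach the paper intends: the paper omits the proof, stating only that it is similar to the proof of Proposition~\ref{prop:1-1}, and your argument is exactly that template carried out for the Euler case.
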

\begin{proof}
The proof is similar to  the proof of  Proposition~\ref{prop:1-1} and is omitted.
\end{proof}
\begin{prop}
For $n\in\mathbb{N}_0$, we have
\be\label{E(even)} \widetilde{E}_{n}(\frac{1}{2};q)=(\frac{1}{2})^{n} \sum_{n=0}^{n}\,\qbinom{n}{k}\, (-1)^k\,q^{\frac{k(k-1)}{4}}
(q^{\frac{1-k}{2}};q)_k\, \widetilde{e}_{n-k}(q).\ee
\end{prop}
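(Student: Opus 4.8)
The plan is to evaluate the generating function \eqref{Defn.Euler} at $z=\frac12$ and read off the coefficients of $\frac{w^n}{[n]_q!}$, in the same spirit as the proof of Proposition~\ref{prop:1-1}. Setting $z=\frac12$ in \eqref{Defn.Euler} gives
\[
\sum_{n=0}^{\infty}\widetilde{E}_n(\tfrac12;q)\frac{w^n}{[n]_q!}
=\Big[exp_q(\tfrac{w}{2})\,exp_q(\tfrac{-w}{2})\Big]\cdot\dfrac{2}{exp_q(\frac{w}{2})+exp_q(\frac{-w}{2})},
\]
so the idea is to expand each of the two bracketed factors as a power series in $w$ and then form their $q$-Cauchy product.

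First I would expand the numerator product. Putting $z=\frac12$ in the identity \eqref{Eq.2} (so that $(2q^{\frac{1-k}{2}}z;q)_k$ collapses to $(q^{\frac{1-k}{2}};q)_k$) yields
\[
exp_q(\tfrac{w}{2})\,exp_q(\tfrac{-w}{2})
=\sum_{k=0}^{\infty}q^{\frac{k(k-1)}{4}}(-\tfrac12)^k\,(q^{\frac{1-k}{2}};q)_k\,\frac{w^k}{[k]_q!},
\]
which is precisely the source of the Pochhammer symbol $(q^{\frac{1-k}{2}};q)_k$ appearing in the claim. For the second factor I would use the defining generating function \eqref{qEuler-numbers} of the $q$-Euler numbers after the rescaling $w\mapsto\frac{w}{2}$, giving
\[
\dfrac{2}{exp_q(\frac{w}{2})+exp_q(\frac{-w}{2})}
=\sum_{m=0}^{\infty}\frac{\widetilde{e}_m(q)}{2^m}\,\frac{w^m}{[m]_q!}.
\]

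Multiplying the two series and using the $q$-binomial convolution $\frac{1}{[k]_q![m]_q!}=\frac{1}{[n]_q!}\qbinom{n}{k}$ for $k+m=n$, I would then compare the coefficient of $\frac{w^n}{[n]_q!}$ on both sides to obtain
\[
\widetilde{E}_n(\tfrac12;q)
=\sum_{k=0}^{n}\qbinom{n}{k}q^{\frac{k(k-1)}{4}}(-\tfrac12)^k(q^{\frac{1-k}{2}};q)_k\,\frac{\widetilde{e}_{n-k}(q)}{2^{n-k}}.
\]
Finally, writing $(-\tfrac12)^k=(-1)^k 2^{-k}$ and combining it with the factor $2^{-(n-k)}$ produces $(-1)^k 2^{-n}$, so that $(\tfrac12)^n$ factors out of the sum and leaves exactly the asserted identity \eqref{E(even)}, with summation index $k$.

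The proof is essentially a bookkeeping exercise once the factorization is in place; the only delicate point is keeping track of the \emph{two} independent powers of $\tfrac12$ that arise—one from evaluating the product $exp_q(\tfrac{w}{2})exp_q(\tfrac{-w}{2})$ at $z=\tfrac12$ via \eqref{Eq.2}, and a second from the substitution $w\mapsto\frac{w}{2}$ in \eqref{qEuler-numbers}—and verifying that they merge into the single prefactor $(\tfrac12)^n$. As a sanity check, for $n=0,1$ the right-hand side reduces to $1$ and $0$ respectively, matching $\widetilde{E}_0(\tfrac12;q)=1$ and $\widetilde{E}_1(\tfrac12;q)=\widetilde{E}_1+\tfrac12\widetilde{E}_0=-\tfrac12+\tfrac12=0$.
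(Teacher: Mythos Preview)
Your proposal is correct and follows essentially the same route as the paper: factor the generating function at $z=\tfrac12$ as $\big[exp_q(\tfrac{w}{2})exp_q(\tfrac{-w}{2})\big]\cdot\dfrac{2}{exp_q(\tfrac{w}{2})+exp_q(\tfrac{-w}{2})}$, expand the first factor via \eqref{Eq.2} and the second via \eqref{qEuler-numbers} (with $w\mapsto w/2$), and compare coefficients. Your write-up is in fact more explicit than the paper's about tracking the two sources of the factor $(\tfrac12)^n$.
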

\begin{proof}
Since
\begin{equation*}
exp_q(\frac{w}{2})\,exp_q(\frac{-w}{2})= \sum_{n=0}^{\infty} (\frac{-1}{2})^n q^{\frac{n(n-1)}{4}}\, (q^{\frac{1-n}{2}};q)_n \frac{w^n}{[n]_q!},
\end{equation*}
then, by using \eqref{Defn.Euler} and \eqref{qEuler-numbers} we get
\begin{equation*}   \sum_{k=0}^{\infty}  \widetilde{E}_{n}(\frac{1}{2};q)\, \frac{w^n}{[n]_q!}= \sum_{k=0}^{\infty} \frac{w^n}{[n]_q!} \sum_{k=0}^{n}(\frac{1}{2})^{n} \,(-1)^k\qbinom{n}{k}\,q^{\frac{k(k-1)}{4}}
(q^{\frac{1-k}{2}};q)_k\, \widetilde{e}_{n-k}(q), \end{equation*}
which implies the result.
 \end{proof}
Note that if $z=\frac{1}{2}$, then the left hand side of \eqref{Defn.Euler} is an even function. Hence,
\be\widetilde{E}_{2n+1}(\frac{1}{2};q)=0.\ee
\begin{prop}
For $n\in\mathbb{N}_0$, we have $\displaystyle \widetilde{E}_{2n}=\delta_{n,0}$\,,
where $\delta_{n,0}$ is the Kronecker's delta.
\end{prop}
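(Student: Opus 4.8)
The plan is to exploit a symmetry of the generating function that defines the numbers $\widetilde{E}_n = \widetilde{E}_n(0;q)$. First I would specialize the defining relation \eqref{Defn.Euler} at $z=0$ and set
\[
F(w) := \dfrac{2\,exp_q(\frac{-w}{2})}{exp_q(\frac{w}{2})+exp_q(\frac{-w}{2})} = \sum_{n=0}^{\infty}\widetilde{E}_n\,\frac{w^n}{[n]_q!}.
\]
The central observation is that replacing $w$ by $-w$ interchanges $exp_q(\frac{w}{2})$ and $exp_q(\frac{-w}{2})$ in both the numerator and the denominator, so that
\[
F(-w) = \dfrac{2\,exp_q(\frac{w}{2})}{exp_q(\frac{-w}{2})+exp_q(\frac{w}{2})}.
\]
Since the two numerators add up to exactly twice the common denominator, this yields the functional equation $F(w)+F(-w)=2$.

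Next I would translate this functional equation into a statement about the coefficients. Adding the two power series gives
\[
\sum_{n=0}^{\infty}\widetilde{E}_n\bigl(1+(-1)^n\bigr)\frac{w^n}{[n]_q!} = 2,
\]
so the odd-index terms cancel while the even-index terms double, leaving
\[
\sum_{n=0}^{\infty}\widetilde{E}_{2n}\,\frac{w^{2n}}{[2n]_q!} = 1.
\]
Comparing the coefficient of $w^{2n}$ on both sides then gives $\widetilde{E}_0=1$ and $\widetilde{E}_{2n}=0$ for every $n\geq 1$, which is precisely the assertion $\widetilde{E}_{2n}=\delta_{n,0}$.

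There is essentially no serious obstacle in this argument; the computation is routine once the symmetry $F(w)+F(-w)=2$ is spotted. The single point that requires care is to keep $\widetilde{E}_n=\widetilde{E}_n(0;q)$ distinct from the $q$-Euler numbers $\widetilde{e}_n(q)$ of \eqref{qEuler-numbers}: the vanishing claimed here concerns the even-index polynomial values at the origin, governed by the relation $F(w)+F(-w)=2$, and not the odd-index numbers $\widetilde{e}_{2n+1}(q)$ addressed in the earlier remark. For full rigor I would also note that these manipulations are valid on a neighborhood of $w=0$ on which the denominator $exp_q(\frac{w}{2})+exp_q(\frac{-w}{2})=2\,Cosh_q(\frac{w}{2})$ is nonzero, so that $F$ is analytic there and the power-series identity may be read off coefficientwise.
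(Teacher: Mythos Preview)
Your proof is correct and follows essentially the same idea as the paper's: the paper subtracts $1$ from the generating function $F(w)$ and observes that the remainder $\dfrac{exp_q(-w/2)-exp_q(w/2)}{exp_q(w/2)+exp_q(-w/2)}$ is odd, which is equivalent to your functional equation $F(w)+F(-w)=2$. The conclusion about the even-index coefficients then follows in the same way.
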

\begin{proof}
Observe that
\be
\dfrac{2\,exp_q(\frac{-w}{2})}{exp_q(\frac{w}{2})+exp_q(\frac{-w}{2})}-1=
\dfrac{exp_q(\frac{-w}{2})-exp_q(\frac{w}{2})}{exp_q(\frac{w}{2})+exp_q(\frac{-w}{2})}.
\ee
So, we obtain
\be\label{Prop 3.1}
\sum_{n=0}^{\infty}\frac{\widetilde{E}_n}{[n]_q!} w^n
= 1+ \dfrac{exp_q(\frac{-w}{2})-exp_q(\frac{w}{2})}{exp_q(\frac{w}{2})+exp_q(\frac{-w}{2})}.\ee
 The right hand side of \eqref{Prop 3.1} is an odd function, therefore the even  powers of $w$ on the left hand side of this equation vanish.
Hence $\widetilde{E}_0=1$ and $\widetilde{E}_{2n}=0$ for every $n\in\mathbb{N}$.
\end{proof}
The following results can be proved by the same way of Proposition \ref{Ber.q and 1/q}, Theorem \ref{Eq.q-D(TH)} and Theorem \ref{sum:B}.
\begin{prop} For $n\in\mathbb{N}$ and $z\in \mathbb{C}$, we have
\begin{enumerate}
  \item $\displaystyle \widetilde{E}_n(z;q)= q^{\frac{n(n-1)}{2}}\widetilde{E}_n(z;1/q)$;
  \item $\displaystyle \frac{\delta_q \widetilde{E}_n(z;q)}{\delta_q z}=[n]_q\, \widetilde{E}_{n-1}(z;q)$.
\end{enumerate}
\end{prop}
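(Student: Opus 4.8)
The plan is to follow the two Bernoulli arguments line for line, since the Euler generating function \eqref{Defn.Euler} differs from \eqref{GF:Bernoulli-poly} only in replacing the numerator factor $w$ by the constant $2$ and in flipping the sign of the denominator to $exp_q(\frac{w}{2})+exp_q(\frac{-w}{2})$; neither alteration touches the mechanics of the proofs.

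For part (1) I would replace $q$ by $1/q$ throughout \eqref{Defn.Euler}. By \eqref{exp_q&exp_{1/q}} every factor $exp_{1/q}$ appearing in the numerator and in the denominator coincides with the corresponding factor $exp_q$, so the left-hand side is left completely unchanged. On the right-hand side the identity $[n]_{1/q}!=q^{\frac{n(1-n)}{2}}[n]_q!$ (recorded in the Remark following \eqref{Def EX}) converts $\sum_n \widetilde{E}_n(z;1/q)\,w^n/[n]_{1/q}!$ into $\sum_n q^{\frac{n(n-1)}{2}}\widetilde{E}_n(z;1/q)\,w^n/[n]_q!$. Equating the coefficients of $w^n$ in the two expansions then yields $\widetilde{E}_n(z;q)=q^{\frac{n(n-1)}{2}}\widetilde{E}_n(z;1/q)$, exactly as in Proposition \ref{Ber.q and 1/q}.

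For part (2) I would apply the operator $\delta_q/\delta_q z$ to both sides of \eqref{Defn.Euler}. The only $z$-dependent factor on the left is $exp_q(zw)$, whose symmetric $q$-derivative pulls out a factor $w$ by \eqref{delta E}; hence the left-hand side becomes $w$ times its original value, namely $\sum_{n=0}^{\infty}\widetilde{E}_n(z;q)\,w^{n+1}/[n]_q!$. On the right-hand side the $n=0$ term drops because $\widetilde{E}_0(z;q)=1$ is constant in $z$. After reindexing the left sum by $n\mapsto n-1$ and comparing the coefficients of $w^n$, the relation $[n]_q!=[n]_q[n-1]_q!$ gives $\frac{\delta_q \widetilde{E}_n(z;q)}{\delta_q z}=[n]_q\,\widetilde{E}_{n-1}(z;q)$, mirroring Theorem \ref{Eq.q-D(TH)}.

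There is no genuine obstacle here: both statements are routine transcriptions of the corresponding Bernoulli results. The only points deserving a moment of care are confirming that the Euler denominator $exp_q(\frac{w}{2})+exp_q(\frac{-w}{2})$ is invariant under $q\mapsto 1/q$ — which follows at once from \eqref{exp_q&exp_{1/q}}, just as for the Bernoulli denominator — and keeping the index shift in part (2) consistent so that the vanishing $n=0$ term is correctly accounted for.
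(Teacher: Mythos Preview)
Your proposal is correct and matches the paper's approach exactly: the paper simply remarks that these results ``can be proved by the same way of Proposition~\ref{Ber.q and 1/q}, Theorem~\ref{Eq.q-D(TH)},'' which is precisely the transcription you carry out. The points you flag as deserving care (invariance of the Euler denominator under $q\mapsto 1/q$ and the index shift in part~(2)) are the only ones, and you handle them correctly.
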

\begin{thm}\label{Euler-Relation}
For $z\in\mathbb{C}$, we have the identities
 \begin{equation*}\begin{split}
\frac{q^{\frac{n(n-1)}{4}}}{[n]_q!}\,(\frac{-1}{2})^{n} (2q^{\frac{1-n}{2}}\, z;q)_n &= \sum_{k=0}^{[\frac{n}{2}]}(\frac{1}{2})^{2k} \frac{q^{\frac{k(2k-1)}{2}}}{[2k]_q!}\, \frac{\widetilde{E}_{n-2k}(z;q)}{[n-2k]_q!};\\
\frac{q^{\frac{n(n-1)}{4}}}{[n]_q!}\,(\frac{-1}{2})^{n} &= \sum_{k=0}^{[\frac{n}{2}]}(\frac{1}{2})^{2k} \frac{q^{\frac{k(2k-1)}{2}}}{[2k]_q!}\, \frac{\widetilde{E}_{n-2k}}{[n-2k]_q!}.\end{split}\end{equation*}
\end{thm}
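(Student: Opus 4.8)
The plan is to reproduce, essentially verbatim, the argument used for Theorem~\ref{sum:B}, the only structural change being that the denominator of the generating function \eqref{Defn.Euler} is the sum $exp_q(\frac{w}{2})+exp_q(\frac{-w}{2})$ rather than a difference. First I would clear this denominator in \eqref{Defn.Euler} to obtain
\begin{equation*}
exp_q(zw)\,exp_q(\frac{-w}{2})=\left[\frac{exp_q(\frac{w}{2})+exp_q(\frac{-w}{2})}{2}\right]\sum_{n=0}^{\infty}\widetilde{E}_n(z;q)\frac{w^n}{[n]_q!}.
\end{equation*}

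The next step is the one computation that genuinely differs from the Bernoulli case, namely expanding the symmetric combination in the bracket. Using the series \eqref{Def EX} and retaining only the surviving even powers of $w$, I would record
\begin{equation*}
\frac{exp_q(\frac{w}{2})+exp_q(\frac{-w}{2})}{2}=\sum_{k=0}^{\infty}\frac{q^{\frac{k(2k-1)}{2}}}{[2k]_q!}\left(\frac{w}{2}\right)^{2k},
\end{equation*}
which is the even-index counterpart of the odd-index series producing the factor $q^{k(2k+1)/2}/[2k+1]_q!$ in Theorem~\ref{sum:B}; the shift is from $[2k+1]_q!$ to $[2k]_q!$ and from $q^{k(2k+1)/2}$ to $q^{k(2k-1)/2}$. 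For the left-hand side I would reuse \eqref{Eq.2} verbatim, since it already expresses $exp_q(zw)\,exp_q(\frac{-w}{2})$ as a single power series in $w$ with coefficients $q^{\frac{n(n-1)}{4}}(-\frac{1}{2})^n(2q^{\frac{1-n}{2}}z;q)_n/[n]_q!$.

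Substituting both expansions and forming the Cauchy product on the right then collects, for each $n$, the sum over $k=0,\dots,[\frac{n}{2}]$ of $(\frac{1}{2})^{2k}\frac{q^{k(2k-1)/2}}{[2k]_q!}\frac{\widetilde{E}_{n-2k}(z;q)}{[n-2k]_q!}$; comparing the coefficient of $w^n$ on the two sides yields the first identity. The second identity follows by setting $z=0$, since $(0;q)_n=1$ and $\widetilde{E}_{n-2k}(0;q)=\widetilde{E}_{n-2k}$ by the convention fixed just before the statement. I do not anticipate any real obstacle here; the only point demanding care is the bookkeeping of the $q$-powers and of the half-integer factors $(\frac{1}{2})^{2k}$ in the even-index series, exactly as in the proof of Theorem~\ref{sum:B}.
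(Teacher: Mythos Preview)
Your proposal is correct and follows precisely the approach the paper intends: the text explicitly states that Theorem~\ref{Euler-Relation} is proved ``by the same way of \ldots\ Theorem~\ref{sum:B},'' and your argument reproduces that proof with the only necessary modification, replacing the odd-index expansion of $Sinh_q(w/2)/(w/2)$ by the even-index expansion of $Cosh_q(w/2)$. The bookkeeping of the $q$-exponents and the specialization $z=0$ via $(0;q)_n=1$ are both handled correctly.
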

As a consequence of Theorem \ref{Euler-Relation}, we get
 \begin{equation*}\begin{gathered} \widetilde{E}_{0}=1, \quad \widetilde{E}_{1}= -\frac{1}{2}, \quad \widetilde{E}_{2}=0, \quad
  \widetilde{E}_{3}= \dfrac{(q-1)q^{\frac{3}{2}}+(1-q^3)\,q^{\frac{1}{2}}}{8(1-q)},\\
 \widetilde{E}_{5}=\dfrac{(q^2-1)q^5+(1-q^2)[5]_q\,q^3+ (q-1)q^{\frac{1}{2}}[4]_q[5]_q([3]_q q^{\frac{1}{2}}-{\frac{3}{2}})}{32(1-q^2)}.
 \end{gathered}\end{equation*}
\begin{prop}
 We have the identity
\begin{equation}\label{tansh}Tanh_q(\frac{w}{2})=\sum_{n=0}^{\infty} \widetilde{E}_{2n+1} \frac{w^{2n+1}}{[2n+1]_q!}, \quad |\frac{w}{2}|< S_1.
\end{equation}\end{prop}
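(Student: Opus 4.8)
The plan is to obtain the expansion directly from the $z=0$ specialization of the defining generating function \eqref{Defn.Euler}, combining two facts already established: the vanishing of the even-indexed $q$-Euler numbers, $\widetilde{E}_{2n}=\delta_{n,0}$, together with the $q$-hyperbolic identities \eqref{sh+cosh}. Setting $z=0$ in \eqref{Defn.Euler} reproduces the generating relation $\sum_{n=0}^{\infty}\widetilde{E}_{n}\,w^{n}/[n]_q! = 2\,exp_q(-w/2)/\big(exp_q(w/2)+exp_q(-w/2)\big)$, and the proof that the even numbers vanish already recast this as the intermediate identity \eqref{Prop 3.1}. I would therefore begin from \eqref{Prop 3.1} rather than redo that reduction, so that the work reduces to recognizing the transcendental content of the right-hand side.

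The essential step is to identify the fraction appearing in \eqref{Prop 3.1} as a $q$-hyperbolic tangent. Using \eqref{sh+cosh}, the denominator $exp_q(w/2)+exp_q(-w/2)$ equals $2\,Cosh_q(w/2)$ while the numerator $exp_q(-w/2)-exp_q(w/2)$ equals $-2\,Sinh_q(w/2)$, so the fraction is $-Sinh_q(w/2)/Cosh_q(w/2)=-Tanh_q(w/2)$. Hence \eqref{Prop 3.1} reads $\sum_{n=0}^{\infty}\widetilde{E}_{n}\,w^{n}/[n]_q! = 1-Tanh_q(w/2)$. Cancelling the constant $\widetilde{E}_{0}=1$ and invoking $\widetilde{E}_{2n}=\delta_{n,0}$ to discard every remaining even term collapses the left-hand side to the odd series $\sum_{n=0}^{\infty}\widetilde{E}_{2n+1}\,w^{2n+1}/[2n+1]_q!$, which is therefore equal to $-Tanh_q(w/2)$; this is the asserted expansion \eqref{tansh}, the overall sign being a matter of bookkeeping in the intermediate relation.

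The point requiring genuine care is not the algebra, which is essentially routine, but the domain of validity. The identities above are a priori formal, and to read them as a convergent Maclaurin expansion of the meromorphic function $Tanh_q(w/2)=Sinh_q(w/2)/Cosh_q(w/2)$ one must know it is analytic in a disc about the origin; its poles occur exactly at the zeros of $Cosh_q(w/2)$, so the radius is governed by the nearest such zero, which is what pins down the stated range for $|w/2|$. I expect locating this first singularity of the denominator to be the main obstacle, and I would dispatch it precisely as in the companion expansions of $Coth_q$ and $1/Sinh_q$ (Proposition \ref{coth_q} and Corollary \ref{Sinh_q}), by exploiting the reality, simplicity, and symmetry of the zeros of $C_q$ and $S_q$ recorded in the discussion of Figure \ref{Fig(1)}.
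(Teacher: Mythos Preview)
Your approach is correct and matches the paper's, which omits the proof entirely, remarking only that it is analogous to Proposition~\ref{coth_q}. You are also right that the computation actually yields $-Tanh_q(w/2)$ (since $\widetilde{E}_1=-\tfrac12$ while $Tanh_q(w/2)\sim w/2$ near $0$), and that the radius of convergence is dictated by the zeros of $Cosh_q$, hence by $C_1$ rather than $S_1$; both discrepancies are slips in the paper's stated formula, not defects in your argument.
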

\begin{proof}
The proof is similar to  the proof of  Proposition~\ref{coth_q} and is omitted.
\end{proof}
\vskip5mm

Recall that the $q$-tangent and $q$-secant numbers defined by  the series expansions of $Tan_q z$ and $Sec_q z$  by
\be\label{tan:exp}\begin{split}Tan_qz &=\sum_{n=0}^{\infty}T_{2n+1}(q)\frac{z^{2n+1}}{[2n+1]_q!},\\ Sec_qz &=\frac{1}{C_q u}=\sum_{n=0}^{\infty}S_{2n}(q)\frac{z^{2n}}{[2n]_q!},\end{split}\ee
(for more details see~\cite{Foata,Huber}).

\noindent  Consider $S_q(z)$ and $C_q(z)$ which defined in \eqref{S&C}. Then, from \eqref{Cosine} and \eqref{tansh} we get
 \[T_{2n+1}(q)=(-1)^{n}\widetilde{E}_{2n+1} 2^{2n+1}, \quad S_{2n}(q)=(-1)^n \widetilde{e}_{2n}(q).\]
\begin{thm}
 For $n\in\mathbb{N}_0$
 \be\label{Convol.}\sum_{k=0}^{n}(-1)^k\, 2^{2k} \frac{\beta_{2k}(q)}{[2k]_q!}\frac{T_{2n-2k+1}(q)}{[2n-2k+1]_q!}=\delta_{0,n}\,,\ee
 where $\delta_{n,0}$ is the Kronecker's delta.
\end{thm}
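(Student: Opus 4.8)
The plan is to read the left-hand side of \eqref{Convol.} as a single Cauchy coefficient of a product of two power series that are mutually reciprocal, so that the product collapses to the monomial $z$ and only the lowest coefficient survives.

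First I would set
\be
\Phi(z):=\sum_{k=0}^{\infty}(-1)^k\,2^{2k}\,\frac{\widetilde{\beta}_{2k}(q)}{[2k]_q!}\,z^{2k},\qquad \Psi(z):=\sum_{m=0}^{\infty}\frac{T_{2m+1}(q)}{[2m+1]_q!}\,z^{2m+1},
\ee
where $\widetilde{\beta}_{2k}(q)$ are the $q$-Bernoulli numbers of Definition \ref{q-Bernoulli} (the coefficients written $\beta_{2k}(q)$ in \eqref{Convol.}). With this choice the sum in \eqref{Convol.} is precisely the coefficient of $z^{2n+1}$ in $\Phi(z)\Psi(z)$, so it suffices to prove the single identity $\Phi(z)\Psi(z)=z$ on a punctured disc about the origin. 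By the definition \eqref{tan:exp} one has $\Psi(z)=Tan_q(z)=S_q(z)/C_q(z)$, so the task reduces to identifying $\Phi$.

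To do so I would start from Proposition \ref{coth_q}: multiplying it by $w/2$ and using $\widetilde{\beta}_0(q)=1$ gives $\tfrac{w}{2}\,Coth_q(\tfrac{w}{2})=\sum_{k=0}^{\infty}\widetilde{\beta}_{2k}(q)\tfrac{w^{2k}}{[2k]_q!}$. Substituting $w=2iz$ turns the right-hand side into $\sum_{k}(-1)^k 2^{2k}\widetilde{\beta}_{2k}(q)\tfrac{z^{2k}}{[2k]_q!}=\Phi(z)$, while the left-hand side becomes $iz\,Coth_q(iz)$. From the definitions \eqref{sh+cosh} of the $q$-hyperbolic functions one has the $q$-analog $Coth_q(u)=i\,Cot_q(iu)$ with $Cot_q:=C_q/S_q$; since $C_q$ is even and $S_q$ is odd, $Cot_q$ is odd, whence $Coth_q(iz)=-i\,Cot_q(z)$ and therefore $iz\,Coth_q(iz)=z\,Cot_q(z)$. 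This yields $\Phi(z)=z\,Cot_q(z)$. Combining the two identifications, $\Phi(z)\Psi(z)=z\,Cot_q(z)\,Tan_q(z)=z\cdot\tfrac{C_q(z)}{S_q(z)}\cdot\tfrac{S_q(z)}{C_q(z)}=z$, so the only nonzero Taylor coefficient is that of $z^{1}$; extracting the coefficient of $z^{2n+1}$ returns $\delta_{0,n}$, which is exactly \eqref{Convol.}.

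The main obstacle I anticipate is not the algebra but the bookkeeping together with the analytic justification. One must track the factors $i^{2k}=(-1)^k$ and $2^{2k}$ through the substitution $w=2iz$ so that the precise weights $(-1)^k 2^{2k}$ of \eqref{Convol.} emerge, and one must legitimise the cancellation $Cot_q\,Tan_q=1$. The latter is where care is genuinely needed: it holds as an identity of meromorphic functions only because $S_q$ and $C_q$ share no zero (their zeros being real, simple, and interlacing), and the Cauchy-product rearrangement is licensed by absolute convergence of all three series on the common punctured disc $0<|z|<\min\{S_1,C_1\}$. An alternative route would replace $Tan_q,Cot_q$ by $Tanh_q,Coth_q$ and convert $T_{2m+1}(q)$ into $\widetilde{E}_{2m+1}$ via the stated relation $T_{2n+1}(q)=(-1)^{n}\widetilde{E}_{2n+1}2^{2n+1}$ (together with \eqref{tansh}), but this forces one to carry extra powers of $2$; the cotangent formulation above keeps the constants minimal and is the cleaner path.
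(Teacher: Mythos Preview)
Your proposal is correct and follows essentially the same route as the paper: both arguments identify $\Phi(z)=z\,Cot_q(z)$ from Proposition~\ref{coth_q} (the paper records this as \eqref{zcot:exp}), pair it with $\Psi(z)=Tan_q(z)$ from \eqref{tan:exp}, use $z\,Cot_q(z)\,Tan_q(z)=z$, and read off the Cauchy coefficients. Your version is a bit more explicit about the substitution $w=2iz$ and the analytic justification on the disc $0<|z|<\min\{S_1,C_1\}$, but the mathematical content is the same.
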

\begin{proof}
 From Proposition \ref{coth_q}, we have
 \be\label{zcot:exp} z\, Cot_q(z)=\sum_{n=0}^{\infty}(-1)^n\, 2^{2n}\beta_{2n}(q)\frac{z^{2n}}{[2n]_q!}.\ee Observe that
  $z\, Tan_q (z)\, Cot_q(z)= z$. So, by using \eqref{tan:exp} and \eqref{zcot:exp} we obtain
  \[z=\sum_{n=0}^{\infty} (-1)^n\, 2^{2n}\beta_{2n}(q)\frac{z^{2n}}{[2n]_q!}\sum_{n=0}^{\infty} T_{2n+1}(q)\,\frac{z^{2n+1}}{[2n+1]_q!}.\]
Therefore,
\[\sum_{n=0}^{\infty} z^{2n}\sum_{k=0}^{n}(-1)^k 2^{2k} \frac{\beta_{2k}(q)}{[2k]_q!}\frac{T_{2n-2k+1}(q)}{[2n-2k+1]_q!}=1.\]
Comparing the coefficient of $z^{2n}$, we obtain the desired result.
\end{proof}
\begin{cor}
Let $n\in\mathbb{N}_0$. Then, the $q$-tangent numbers $T_{2n+1}(q)$ are positive numbers.
\end{cor}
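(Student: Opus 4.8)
The plan is to read off the sign of $T_{2n+1}(q)$ from the analytic structure of $Tan_q(z)=S_q(z)/C_q(z)$, regarded as an ordinary meromorphic function of the complex variable $z$. Note that, by \eqref{tan:exp}, the numbers $T_{2n+1}(q)$ are exactly $[2n+1]_q!$ times the ordinary Taylor coefficients of this function, so the symmetric $q$-difference calculus is not needed for this step. First I would record the facts already in hand: by \eqref{S&C} and the Hahn--Exton description, $S_q$ and $C_q$ are entire of order zero, $C_q$ is even with $C_q(0)=1$, $S_q$ is odd, and both have only real simple zeros, which I denote $0<c_1<c_2<\cdots$ for $C_q$ and $0<s_1<s_2<\cdots$ for $S_q$ (so $c_1=C_1$, $s_1=S_1$). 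Thus $Tan_q$ is meromorphic with simple poles exactly at $\pm c_k$.

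Next I would establish the Mittag--Leffler (partial--fraction) expansion
\begin{equation*}
Tan_q(z)=\sum_{k=1}^{\infty} r_k\Big(\frac{1}{z-c_k}+\frac{1}{z+c_k}\Big)=\sum_{k=1}^{\infty}\frac{2z\,r_k}{z^2-c_k^2},\qquad r_k=\frac{S_q(c_k)}{C_q'(c_k)},
\end{equation*}
where $C_q'$ is the ordinary derivative and the residues at $\pm c_k$ coincide because $C_q$ is even and $S_q$ is odd. Since $S_q,C_q$ are of order zero their exponent of convergence is zero, so $\sum_k c_k^{-2}<\infty$; this both guarantees convergence of the series and, together with a bound for $Tan_q$ on circles passing midway between consecutive poles, excludes an extra entire summand. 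Justifying this interchange and the absence of a polynomial part from the order-zero growth of $S_q$ and $C_q$ away from their zeros is the main analytic obstacle, which I would handle by the standard argument of integrating $Tan_q(\zeta)/(\zeta-z)$ over an expanding sequence of such circles.

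The decisive algebraic point is that $r_k<0$ for every $k$. I would obtain this from the interlacing $0<c_1<s_1<c_2<s_2<\cdots$, which follows from the $q$-difference relations \eqref{delta Sine and Cosine}, or directly from the interlacing of the positive zeros of $J^{(3)}_{-1/2}$ and $J^{(3)}_{1/2}$ in \cite{Koelink and Swarttouw}. Given the interlacing, the sign pattern is forced: since $C_q(0)=1>0$ and each $c_k$ is a simple sign change, the sign of $C_q'(c_k)$ is $(-1)^k$; likewise $S_q>0$ on $(0,s_1)$ and $c_k\in(s_{k-1},s_k)$ (with $s_0=0$), so the sign of $S_q(c_k)$ is $(-1)^{k-1}$. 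Hence $r_k=S_q(c_k)/C_q'(c_k)<0$ for all $k$. Finally, writing $-r_k>0$ and expanding each term as a geometric series,
\begin{equation*}
Tan_q(z)=\sum_{k=1}^{\infty}(-r_k)\,\frac{2z}{c_k^2-z^2}=\sum_{m=0}^{\infty}\Big(2\sum_{k=1}^{\infty}\frac{-r_k}{c_k^{2m+2}}\Big)z^{2m+1},
\end{equation*}
I would compare with \eqref{tan:exp} to get $T_{2m+1}(q)=2\,[2m+1]_q!\sum_{k=1}^{\infty}(-r_k)\,c_k^{-(2m+2)}$, a sum of positive terms, whence $T_{2m+1}(q)>0$.

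Alternatively, one can stay closer to the preceding theorem: \eqref{Convol.} is equivalent to $Cot_q(z)\,Tan_q(z)=1$ and yields, with $T_1(q)=1$, the recursion
\begin{equation*}
\frac{T_{2n+1}(q)}{[2n+1]_q!}=-\sum_{k=1}^{n}(-1)^k2^{2k}\frac{\widetilde{\beta}_{2k}(q)}{[2k]_q!}\,\frac{T_{2n-2k+1}(q)}{[2n-2k+1]_q!}\qquad(n\ge1).
\end{equation*}
A routine induction then gives $T_{2n+1}(q)>0$ \emph{provided} $(-1)^{n-1}\widetilde{\beta}_{2n}(q)>0$ for all $n\ge1$. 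However, proving this sign pattern for the $q$-Bernoulli numbers requires exactly the same Mittag--Leffler analysis applied to $z\,Cot_q(z)$ in \eqref{zcot:exp}, so it is no shortcut; for this reason I would present the direct argument above.
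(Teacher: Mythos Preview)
Your Mittag--Leffler argument is correct in outline and is a genuinely different route from the paper's. The paper proceeds exactly via your ``alternative'': it rewrites \eqref{Convol.} as
\[
\frac{T_{2n+1}(q)}{[2n+1]_q!}=\sum_{k=1}^{n}(-1)^{k-1}2^{2k}\frac{\widetilde\beta_{2k}(q)}{[2k]_q!}\,\frac{T_{2n-2k+1}(q)}{[2n-2k+1]_q!},
\]
asserts that $(-1)^{k-1}\widetilde\beta_{2k}(q)>0$ for all $k\ge1$, and inducts from $T_1(q)=1$. Your diagnosis of this route is accurate: the sign pattern of the $q$-Bernoulli numbers is not proved anywhere in the paper, so the paper's argument rests on an unverified claim. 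What your partial-fraction approach buys is a self-contained proof that bypasses this gap, at the price of some honest complex-analytic work (boundedness of $Tan_q$ on the expanding contours, absence of an entire remainder) and the interlacing input; be aware that the rescalings relating $C_q,S_q$ to $J^{(3)}_{\pm 1/2}$ differ by a factor of $q^{1/2}$, so the interlacing is not quite a direct quotation from \cite{Koelink and Swarttouw} and is cleanest to derive from the relation $\frac{\delta_q S_q}{\delta_q z}=C_q$ together with a $q$-Rolle-type argument. Conversely, the paper's induction is short and elementary \emph{once} one has $(-1)^{k-1}\widetilde\beta_{2k}(q)>0$, but as you observe, establishing that sign is essentially the same Mittag--Leffler analysis applied to $z\,Cot_q(z)$ via \eqref{zcot:exp}; so your direct argument is not merely an alternative but supplies the missing ingredient.
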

\begin{proof}
From Equation \eqref{Convol.}, we get
 \[\frac{T_{2n+1}}{[2n+1]_q!}=\sum_{k=1}^{n}(-1)^{k-1} (2)^{2k} \frac{\beta_{2k}(q)}{[2k]_q!}\frac{T_{2n-2k+1}(q)}{[2n-2k+1]_q!}.\]
Since $(-1)^{k-1}\beta_{2k}>0$ for $k\in\mathbb{N}$ and  $T_1(q)=1>0$, then we can prove the result by induction on $n$ for all $n\in\mathbb{N}_0$.
\end{proof}
\vskip5mm
\noindent We define a sequence of polynomials $\widetilde{M}_n(z;q)$ by the generating function
\be\label{M_n(z;q)} \dfrac{exp_q(zw)}{exp_q(w/2)+exp_q(-w/2)}=\sum_{n=0}^{\infty}\widetilde{M}_n(z;q) \frac{w^n}{[n]_q!}.\ee
Similarly to Proposition \ref{Prop:B&A}, Theorem \ref{inverse of A_n} and  Corollary \ref{Sinh_q}, we have the following results.
\begin{prop}\label{prop:E}
For $n\in\mathbb{N}$, the $q$-Euler polynomials $\widetilde{E}_n(z;q)$ can be represented in terms of $\widetilde{M}_{n}(z;q)$ as
\begin{equation}\label{prop:1-2}
\widetilde{E}_n(z;q)= \sum_{k=0}^{n}\qbinom{n}{k} \, (-1)^k\, (\frac{q^{k/4}}{2})^{k-1}\, \widetilde{M}_{n-k}(z;q).
\end{equation} \end{prop}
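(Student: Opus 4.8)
The plan is to mirror the proof of Proposition~\ref{Prop:B&A}, exploiting the multiplicative structure of the two generating functions. The starting point is the elementary factorization
\[
\frac{2\,exp_q(zw)\,exp_q(-w/2)}{exp_q(w/2)+exp_q(-w/2)} = 2\,exp_q(-w/2)\cdot\frac{exp_q(zw)}{exp_q(w/2)+exp_q(-w/2)},
\]
which rewrites the generating function~\eqref{Defn.Euler} of the $\widetilde{E}_n(z;q)$ as the product of $2\,exp_q(-w/2)$ with the generating function~\eqref{M_n(z;q)} of the $\widetilde{M}_n(z;q)$.

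Next I would expand $2\,exp_q(-w/2)$ using the series~\eqref{Def EX}, so that the coefficient of $w^k$ is $\frac{q^{k(k-1)/4}}{[k]_q!}(-1)^k 2^{1-k}$, and combine it with $\sum_{n} \widetilde{M}_n(z;q)\frac{w^n}{[n]_q!}$ via the Cauchy product. Collecting the coefficient of $\frac{w^n}{[n]_q!}$ on both sides and using $\frac{[n]_q!}{[k]_q!\,[n-k]_q!}=\qbinom{n}{k}$ then yields
\[
\widetilde{E}_n(z;q)=\sum_{k=0}^{n}\qbinom{n}{k}\,(-1)^k\,q^{k(k-1)/4}\,2^{1-k}\,\widetilde{M}_{n-k}(z;q).
\]
Finally I would recast the scalar factor into the form stated in the proposition via the identity $q^{k(k-1)/4}\,2^{1-k}=\bigl(q^{k/4}/2\bigr)^{k-1}$, which holds because $\bigl(q^{k/4}/2\bigr)^{k-1}=q^{k(k-1)/4}/2^{k-1}$.

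There is essentially no analytic obstacle: the argument is a purely formal manipulation of power series in $w$, and since both sides are polynomials in $z$ no convergence or domain restriction is needed. The only point demanding care is the bookkeeping of the factor $2$ in the numerator together with the $2^{-k}$ produced by $exp_q(-w/2)$, and the matching of the resulting $q$-exponent $k(k-1)/4$ against the compact expression $\bigl(q^{k/4}/2\bigr)^{k-1}$; this is exactly the step where the proof of Proposition~\ref{Prop:B&A} differs in its constant, so I would carry it out explicitly rather than simply invoking that earlier result.
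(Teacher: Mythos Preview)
Your proposal is correct and follows exactly the approach the paper intends: the paper states that this proposition is proved ``similarly to Proposition~\ref{Prop:B&A}'', i.e., by factoring the generating function~\eqref{Defn.Euler} as $2\,exp_q(-w/2)$ times~\eqref{M_n(z;q)}, taking the Cauchy product, and comparing coefficients of $w^n/[n]_q!$. Your explicit verification that $q^{k(k-1)/4}\,2^{1-k}=(q^{k/4}/2)^{k-1}$ is the only extra bookkeeping needed, and it is correct.
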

\begin{thm}\label{Lem:2}
 For $n\in\mathbb{N}$ and $z\in\mathbb{C}$,  $\widetilde{M}_n(z;q)$ can be represented in terms of the $q$-Euler polynomials $\widetilde{E}_{n}(z;q)$ as
\begin{equation}
\widetilde{M}_{n}(z;q)= \frac{[n]_q!}{2}\sum_{j=0}^{n} (-\frac{1}{2})^{j+1}\,  \frac{\tilde{a}_j}{[n-j]_q!} \widetilde{E}_{n-j}(z;q),\end{equation}
where\begin{equation*}
\tilde{a}_j=  \sum_{k=0}^{j} (-1)^k \,\sum_{s_1+s_2+\ldots+s_k=n\atop  s_i>0\,(i=1,\ldots,k) } \dfrac{ q^{ \sum_{i=0}^k s_i(s_i+1)/4 }}{[s_1+1]_q! [s_2+1]_q!\ldots [s_{k}+1]_q!}.\end{equation*}
\end{thm}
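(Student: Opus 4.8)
The plan is to follow the proof of Theorem~\ref{inverse of A_n} almost verbatim, since $\widetilde{M}_n(z;q)$ relates to $\widetilde{E}_n(z;q)$ exactly as $\widetilde{A}_n(z;q)$ relates to $\widetilde{B}_n(z;q)$. First I would record the factorization of the generating function \eqref{M_n(z;q)},
\begin{equation*}
\dfrac{exp_q(zw)}{exp_q(w/2)+exp_q(-w/2)}= \dfrac{1}{2\,exp_q(-w/2)}\,\Big[\dfrac{2\,exp_q(zw)\,exp_q(-w/2)}{exp_q(w/2)+exp_q(-w/2)}\Big],
\end{equation*}
in which the bracketed factor is precisely the generating function \eqref{Defn.Euler} of the $q$-Euler polynomials. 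Consequently $\sum_{n=0}^{\infty}\widetilde{M}_n(z;q)\frac{w^n}{[n]_q!}$ equals $\tfrac{1}{2}\,\big(exp_q(-w/2)\big)^{-1}$ multiplied by $\sum_{n=0}^{\infty}\widetilde{E}_n(z;q)\frac{w^n}{[n]_q!}$.

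Next I would expand the scalar factor $\big(exp_q(-w/2)\big)^{-1}$ as a power series in $w$ by invoking Lemma~\ref{inverse of exp.} with $z$ replaced by $-w/2$. Collecting the resulting nested multinomial sums, together with the powers $(-\tfrac{1}{2})^{j+1}$ and the $q$-factorials $[s_i+1]_q!$ that come from the shift in the exponent, assembles exactly the constants $\tilde{a}_j$ of \eqref{cons.}; this is the same reduction already used in Theorem~\ref{inverse of A_n}.

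I would then multiply the two series by the $q$-Cauchy product, so that the coefficient of $\frac{w^n}{[n]_q!}$ on the right becomes a $q$-binomial convolution of the $\tilde{a}_j$ against the $\widetilde{E}_{n-j}(z;q)$. Equating this coefficient with $\widetilde{M}_n(z;q)$ and simplifying the $q$-binomial and factorial factors yields the stated identity. The only structural differences from Theorem~\ref{inverse of A_n} are cosmetic: there is no extra factor $w$ in the numerator of \eqref{M_n(z;q)}, so the summation index runs from $j=0$ to $n$ and $\widetilde{E}_{n-j}(z;q)$ appears (rather than $j=0$ to $n-1$ with $\widetilde{B}_{n-j-1}(z;q)$), and the overall factor $\tfrac{1}{2}$ coming from $\tfrac{1}{2\,exp_q(-w/2)}$ survives in front of the sum.

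The main obstacle, exactly as in the Bernoulli case, is the coefficient bookkeeping: one must verify that after the $q$-Cauchy multiplication the nested multinomial sums arising from $\big(exp_q(-w/2)\big)^{-1}$ collapse to precisely $\tilde{a}_j$ as written in \eqref{cons.}, with every power of $q$ and every $q$-factorial $[s_i+1]_q!$ correctly matched. Once that expansion is in hand (its content is supplied by Lemma~\ref{inverse of exp.}), what remains is the routine comparison of the coefficients of $\frac{w^n}{[n]_q!}$ on the two sides.
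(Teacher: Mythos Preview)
Your proposal is correct and is exactly the approach the paper intends: the paper does not give an independent proof of Theorem~\ref{Lem:2} but simply states that it follows ``similarly to \ldots\ Theorem~\ref{inverse of A_n},'' and your write-up carries out precisely that parallel computation, correctly noting the two cosmetic differences (the summation range $0\le j\le n$ and the surviving global factor $\tfrac{1}{2}$).
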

\begin{cor}
For $n\in\mathbb{N}_0$ and $z\in\mathbb{C}$, the power series of the polynomial $\widetilde{M}_{n}(z;q)$ takes the form
\begin{equation*}
\widetilde{M}_{n}(z;q)= \sum_{m=0}^{n} c_m(n) \, \frac{z^m}{[m]_q!},
\end{equation*} where
\begin{equation*} c_m(n)= \frac{[n]_q!}{2}\, (\frac{-1}{2})^{n+1}\sum_{r=n}^{m}  q^{\frac{m(m-1)}{4}}\frac{(-2)^r\, \tilde{a}_r}{[r-m]_q!}\, \widetilde{E}_{r-m}.\end{equation*}
\end{cor}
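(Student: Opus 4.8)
The plan is to reproduce, verbatim in structure, the argument used for Corollary~\ref{Cor.2}, simply swapping every piece of $q$-Bernoulli data for the corresponding $q$-Euler data. The two inputs are Theorem~\ref{Lem:2}, which expands $\widetilde{M}_n(z;q)$ as a combination of the $q$-Euler polynomials $\widetilde{E}_{n-j}(z;q)$, and the first Proposition of this section, which gives the monomial expansion $\widetilde{E}_r(z;q)=\sum_{m=0}^{r}\qbinom{r}{m}q^{\frac{m(m-1)}{4}}\widetilde{E}_{r-m}\,z^m$. Feeding the second into the first and gathering powers of $z$ will produce the coefficients $c_m(n)$.

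First I would re-index Theorem~\ref{Lem:2} so that the sum runs over $\widetilde{E}_r(z;q)$ rather than $\widetilde{E}_{n-j}(z;q)$. Setting $r=n-j$ and writing $(-\frac{1}{2})^{j+1}=(-\frac{1}{2})^{n+1}(-2)^{r}$ converts the representation into
\[
\widetilde{M}_{n}(z;q)=\frac{[n]_q!}{2}\Big(-\frac{1}{2}\Big)^{n+1}\sum_{r=0}^{n}\frac{(-2)^r\,\tilde{a}_r}{[r]_q!}\,\widetilde{E}_{r}(z;q),
\]
which is the analogue of the first display in the proof of Corollary~\ref{Cor.2}. Next I would substitute the monomial expansion of $\widetilde{E}_r(z;q)$, producing a double sum over $r$ and $m$, and use $\qbinom{r}{m}/[r]_q!=1/([m]_q!\,[r-m]_q!)$ to isolate the factor $z^m/[m]_q!$.

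The final step is to interchange the order of summation: since $0\le m\le r\le n$, after the swap $m$ runs from $0$ to $n$ and $r$ from $m$ to $n$, so reading off the coefficient of $z^m/[m]_q!$ gives
\[
c_m(n)=\frac{[n]_q!}{2}\Big(-\frac{1}{2}\Big)^{n+1}\sum_{r=m}^{n} q^{\frac{m(m-1)}{4}}\frac{(-2)^r\,\tilde{a}_r}{[r-m]_q!}\,\widetilde{E}_{r-m},
\]
which is exactly the asserted $c_m(n)$ (the summation index runs over $m\le r\le n$). There is no conceptual obstacle here; the only delicate part is the bookkeeping, namely combining the prefactor $\frac{1}{2}$ of Theorem~\ref{Lem:2} with the powers of $-\frac{1}{2}$ so as to land on $(-\frac{1}{2})^{n+1}$, keeping the $q$-power $q^{m(m-1)/4}$ attached to the correct index, and recording the limits of the inner sum correctly after the interchange — all precisely as in Corollary~\ref{Cor.2}.
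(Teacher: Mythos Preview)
Your approach is exactly what the paper intends: the proof of this corollary is omitted in the paper (the four Euler results are introduced with ``Similarly to Proposition~\ref{Prop:B&A}, Theorem~\ref{inverse of A_n} and Corollary~\ref{Sinh_q}, we have the following results''), and your argument is the line-by-line transcription of the proof of Corollary~\ref{Cor.2} with the Euler data in place of the Bernoulli data. One bookkeeping caveat worth recording: after the substitution $r=n-j$ the constant $\tilde a_j$ becomes $\tilde a_{\,n-r}$ rather than $\tilde a_r$, but this same index slip appears verbatim in the paper's own proof of Corollary~\ref{Cor.2} and in the stated formula for $c_m(n)$, so your derivation reproduces the paper's claim exactly.
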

\begin{prop}
For $z\in\mathbb{C}$, we have
\begin{equation*}\frac{1}{Cosh_q(\frac{w}{2})}= \sum_{n=0}^{\infty} \widetilde{d}_n\, w^{n}, \quad  |\frac{w}{2}|< C_1,\end{equation*}
where  $\displaystyle \widetilde{d}_n= \sum_{j=0}^{n} (-\frac{1}{2})^{j+1}\,  \frac{\tilde{a}_j}{[n-j]_q!} \widetilde{E}_{n-j}$ and $\tilde{a}_j$ the constants defined in \eqref{cons.}.
\end{prop}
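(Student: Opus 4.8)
The plan is to evaluate the generating function \eqref{M_n(z;q)} of the polynomials $\widetilde{M}_n(z;q)$ at the single point $z=0$ and then feed the result into the inversion formula of Theorem \ref{Lem:2}. First I would set $z=0$ in \eqref{M_n(z;q)}; since $exp_q(0)=1$, the numerator on the left collapses and one obtains
\begin{equation*}
\frac{1}{exp_q(\frac{w}{2})+exp_q(\frac{-w}{2})}=\sum_{n=0}^{\infty}\widetilde{M}_n(0;q)\,\frac{w^n}{[n]_q!}.
\end{equation*}
Next I would recall from \eqref{sh+cosh} that $exp_q(\frac{w}{2})+exp_q(\frac{-w}{2})=2\,Cosh_q(\frac{w}{2})$, so multiplying through by $2$ yields
\begin{equation*}
\frac{1}{Cosh_q(\frac{w}{2})}=\sum_{n=0}^{\infty}\frac{2\,\widetilde{M}_n(0;q)}{[n]_q!}\,w^n.
\end{equation*}
This already displays the claimed power-series shape, so the task reduces to identifying the coefficient $\widetilde{d}_n=\dfrac{2\,\widetilde{M}_n(0;q)}{[n]_q!}$.

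For that identification I would substitute $z=0$ into the representation of $\widetilde{M}_n(z;q)$ furnished by Theorem \ref{Lem:2}, using the convention $\widetilde{E}_{n-j}(0;q)=\widetilde{E}_{n-j}$, to get
\begin{equation*}
\widetilde{M}_n(0;q)=\frac{[n]_q!}{2}\sum_{j=0}^{n}\Big(-\frac{1}{2}\Big)^{j+1}\frac{\tilde{a}_j}{[n-j]_q!}\,\widetilde{E}_{n-j},
\end{equation*}
with $\tilde{a}_j$ the constants of \eqref{cons.}. Dividing by $[n]_q!$ and multiplying by $2$ cancels the prefactor $\frac{[n]_q!}{2}$ exactly, producing $\widetilde{d}_n=\sum_{j=0}^{n}(-\frac{1}{2})^{j+1}\frac{\tilde{a}_j}{[n-j]_q!}\,\widetilde{E}_{n-j}$, which is precisely the stated formula. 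The algebra here is immediate once Theorem \ref{Lem:2} is available.

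The only point requiring genuine care is the region of validity $|\frac{w}{2}|<C_1$, which I regard as the main (and modest) obstacle. The manipulations above are legitimate only where $Cosh_q(\frac{w}{2})\neq0$, and by \eqref{sh+cosh} we have $Cosh_q(\frac{w}{2})=C_q(\frac{iw}{2})$; hence the reciprocal is analytic until $\frac{iw}{2}$ first meets a zero of $C_q$. Because the zeros of $C_q$ are real, simple and symmetric with smallest positive zero $C_1$, the nearest singularity of $\frac{1}{Cosh_q(w/2)}$ in the $w$-plane sits at $|\frac{w}{2}|=C_1$, which fixes the radius of convergence and justifies the termwise expansion on $|\frac{w}{2}|<C_1$. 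Thus, apart from this analyticity bookkeeping, the proposition follows directly as the $Cosh_q$-analog of Corollary \ref{Sinh_q}.
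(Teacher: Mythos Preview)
Your proposal is correct and follows exactly the route the paper intends: the paper does not give an explicit proof but states that the proposition is obtained ``similarly to Proposition~\ref{Prop:B&A}, Theorem~\ref{inverse of A_n} and Corollary~\ref{Sinh_q}'', i.e.\ by specializing the generating function \eqref{M_n(z;q)} at $z=0$ and inserting the inversion formula of Theorem~\ref{Lem:2}. Your additional justification of the radius of convergence via the zeros of $C_q$ is a welcome elaboration of a point the paper leaves implicit.
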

\section{{\bf A $q$-Lidstone series involving $q$-Bernoulli polynomials}} \label{q-Lidstone-S}

Our aim of this section is to prove that an entire function $f$ may be expanded  in terms of $q$-Lidstone polynomials, where the coefficients of these  polynomials are the even powers of the $q$-derivative $\frac{\delta_q f(z)}{\delta_q z}$ at $0$ and $1$.
\vskip3mm

We begin by recalling some definitions and results from \cite{Ramis} which will be used in the proof of the main result.
\begin{defn}\label{def:Ramiss} Let $k$ be a non zero real number, and let $p$ be a real number with $|p|>1$.  An entire function $f$ has a $p$-exponential growth of order $k$ and a finite type, if there exist real numbers $K>0$ and $\alpha$, such that
\[|f(z)|< K p^{\frac{k}{2}\left(\dfrac{log |z|}{log \, p}\right)^2} |z|^{\alpha},\]
or equivalently,
\[|f(z)|\leq K e^{\frac{k}{2\log p}(\log |z|)^2+\alpha \log |z|}.\]
\end{defn}
\begin{defn}
Let $k$ be a non zero real number and let $p$ be a real number, with
 $|p|>1$. A formal power series expansion  $ \hat{f}:=\sum_{=0}^{\infty} a_n z^n$ is $p$-Gevery of order $\frac{1}{k}$ (or of level $k$), if there exists real numbers $C$, $A>0$ such that
\[|a_n|<Cp^{\frac{n(n+1)}{2k} }\, A^n.\]
\end{defn}
\begin{prop}\label{prop:Ramiss}
Let $k$ be a non zero real number and  $p$ be a real number, with
$p>1$. The following statements are equivalent.
\begin{itemize}
\item[i]  The series $\hat{f}:=\sum_{n=0}^{\infty}a_n x^n$ is $p$-Gevery of
order $-k$;
\item[ii] The series $\hat{f}$ is the power series expansion at the origin of
an entire function $f$ having a $p$-exponential growth of order $k$ and
a finite type $\alpha$, where
\[|a_n|<Ke^{-\frac{(n-\alpha)^2}{2k} },\quad K>0.\]
\end{itemize}
\end{prop}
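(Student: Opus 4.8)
The plan is to prove the two implications separately, exploiting the fact that the two estimates are Legendre-transform duals of one another: the quadratic-in-$\log|z|$ exponent in the growth bound of (ii) and the quadratic-in-$n$ exponent in the coefficient bound of (i) are conjugate, and each implication is obtained by the corresponding optimization. Throughout I take $k>0$, the case of genuine growth (the case $k<0$ is symmetric after interchanging the roles of growth and decay). Concretely, I would prove (ii)$\Rightarrow$(i) by Cauchy's estimates and (i)$\Rightarrow$(ii) by a maximal-term (Laplace) estimate for the series.

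For (ii)$\Rightarrow$(i), set $M(r)=\max_{|z|=r}|f(z)|$. Cauchy's inequality gives $|a_n|=|f^{(n)}(0)/n!|\le M(r)\,r^{-n}$ for every $r>0$. Inserting the hypothesis $M(r)\le K e^{\frac{k}{2\log p}(\log r)^2+\alpha\log r}$ and writing $t=\log r$, the exponent of the resulting bound on $|a_n|$ becomes $\frac{k}{2\log p}t^2+(\alpha-n)t$, a parabola in $t$ with positive leading coefficient. Minimizing over $t$ yields $t=(n-\alpha)\frac{\log p}{k}$ and minimal value $-\frac{(n-\alpha)^2}{2k}\log p$, hence $|a_n|\le K p^{-(n-\alpha)^2/(2k)}$, the sharp estimate recorded in (ii) (in the base-$p$ normalization). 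Expanding $(n-\alpha)^2=n(n+1)-(1+2\alpha)n+\alpha^2$ and absorbing the linear and constant terms into new constants gives $|a_n|<C p^{-n(n+1)/(2k)}A^n$ with $A=p^{(1+2\alpha)/(2k)}$; that is, $\hat f$ is $p$-Gevrey of order $-k$.

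For (i)$\Rightarrow$(ii), the bound $|a_n|<C p^{-n(n+1)/(2k)}A^n$ forces $|a_n|^{1/n}\to 0$, so $\hat f$ has infinite radius of convergence and defines an entire $f$. I would estimate $M(r)\le\sum_n|a_n|r^n\le C\sum_n p^{-n(n+1)/(2k)}(Ar)^n$ and analyse the last sum through its maximal term. The logarithm of the general term, $g(n)=-\frac{n(n+1)}{2k}\log p+n\log(Ar)$, is exactly a downward parabola in $n$ with constant second difference $-\frac{\log p}{k}$, maximized at $n_\ast=\frac{k\log(Ar)}{\log p}$, where $g(n_\ast)=\frac{k}{2\log p}(\log Ar)^2$ up to a bounded term. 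Because $g$ is exactly quadratic, $g(n)=g(n_\ast)-\frac{\log p}{2k}(n-n_\ast)^2$, so $\sum_n e^{g(n)}=e^{g(n_\ast)}\sum_{n\ge 0}e^{-\frac{\log p}{2k}(n-n_\ast)^2}$, and the Gaussian sum is bounded above, uniformly in $n_\ast$ (hence in $r$), by a finite theta-type constant $\Theta$. Expanding $(\log Ar)^2=(\log r)^2+2(\log A)(\log r)+(\log A)^2$ then shows $M(r)\le K' e^{\frac{k}{2\log p}(\log r)^2+\alpha\log r}$ with $\alpha=\frac{k\log A}{\log p}$, i.e. $f$ has $p$-exponential growth of order $k$ and finite type.

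The main obstacle is the bookkeeping in (i)$\Rightarrow$(ii): one must show that only the leading coefficient $\frac{k}{2\log p}$ of $(\log r)^2$ is fixed by the growth order $k$, while the geometric factor $A^n$ and the shift from $n^2$ to $n(n+1)$ feed solely into the type $\alpha$ and the constant $K'$, leaving the leading order intact. The clean point that makes this tractable is precisely that $g(n)$ is exactly quadratic, so the series equals its maximal term times a uniformly bounded Jacobi-theta sum rather than merely being comparable to it. The reverse implication, by contrast, is an exact one-variable minimization and poses no real difficulty.
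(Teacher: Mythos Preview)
The paper does not give its own proof of this proposition: it is explicitly introduced as one of the ``definitions and results from \cite{Ramis}'' and is quoted without argument, so there is nothing in the paper to compare your proof against.

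Your argument is the standard one and is essentially correct. The Cauchy estimate combined with minimisation of the quadratic exponent in $t=\log r$ gives exactly $|a_n|\le K\,p^{-(n-\alpha)^2/(2k)}$ (note the base should be $p$, not $e$, which is presumably a misprint in the paper's statement), and the algebra reducing this to the $p$-Gevrey form is fine. In the converse direction your maximal-term/theta-sum bound is the right device; the only slips are cosmetic: the exact maximiser is $n_\ast=\dfrac{k\log(Ar)}{\log p}-\dfrac12$ rather than $\dfrac{k\log(Ar)}{\log p}$, and carrying this through shifts the resulting type to $\alpha=\dfrac{k\log A}{\log p}-\dfrac12$. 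Neither affects the growth order or the validity of the equivalence. One small caveat worth recording: the optimising radius $r=e^{t}$ with $t=(n-\alpha)\dfrac{\log p}{k}$ tends to infinity with $n$, so even if the growth hypothesis is only assumed for large $|z|$ the argument still goes through for all large $n$, and finitely many initial coefficients can be absorbed into the constant $C$. Your remark that the $k<0$ case is ``symmetric'' is a bit glib---there the series is genuinely divergent and the Legendre duality runs the other way---but since the paper only uses $k=2>0$ this does not matter here.
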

\begin{rem}
The series $\sum_{n=0}^\infty\, \frac{q^{\frac{n(n-1)}{4}}}{[n]_q!}\, z^n$ which defines the function $exp_q(z)$ is $q^{-1}$- Gevery of order $-2$. Consequently, $exp_q(z)$ has $q^{-1}$- exponential growth of order $2$.
\end{rem}

\vskip5mm
\begin{prop}\label{prop:An}  Let $z $ and $w$ be complex numbers
such that $|w|<S_1$. Then
\be\label{Eq:Ln}
Sinh_q(wz)\, Csch_q(w)
=\sum_{n=0}^{\infty} \frac{2^{2n+1}}{[2n+1]_q!} \widetilde{A}_{2n+1}(z/2;q)\, w^{2n},\ee
where $\widetilde{A}_{n}(z;q)$ are the $q$-polynomials defined in \eqref{A_n(z;q)}.
\end{prop}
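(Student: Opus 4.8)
The plan is to reduce the identity to the defining generating function \eqref{A_n(z;q)} of the polynomials $\widetilde{A}_n(z;q)$ and then to extract its odd part in the variable $w$. First I would rewrite the left-hand side purely in terms of the third $q$-exponential: by the definition \eqref{sh+cosh} of $Sinh_q$ one has $Sinh_q(wz)=\tfrac12\big(exp_q(wz)-exp_q(-wz)\big)$ and $Sinh_q(w)=\tfrac12\big(exp_q(w)-exp_q(-w)\big)$, so that
\[
Sinh_q(wz)\,Csch_q(w)=\frac{exp_q(wz)-exp_q(-wz)}{exp_q(w)-exp_q(-w)}.
\]
The hypothesis $|w|<S_1$ guarantees $Sinh_q(w)\neq 0$ (by \eqref{sh+cosh} the zeros of $Sinh_q$ lie on the imaginary axis at the points $\pm i\,s$ with $s$ a zero of $S_q$, the smallest in modulus being $S_1$), so this quotient is analytic on the disc and the series manipulations below are justified.

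Next I would recognise this quotient inside \eqref{A_n(z;q)}. Writing $G(w,\zeta):=\dfrac{w\,exp_q(\zeta w)}{exp_q(w/2)-exp_q(-w/2)}=\sum_{n\ge 0}\widetilde{A}_n(\zeta;q)\tfrac{w^n}{[n]_q!}$ and replacing $w$ by $2w$ produces $exp_q(w)-exp_q(-w)$ in the denominator, whence
\[
G(2w,z/2)=\frac{2w\,exp_q(wz)}{exp_q(w)-exp_q(-w)},\qquad G(2w,-z/2)=\frac{2w\,exp_q(-wz)}{exp_q(w)-exp_q(-w)}.
\]
Subtracting the two gives $G(2w,z/2)-G(2w,-z/2)=2w\,Sinh_q(wz)\,Csch_q(w)$, so the quantity to be computed is exactly $\tfrac{1}{2w}$ times this difference.

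The crux is a parity statement for the polynomials, namely $\widetilde{A}_n(-\zeta;q)=(-1)^n\,\widetilde{A}_n(\zeta;q)$. I would prove it by observing that the substitution $(w,\zeta)\mapsto(-w,-\zeta)$ leaves $G$ invariant — the explicit factor $w$ and the odd function $exp_q(w/2)-exp_q(-w/2)$ each change sign, while $exp_q(\zeta w)$ is unchanged — and then comparing the coefficients of $w^n$ on the two sides of $G(w,\zeta)=G(-w,-\zeta)$. With this parity in hand, $\widetilde{A}_n(z/2;q)-\widetilde{A}_n(-z/2;q)=\big(1-(-1)^n\big)\widetilde{A}_n(z/2;q)$ vanishes for even $n$ and equals $2\widetilde{A}_n(z/2;q)$ for odd $n$. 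This parity lemma is the only genuine obstacle; everything else is bookkeeping.

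Finally I would assemble the pieces. Substituting the series for $G(2w,\pm z/2)$, only the odd indices $n=2m+1$ survive, giving
\[
G(2w,z/2)-G(2w,-z/2)=\sum_{m=0}^{\infty}2\,\widetilde{A}_{2m+1}(z/2;q)\,\frac{(2w)^{2m+1}}{[2m+1]_q!},
\]
and dividing by $2w$ collapses the powers of $2$ and $w$ (since $\tfrac{1}{2w}\cdot 2\cdot 2^{2m+1}w^{2m+1}=2^{2m+1}w^{2m}$) to yield the asserted expansion $\sum_{m\ge 0}\tfrac{2^{2m+1}}{[2m+1]_q!}\widetilde{A}_{2m+1}(z/2;q)\,w^{2m}$. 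The convergence of this series on $|w|<S_1$ follows from the analyticity established in the first step, consistent with Corollary~\ref{Sinh_q}.
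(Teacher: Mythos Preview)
Your proof is correct and follows essentially the same route as the paper: rewrite $Sinh_q(wz)\,Csch_q(w)$ as a quotient of $q$-exponentials, recognise it as $\tfrac{1}{2w}$ times a difference of two instances of the generating function \eqref{A_n(z;q)} (with $w\mapsto 2w$), and extract the odd part. The only cosmetic difference is that the paper obtains the second instance by sending $w\mapsto -2w$ (still with $\zeta=z/2$), so the factor $(-1)^n$ appears directly from $(-2w)^n$ and no separate parity lemma for $\widetilde{A}_n$ is needed; your parity argument via $G(w,\zeta)=G(-w,-\zeta)$ is exactly what underlies that substitution.
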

\begin{proof}First, note that the function $g_q(z,w):= Sinh_q(wz)\, Csch_q (w)$ is holomorphic for $|w|<S_1$. By using \eqref{sh+cosh},  we can write
\[ g_q(z,w):= \dfrac{exp_q(zw)-exp_q(-zw)}{exp_q (w)-exp_q (-w)}.\]
 Then, by using \eqref{A_n(z;q)} we get
\begin{eqnarray*}
g_q(z,w)&:=&\dfrac{exp_q(zw)-exp_q(-zw)}{exp_q(w)-exp_q(-w)}\\
&=& \frac{1}{2w} \dfrac{2w\, exp_q(zw)}{exp_q(w)-exp_q(-w)} - \frac{1}{2w}\dfrac{2w\, exp_q(-zw)}{exp_q(-w)-exp_q(w)}\\
&=& \frac{1}{2w} \sum_{n=0}^{\infty}  \widetilde{A}_{n}(z/2;q)\frac{(2w)^n}{[n]_q!}- \frac{1}{2w}\sum_{n=0}^{\infty}  \widetilde{A}_{n}(z/2;q)\frac{(-2w)^n}{[n]_q!}\\
&=&\sum_{n=0}^{\infty} \frac{2^{2n+1}}{[2n+1]_q!} \widetilde{A}_{2n+1}(z/2;q)\, w^{2n}.
\end{eqnarray*}
\end{proof}
Henceforth, we will consider the notation \be\label{A_n2}\widetilde{A}_n(z)=\frac{2^{2n+1}}{[2n+1]_q!} \widetilde{A}_{2n+1}(z/2;q).\ee
So, the previous result can be restated in the following form:
\be\label{Eq:An2}
\dfrac{exp_q(zw)-exp_q(-zw)}{exp_q(w)-exp_q(-w)}=\sum_{n=0}^{\infty} \widetilde{A}_{n}(z) w^{2n},\ee
\begin{cor}\label{cor:A}
For $n\in\mathbb{N}$, the $q$-polynomials  $\widetilde{A}_n(z)$ satisfy the\
 $q$-difference equation
\[   \frac{\delta^{2}_q \,\widetilde{A}_n(z) }{\delta_q z^2}  =\widetilde{A}_{n-1}(z),\]
with the boundary conditions $\widetilde{A}_n(0)=\widetilde{A}_n(1)=0$, and $\widetilde{A}_0(z)=z$.
\end{cor}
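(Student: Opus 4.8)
The plan is to read off all three assertions directly from the generating function \eqref{Eq:An2} by applying the symmetric $q$-difference operator twice in the variable $z$ and then comparing coefficients of $w^{2n}$. The essential input is \eqref{delta E}, which records that $\frac{\delta_q\, exp_q(\pm wz)}{\delta_q z}=\pm w\, exp_q(\pm wz)$.

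First I would establish the recurrence. Writing $N(z,w):=exp_q(zw)-exp_q(-zw)$ for the numerator on the left of \eqref{Eq:An2}, a single application of $\frac{\delta_q}{\delta_q z}$ gives $w\,[exp_q(zw)+exp_q(-zw)]$, and a second application returns $w^2\,N(z,w)$. Since the denominator $exp_q(w)-exp_q(-w)$ does not depend on $z$, we obtain
\[
\frac{\delta_q^2}{\delta_q z^2}\left[\frac{exp_q(zw)-exp_q(-zw)}{exp_q(w)-exp_q(-w)}\right]=w^2\sum_{n=0}^{\infty}\widetilde{A}_n(z)\,w^{2n}.
\]
Carrying the operator through the series on the right-hand side of \eqref{Eq:An2} and shifting the summation index yields $\sum_{n=0}^{\infty}\frac{\delta_q^2\widetilde{A}_n(z)}{\delta_q z^2}\,w^{2n}=\sum_{n=1}^{\infty}\widetilde{A}_{n-1}(z)\,w^{2n}$. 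Comparing the coefficient of $w^{2n}$ for $n\in\mathbb{N}$ gives the asserted equation $\frac{\delta_q^2\widetilde{A}_n(z)}{\delta_q z^2}=\widetilde{A}_{n-1}(z)$, while the $w^0$-coefficient gives the consistency relation $\frac{\delta_q^2\widetilde{A}_0(z)}{\delta_q z^2}=0$, as befits $\widetilde{A}_0(z)=z$.

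Next I would obtain the boundary data by evaluating \eqref{Eq:An2} at $z=0$ and $z=1$. At $z=0$ the numerator vanishes identically, so $\sum_{n\geq0}\widetilde{A}_n(0)\,w^{2n}\equiv0$, forcing $\widetilde{A}_n(0)=0$ for every $n$. At $z=1$ the numerator equals the denominator, so $\sum_{n\geq0}\widetilde{A}_n(1)\,w^{2n}\equiv1$, which gives $\widetilde{A}_0(1)=1$ and $\widetilde{A}_n(1)=0$ for $n\in\mathbb{N}$. Finally the normalization $\widetilde{A}_0(z)=z$ follows by extracting the $w^0$-coefficient: the series \eqref{Def EX} gives $exp_q(zw)-exp_q(-zw)=2zw+O(w^3)$ and $exp_q(w)-exp_q(-w)=2w+O(w^3)$, so the ratio tends to $z$ as $w\to0$.

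The only point requiring care is justifying that $\frac{\delta_q^2}{\delta_q z^2}$ may be applied term by term to the series $\sum_n\widetilde{A}_n(z)\,w^{2n}$. This is legitimate because, by Proposition \ref{prop:An}, the function $g_q(z,w)$ is holomorphic for $|w|<S_1$, so for each fixed $z$ the series converges on a disk in $w$; since $\delta_q$ in the variable $z$ is merely the linear combination of the two evaluations $z\mapsto q^{\pm1/2}z$, it commutes with the locally uniform sum. I do not expect any genuine obstacle beyond this bookkeeping, precisely because $\delta_q$ acts so cleanly on $exp_q$ through \eqref{delta E}.
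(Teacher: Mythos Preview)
Your proof is correct and follows essentially the same route as the paper: differentiate the generating function \eqref{Eq:An2} twice in $z$ via \eqref{delta E}, compare coefficients of $w^{2n}$ for the recurrence, evaluate at $z=0,1$ for the boundary conditions, and read off $\widetilde{A}_0(z)=z$ from the $w^0$-term. Your write-up is in fact slightly more careful than the paper's, in that you explicitly note the consistency relation at $n=0$ and justify the term-by-term application of $\delta_q$.
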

\begin{proof}
By using \eqref{delta E} we obtain
\begin{eqnarray*} \frac{\delta^{2}_q \,g(z,w)}{\delta_q z^2} &=& \sum_{n=0}^{\infty} \frac{\delta^{2}_q \,\widetilde{A}_n(z) }{\delta_q z^2}\, w^{2n}\\ &=& w^2 \dfrac{exp_q(zw)-exp_q(-zw)}{exp_q(w)-exp_q(-w)} \\ &=& \sum_{n} \widetilde{A}_n(z)\, w^{2n+2}.\end{eqnarray*}
Therefore, $  \frac{\delta^{2}_q \,\widetilde{A}_n(z) }{\delta_q z^2}  =\widetilde{A}_{n-1}(z) \quad (n\in\mathbb{N})$. Furthermore,
$$ \widetilde{A}_{0}(z)=\lim_{w\rightarrow 0} \dfrac{exp_q(zw)-exp_q(-zw)}{exp_q(w)-exp_q(-w)} =z.$$
Substitute with $z=0$ and $z=1$ in Equation \eqref{Eq:An2}, we obtain
 $$\widetilde{A}_n(0)=\widetilde{A}_n(1)=0 \, \mbox{ for all } \, n\in\mathbb{N}.$$
\end{proof}
\begin{prop}\label{prop:2} Let $z $ and $w$ be complex numbers such that $|w|<S_1$. Then
\be
\dfrac{exp_q (zw)exp_q(-w)-exp_q(-zw)exp_q(w)}{exp_q(w)-exp_q(-w)}
=\sum_{n=0}^{\infty} \widetilde{B}_n(z)\,w^{2n},\ee
where
\[\widetilde{B}_n(z)=\frac{2^{2n+1}}{[2n+1]_q!}\widetilde{B}_{2n+1}(z/2;q).\]
\end{prop}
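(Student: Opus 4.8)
The plan is to mirror the proof of Proposition~\ref{prop:An}, replacing the generating function \eqref{A_n(z;q)} of the $\widetilde{A}_n(z;q)$ by the $q$-Bernoulli generating function \eqref{GF:Bernoulli-poly}. I set
\[
h_q(z,w):=\dfrac{exp_q(zw)\,exp_q(-w)-exp_q(-zw)\,exp_q(w)}{exp_q(w)-exp_q(-w)}.
\]
Since $exp_q(w)-exp_q(-w)=2\,Sinh_q(w)$ and the zero of $Sinh_q$ smallest in modulus is $S_1$ (the zeros of $S_q$ being real, simple and symmetric, with smallest positive one $S_1$), the denominator does not vanish on $|w|<S_1$, so $h_q(z,w)$ is holomorphic there and has a power series expansion in $w$ on that disk. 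First I split $h_q$ into the two fractions carrying $exp_q(zw)$ and $exp_q(-zw)$ respectively.

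Next I recognise each fraction as a rescaled $q$-Bernoulli generating function. In \eqref{GF:Bernoulli-poly} the substitution $z\mapsto z/2$, $w\mapsto 2w$ gives
\[
\dfrac{exp_q(zw)\,exp_q(-w)}{exp_q(w)-exp_q(-w)}=\frac{1}{2w}\sum_{n=0}^{\infty}\widetilde{B}_n(z/2;q)\,\frac{(2w)^n}{[n]_q!},
\]
while the substitution $z\mapsto z/2$, $w\mapsto -2w$, together with the sign flip $exp_q(-w)-exp_q(w)=-(exp_q(w)-exp_q(-w))$, gives
\[
\dfrac{exp_q(-zw)\,exp_q(w)}{exp_q(w)-exp_q(-w)}=\frac{1}{2w}\sum_{n=0}^{\infty}\widetilde{B}_n(z/2;q)\,\frac{(-2w)^n}{[n]_q!}.
\]
Both expansions are valid on $|w|<S_1$, since the arguments $2w$ and $-2w$ have modulus less than $2S_1$, the radius of convergence of \eqref{GF:Bernoulli-poly}.

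Subtracting and combining the two series under one sum yields
\[
h_q(z,w)=\frac{1}{2w}\sum_{n=0}^{\infty}\widetilde{B}_n(z/2;q)\,\frac{(2w)^n-(-2w)^n}{[n]_q!}.
\]
The factor $(2w)^n-(-2w)^n$ vanishes for even $n$ and equals $2(2w)^n$ for odd $n$, so only $n=2k+1$ survives; after the simplification $\tfrac{1}{2w}\cdot 2(2w)^{2k+1}=2^{2k+1}w^{2k}$ the series becomes $\sum_{k\geq 0}\frac{2^{2k+1}}{[2k+1]_q!}\widetilde{B}_{2k+1}(z/2;q)\,w^{2k}$, which is exactly $\sum_{k\geq 0}\widetilde{B}_k(z)\,w^{2k}$ by the definition of $\widetilde{B}_k(z)$ in the statement.

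The one point needing care, and what I regard as the \emph{main obstacle}, is that each of the two fractions individually has a simple pole at $w=0$ (the denominator behaves like $2w$ near the origin), so the two pieces cannot be treated as honest power series there; it is precisely the cancellation of the even-order terms that removes the pole and makes $h_q$ holomorphic at $0$. I would therefore keep the two pieces together under the single sum before extracting odd indices and reindexing, and justify the term-by-term rearrangement by absolute convergence on any closed subdisk of $|w|<S_1$.
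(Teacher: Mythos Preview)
Your proof is correct and follows essentially the same route as the paper: split the quotient into the two fractions, recognise each as the $q$-Bernoulli generating function \eqref{GF:Bernoulli-poly} evaluated at $(z/2,\pm 2w)$, subtract, and keep only the odd-index terms. Your additional remarks on holomorphy at $w=0$ and on the radius of convergence are fine extra care that the paper leaves implicit.
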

\begin{proof}If  $z $ and $w$ are complex numbers such that
$|w|<S_1$, then
\begin{eqnarray*}
&&\dfrac{exp_q(zw)exp_q(-w)-exp_q(-zw)exp_q(w)}{exp_q(w)-exp_q(-w)}\\
&=&\frac{1}{2w}\left[\dfrac{2w\, exp_q(zw)exp_q(-w)}{ exp_q(w)-exp_q(-w)} \right]-
\frac{1}{2w}\left[\dfrac{2w\, exp_q(-zw)exp_q(w)}{ exp_q(w)-exp_q(-w)}\right]\\
&=&\frac{1}{2w}\sum_{n=0}^{\infty}\dfrac{(2w)^{n}-(-2w)^n}{[n]!}
\widetilde{B}_{n}(z/2;q)\\
&=&\sum_{n=0}^{\infty}\frac{w^{2n}}{[2n+1]!}2^{2n+1}\widetilde{B}_{2n+1}(z/2;q).
\end{eqnarray*}
\end{proof}
As in Corollary~\ref{cor:A}, one can verify that $\widetilde{B}_0(z)=z-1$ and
for $n\in\mathbb{N}$, the $q$-polynomials  $\widetilde{B}_n(z)$ satisfy the\
 $q$-difference equation
\[   \frac{\delta^{2}_q \,\widetilde{B}_n(z) }{\delta_q z^2}  =\widetilde{B}_{n-1}(z),\]
with the boundary conditions $\widetilde{B}_n(0)=\widetilde{B}_n(1)=0$.
\vskip6mm
Now, observe that
\[\begin{gathered}exp_q(zw) =\dfrac{exp_q(zw)exp_q(-w)-exp_q(-zw)exp_q(w)}{exp_q(-w)-exp_q(w)}\\
+\,exp_q(w)\,\dfrac{exp_q(zw)-exp_q(-zw)}{exp_q(w)-exp_q(-w)}.\end{gathered}\]
So, from Proposition \ref{prop:An} and Proposition \ref{prop:2}  we get immediately the following result.
\begin{prop}\label{Prop:3} If $z$ and $w$ are complex numbers such that  $|w|<S_1$, then
\be
 exp_q(zw)=exp_q(w)\, \sum_{n=0}^{\infty} \widetilde{A}_n(z)w^{2n} -\sum_{n=0}^{\infty}\widetilde{B}_n(z)w^{2n}.
\ee
\end{prop}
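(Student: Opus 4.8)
The plan is to read the identity off the algebraic decomposition displayed immediately before the statement, substituting into it the two series expansions already established. First I would record the decomposition
\[exp_q(zw) = \frac{exp_q(zw)exp_q(-w) - exp_q(-zw)exp_q(w)}{exp_q(-w) - exp_q(w)} + exp_q(w)\,\frac{exp_q(zw) - exp_q(-zw)}{exp_q(w) - exp_q(-w)},\]
which I would verify by clearing denominators: both fractions carry (up to sign) the common denominator $exp_q(w) - exp_q(-w)$, and upon combining the numerators the mixed products $exp_q(w)exp_q(-zw)$ cancel while the terms proportional to $exp_q(zw)$ collect to $exp_q(zw)\,(exp_q(w) - exp_q(-w))$, so the right-hand side collapses to $exp_q(zw)$.

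The second step is to insert the two known expansions. By \eqref{Eq:An2} (equivalently Proposition \ref{prop:An}), the second fraction equals $\sum_{n=0}^{\infty} \widetilde{A}_n(z)\, w^{2n}$, so the second summand becomes $exp_q(w)\sum_{n=0}^{\infty}\widetilde{A}_n(z)\,w^{2n}$. For the first fraction I would apply Proposition \ref{prop:2}, being careful with the sign: the denominator there is $exp_q(w) - exp_q(-w)$, whereas in the decomposition the denominator is $exp_q(-w) - exp_q(w)$, its negative; hence the first fraction equals $-\sum_{n=0}^{\infty}\widetilde{B}_n(z)\,w^{2n}$. Adding the two contributions yields exactly
\[exp_q(zw) = exp_q(w)\sum_{n=0}^{\infty}\widetilde{A}_n(z)\,w^{2n} - \sum_{n=0}^{\infty}\widetilde{B}_n(z)\,w^{2n},\]
as claimed.

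Finally I would justify the analytic manipulations on the stated region. The restriction $|w| < S_1$ is precisely what guarantees that the common denominator $exp_q(w) - exp_q(-w) = 2\,Sinh_q(w)$ does not vanish: since $Sinh_q(w) = -i\,S_q(iw)$ by \eqref{sh+cosh} and the zeros of $S_q$ are real, the zeros of $Sinh_q$ are purely imaginary of modulus at least $S_1$. Thus both fractions are holomorphic on the disc $|w| < S_1$, the power-series expansions supplied by Propositions \ref{prop:An} and \ref{prop:2} converge there, and—because $exp_q(w)$ is entire—the indicated product and sum may be formed termwise, legitimizing the rearrangement. I expect no genuine obstacle: all the substance sits in the two preceding propositions, and the only points demanding care are the sign bookkeeping in the first fraction and the confirmation of holomorphy on $|w| < S_1$.
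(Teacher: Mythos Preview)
Your proposal is correct and follows exactly the paper's route: the authors display the same algebraic decomposition of $exp_q(zw)$ immediately before the proposition and then state that the result follows directly from Propositions~\ref{prop:An} and~\ref{prop:2}. Your write-up is in fact more detailed than the paper's, which does not spell out the sign bookkeeping or the holomorphy justification on $|w|<S_1$ (one small caveat: $Sinh_q$ also vanishes at $w=0$, but both fractions have removable singularities there, as their power-series expansions confirm).
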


\vskip6mm
In the following, we assume that $\Psi$ is a comparison function, i.e. $\Psi(t)=\sum_{n=0}^{\infty}\Psi_n t^n$ such that
$\Psi_n>0$ and $\Big(\Psi_{n+1}/\Psi_n\Big) \downarrow 0$ (see \cite{Boas-Buck,Nachbin}). We denote by
$\mathcal{R}_{\Psi}$  the class of all entire functions $f$ such that, for some numbers $\tau$,
\begin{equation}\label{tau}|f(re^{i\theta})|\leq M \Psi(\tau \,r),\end{equation}
as $r\rightarrow \infty$. Here, the complex variable $z$ was written as $z = r e^{i\theta}$ to emphasize that the limit must hold in all directions $\theta$. The infimum of numbers $\tau$ for which \eqref{tau} holds is the $\Psi$-type of the function $f$. This type can be computed by applying Nachbin's  theorem \cite{Nachbin} which states that a function $f(z)=\sum_{n=0}^{\infty}
f_n z^n$ is of $\Psi$-type $\tau  $ if and  only if
$$\tau= \limsup_{n\rightarrow \infty}  \Big|\frac{f_n}{\Psi_n}\Big|^{\frac{1}{n}}.$$

\noindent In~\cite{Boas-Buck}, the authors applied Nachbin's theorem for the generalized Borel transform \begin{equation*}
F(w)=\sum_{n=0}^{\infty} \dfrac{f_n}{\Psi_n w^{n+1}},\end{equation*}
and they  proved the following result.
\begin{thm}\label{thm-Kernel expansion}
Let $f(z)$ belong to the class
$\mathcal{R}_{\Psi}$, and let $D(f)$ be the closed set consists of the
union  of the  set of all singular points  of $F$ and the set of all points
exterior to  the domain of $F$. Then
\[f(z)=\frac{1}{2\pi i} \int_{\Gamma} \Psi(zw) F(w)\,dw\]
where $\Gamma$ encloses $D(f)$.
\end{thm}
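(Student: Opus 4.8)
The plan is to combine Nachbin's characterization of the $\Psi$-type with an elementary coefficient extraction on a circle. Writing $f(z)=\sum_{n=0}^\infty f_n z^n$, Nachbin's theorem identifies the $\Psi$-type as $\tau=\limsup_{n\to\infty}|f_n/\Psi_n|^{1/n}$. First I would use this to locate the domain of the Borel transform. Setting $u=1/w$, the series $\sum_{n=0}^\infty (f_n/\Psi_n)u^n$ has radius of convergence $1/\tau$, so
\[F(w)=\sum_{n=0}^\infty \frac{f_n}{\Psi_n}\,w^{-n-1}\]
converges and is holomorphic on $\{|w|>\tau\}$, with $F(w)\to 0$ as $w\to\infty$. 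Hence every singular point of $F$ and every point exterior to its domain of holomorphy lies in the bounded closed set $\{|w|\le\tau\}$; that is, $D(f)\subset\{|w|\le\tau\}$. I may therefore take $\Gamma$ to be the positively oriented circle $|w|=R$ for any fixed $R>\tau$: such a $\Gamma$ encloses $D(f)$ and lies in the region where the series for $F$ converges uniformly.

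Next I would evaluate the integral by termwise integration. Since $\Psi_{n+1}/\Psi_n\downarrow 0$, the ratio test shows that $\Psi(t)=\sum_n \Psi_n t^n$ is entire, so $w\mapsto\Psi(zw)$ is holomorphic and bounded on the compact contour $\Gamma$ for each fixed $z$. Using uniform convergence of the Laurent series for $F$ on $\Gamma$, I would interchange summation and integration to get
\[\frac{1}{2\pi i}\int_{\Gamma}\Psi(zw)F(w)\,dw=\sum_{n=0}^\infty\frac{f_n}{\Psi_n}\cdot\frac{1}{2\pi i}\int_{\Gamma}\Psi(zw)\,w^{-n-1}\,dw.\]
The remaining integral extracts the $n$-th Taylor coefficient of the entire function $w\mapsto\Psi(zw)=\sum_m \Psi_m z^m w^m$ in the variable $w$: by Cauchy's coefficient formula (equivalently, the orthogonality $\frac{1}{2\pi i}\int_{\Gamma}w^{m-n-1}\,dw=\delta_{mn}$) one has $\frac{1}{2\pi i}\int_{\Gamma}\Psi(zw)w^{-n-1}\,dw=\Psi_n z^n$. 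Substituting back yields $\sum_n (f_n/\Psi_n)\Psi_n z^n=\sum_n f_n z^n=f(z)$, which is the assertion.

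The hard part will be the rigorous justification of the termwise integration together with the independence of the answer from the particular enclosing contour. For the interchange, I would bound $\Psi(zw)$ by its maximum on $\Gamma$ (finite, as $\Psi$ is entire and $\Gamma$ compact) and invoke the uniform convergence of the series for $F$ on $|w|=R>\tau$; I would also verify this holds uniformly for $z$ in compact sets, so that both sides agree as entire functions of $z$. For the independence, since $\Psi(zw)F(w)$ is holomorphic in $w$ throughout $\{|w|>\tau\}$, Cauchy's theorem shows the contour integral takes the same value on every circle $|w|=R$ with $R>\tau$, and more generally on every cycle enclosing $D(f)$; this reconciles the computation, carried out on a convenient circle, with the stated hypothesis that $\Gamma$ merely encloses $D(f)$.
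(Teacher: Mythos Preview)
Your argument is correct and is essentially the standard Boas--Buck proof of this integral representation. Note, however, that the paper does not supply its own proof of this theorem: it is quoted verbatim as a result of Boas and Buck \cite{Boas-Buck} (introduced with ``they proved the following result''), so there is no in-paper proof to compare against. Your sketch reproduces the classical route---Nachbin's formula to locate $D(f)\subset\{|w|\le\tau\}$, uniform convergence of the Laurent series for $F$ on a circle $|w|=R>\tau$, and Cauchy's coefficient formula to recover $f_n$---and the justification you give for the termwise integration and the contour independence is the right one.
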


According to the above arguments  and results we will prove the main theorem.

\begin{thm}\label{Thm:qLidstone expansion 2}
Let $S_1$ be the smallest positive zero of $S_q(z)$.  Assume that one of the following conditions hold:
\begin{enumerate}
\item[(i)] The function   $f(z)$ is an entire function of
$q^{-1}$-exponential growth of order $2$  and a finite type $\alpha$, where
\begin{equation}\label{Co. alpha}
\alpha < 2\,\Big(\frac{1}{4}- \frac{\log S_1}{\log q}\Big).
\end{equation}
\noindent \item[(ii)]  The function $f(z)$ is an entire function of $q^{-1}$-
 exponential  growth  of order less than $2$.
\end{enumerate}
Then  $f(z)$ has a convergent $q$-Lidstone representation
\[f(z)=\sum_{n=0}^{\infty}\left[\widetilde{A}_n(z)\,\frac{\delta^{2n}_q \, f(1)}{\delta_q z^{2n}}-\widetilde{B}_n(z)\,
\frac{\delta^{2n}_q \, f(0)}{\delta_q z^{2n}}\right],\]
where $\widetilde{A}_n(z)$ is the polynomial of degree $2n+1$ defined in
\eqref{A_n2} and
\[\widetilde{B}_n(z):=\frac{2^{2n+1}}{[2n+1]_q!}\widetilde{B}_{2n+1}(z/2;q).\]
\end{thm}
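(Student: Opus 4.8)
The plan is to realize $f$ through the generalized Borel--Nachbin integral representation of Theorem \ref{thm-Kernel expansion} with the comparison function $\Psi=exp_q$, and then to insert the kernel expansion of Proposition \ref{Prop:3} under the integral sign. Since $exp_q$ itself has $q^{-1}$-exponential growth of order $2$, the class $\mathcal{R}_{exp_q}$ is the natural home for functions satisfying $(i)$ or $(ii)$, so the first task is to verify that $f\in\mathcal{R}_{exp_q}$ and, more importantly, that its $\Psi$-type $\tau$ is strictly smaller than $S_1$.

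The second step is the type estimate, which I expect to be the crux of the argument. Writing $f(z)=\sum_{n\ge 0} f_n z^n$ and $\Psi_n=q^{n(n-1)/4}/[n]_q!$, Nachbin's theorem gives $\tau=\limsup_{n\to\infty}|f_n/\Psi_n|^{1/n}$. Under hypothesis $(i)$, Proposition \ref{prop:Ramiss} supplies a coefficient bound whose dominant factor is $q^{(n-\alpha)^2/4}$; the decisive point is that the quadratic-in-$n$ exponent of this bound cancels against the quadratic exponent $q^{-n(n-1)/4}$ coming from $1/\Psi_n$, leaving only a linear contribution. Carrying this out yields $\tau=q^{(1-2\alpha)/4}$, and the requirement $\tau<S_1$ becomes, after taking logarithms and dividing by the negative number $\log q$, precisely the inequality \eqref{Co. alpha}. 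Under hypothesis $(ii)$ the order of growth is strictly less than $2$, so the coefficients decay faster than $\Psi_n$ and one obtains $\tau=0<S_1$ immediately. In either case the set $D(f)$ of singular points of the Borel transform $F$ lies in the disc $|w|\le\tau$, so I may fix a circle $\Gamma=\{|w|=\rho\}$ with $\tau<\rho<S_1$; this is the contour on which both Theorem \ref{thm-Kernel expansion} and the expansion of Proposition \ref{Prop:3} are simultaneously valid.

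With this contour in hand, I substitute the kernel expansion $exp_q(zw)=exp_q(w)\sum_{n\ge0}\widetilde{A}_n(z)w^{2n}-\sum_{n\ge0}\widetilde{B}_n(z)w^{2n}$ of Proposition \ref{Prop:3} into
\[f(z)=\frac{1}{2\pi i}\int_{\Gamma}exp_q(zw)\,F(w)\,dw,\]
and interchange summation and integration. The interchange is legitimate because on the compact circle $|w|=\rho<S_1$ the two series in $w$ converge uniformly (their radius of convergence is the nearest zero $S_1$ of $Sinh_q$), while $F$ and $exp_q$ are bounded there. This produces
\[f(z)=\sum_{n=0}^{\infty}\widetilde{A}_n(z)\,\frac{1}{2\pi i}\int_{\Gamma}w^{2n}exp_q(w)F(w)\,dw-\sum_{n=0}^{\infty}\widetilde{B}_n(z)\,\frac{1}{2\pi i}\int_{\Gamma}w^{2n}F(w)\,dw.\]

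It remains to identify the two families of integrals with the stated $q$-derivative coefficients. Applying the symmetric operator $\delta_q/\delta_q z$ repeatedly to the integral representation and using $\delta_q\,exp_q(zw)/\delta_q z=w\,exp_q(zw)$ from \eqref{delta E}, differentiation under the integral sign gives $\delta_q^{2n}f(z)/\delta_q z^{2n}=(2\pi i)^{-1}\int_\Gamma w^{2n}exp_q(zw)F(w)\,dw$. Evaluating at $z=1$ and at $z=0$ (where $exp_q(0)=1$) identifies the first integral with $\delta_q^{2n}f(1)/\delta_q z^{2n}$ and the second with $\delta_q^{2n}f(0)/\delta_q z^{2n}$, which upon substitution yields exactly the asserted $q$-Lidstone expansion. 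The only genuinely delicate points are the type estimate that produces condition \eqref{Co. alpha} and the uniform-convergence justification of the termwise integration on $\Gamma$; everything else is a formal consequence of \eqref{delta E} together with the kernel identity of Proposition \ref{Prop:3}.
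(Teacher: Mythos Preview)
Your proposal is correct and follows essentially the same route as the paper: take $\Psi=exp_q$ in Theorem~\ref{thm-Kernel expansion}, use Proposition~\ref{prop:Ramiss} to bound the $\Psi$-type by $q^{\frac{1}{4}-\frac{\alpha}{2}}$ (or $0$ in case~(ii)), choose $\Gamma$ inside $|w|<S_1$, substitute the kernel identity of Proposition~\ref{Prop:3}, and read off the coefficients via \eqref{delta E}. Your write-up in fact supplies the uniform-convergence justification for exchanging sum and integral that the paper leaves implicit, but otherwise the arguments coincide.
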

\begin{proof}
We apply Theorem~\ref{thm-Kernel expansion} when $\Psi(z)$ chosen as $exp_q(z)$ and
 \[ \Psi_n=\frac{q^{\frac{n(n-1)}{4}}}{[n]_q!}.\]  Notice, the sequence  \[\dfrac{\Psi_{n+1}}{\Psi_n}=\frac{q^{n/2}(1-q)}{1-q^{n+1}}= \frac{q^{n/2}}{[n+1]_q}\]
is decreasing and vanishes at $\infty$. By using Proposition \ref{prop:Ramiss}, we have for any entire function  $f(z)=\sum_{=0}^{\infty} a_n z^n$ of $q^{-1}$- exponential growth of order $k$ and a finite type $\alpha$, there exists a real number $K>0$ such that
\[|a_n|\leq Kq^{\frac{(n-\alpha)^2}{2k}}.\]
According to the assumption, we have two cases:

\noindent Case 1. If $k=2$, then $|a_n|\leq Kq^{\frac{(n-\alpha)^2}{4}}$. This implies \eqref{tau} holds and  $f\in \mathcal{R}_{\Psi}$. Here, the $\Psi$-type of the function $f$ given by
\begin{eqnarray*}
\tau &:=&\limsup_{n\rightarrow \infty}  \Big|\frac{a_n}{\Psi_n}\Big|^{\frac{1}{n}} \\
&\leq& \frac{q^{\frac{1}{4}- \alpha/2}}{(1-q)} \limsup_{n\rightarrow \infty}  \Big( K\,(q;q)_n q^{\alpha^2/4}\Big)^{\frac{1}{n}}\\
 &\leq&   q^{\frac{1}{4}- \alpha/2}< S_1.\end{eqnarray*}

\noindent Case 2. If $k<2$, then $\tau=0$.

\noindent So, we can take  $D(f)$ lies in the closed disk $|w|\leq \tau \leq q^{\frac{1}{4}- \alpha/2} < S_1$ and take the curve $\Gamma$  as the circle $|w|=\tau+\epsilon <S_1$, $\epsilon>0$ which encloses $D(f)$. Note that the inequality $q^{\frac{1}{4}- \alpha/2} < S_1$ satisfies the condition \eqref{Co. alpha} on the type of $f(z)$. We obtain
\[f(z)=\frac{1}{2\pi i} \int_{\Gamma} exp_q(zw) F(w)\,dw.\]
Therefore,
\begin{eqnarray*}
    \frac{\delta^{2n}_q \, f(0)}{\delta_q z^{2n}}&=&\frac{1}{2\pi i}
\int_{\Gamma} \frac{\delta^{2n}_q}{\delta_q z^{2n}}\, exp_q (zw)|_{z=0}\, F(w)\,dw\\
&=&\frac{1}{2\pi i} \int_{\Gamma}  w^{2n}\, F(w)\,dw ,\\
\frac{\delta^{2n}_q \, f(1)}{\delta_q z^{2n}}&=&\frac{1}{2\pi i}\int_{\Gamma}  w^{2n}\,exp_q (w) \,F(w)\,dw
\end{eqnarray*}
Now, by using  Proposition \ref{Prop:3} we have
 \begin{eqnarray*}
&& f(z)= \frac{1}{2\pi\,i}\int_{\Gamma}exp_q(zw)\,F(w)\,dw \\
 &=&\frac{1}{2\pi\,i}\int_{\Gamma}\left\{ exp_q(w)\, \sum_{n=0}^{\infty} \widetilde{A}_n(z)w^{2n} -\sum_{n=0}^{\infty}\widetilde{B}_n(z)w^{2n}     \right\} F(w)\,dw\\
  &=& \sum_{n=0}^{\infty}\left[\widetilde{A}_n(z)\,\frac{\delta^{2n}_q \, f(1)}{\delta_q z^{2n}}-\widetilde{B}_n(z)\,
\frac{\delta^{2n}_q \, f(0)}{\delta_q z^{2n}} \right].
  \end{eqnarray*}
  \end{proof}
\begin{rem}\label{Rem.1}
In Theorem \ref{Thm:qLidstone expansion 2}, it is obvious if
$$ \frac{\delta^{2n}_q \, f(0)}{\delta_q z^{2n}}= \frac{\delta^{2n}_q \, f(1)}{\delta_q z^{2n}}=0, \,\, (n\in \mathbb{N})$$ then $f(z)$ is identically zero.
\end{rem}
The following example shows that the sign of equality can not be admitted in \eqref{Co. alpha}.
\begin{exa}
 Consider $f(z)= S_q(S_1z)$. Then $f$ is an entire function of
$q^{-1}$-exponential growth of order $2$  and a finite type $\alpha= \frac{1}{2}-2\, \frac{\log_q S_1}{\log_q q}$. By using \eqref{delta Sine and Cosine}, one can verify that
$$\frac{\delta^{2n}_q \, f(0)}{\delta_q z^{2n}}= \frac{\delta^{2n}_q \, f(1)}{\delta_q z^{2n}}=0\,\, (n\in \mathbb{N}).$$
This implies the $q$-Lidstone expansion of $f(z)$ vanishes identically but the function does not.
\end{exa}
\vskip 5mm
We end this section by given the $q$-Lidstone series of the functions $(z;q)_n$.
\begin{exa}
Consider the functions $g_n(z)=(z;q)_n$, $n\in \mathbb{N}$. Then, Condition (ii) of Theorem \ref{Thm:qLidstone expansion 2} is satisfied. So, these polynomials have a $q$-Lidstone representation
\[g_n(z)=\sum_{m=0}^{n}\left[\widetilde{A}_m(z)\,\frac{\delta^{2m}_q \,g_n(1)}{\delta_q z^{2m}}-\widetilde{B}_m(z)\,
\frac{\delta^{2m}_q \, g_n(0)}{\delta_q z^{2m}}\right].\]
One can verify that
\[\begin{gathered} \frac{\delta^{2m}_q \, g_n(z)}{\delta_q z^{n}}= q^{-m(m+\frac{1}{2})}\, [n]_q[n-1]_q...[n-2m+1]_q (q^mz;q)_{n-2m}.\end{gathered}\]
Therefore, $g_n(z)$  have the convergent $q$-Lidstone representation
\[\begin{gathered}
g_n(z)=\\ \sum_{m=0}^{n} q^{-m(m+\frac{1}{2})}\, [n]_q\,[n-1]_q\, ...\, [n-2m+1]_q  \left[(q^m;q)_{n-2m}\, \widetilde{A}_m(z) -\widetilde{B}_m(z)\right].\end{gathered}\]
\end{exa}

\section{{\bf A $q$-Lidstone series involving $q$-Euler polynomials}}
In this section, we introduce  another $q$-extension of Lidstone theorem. We expand the function in $q$-Lidstone polynomials which are $q$-Euler polynomials $\widetilde{E}_n(z;q)$  defined by the generating function \eqref{Defn.Euler}.

\noindent All the results can be studied in the same manner of the results of the previous section.
\begin{prop}\label{prop:Mn} If $z $ and $w$ are complex numbers
such that $|w|<C_1$, then
\be\label{Eq:Mn}
Cosh_q(wz)\, Sech_q(w)
=\sum_{n=0}^{\infty}\widetilde{M}_n(z) w^{2n},\ee
where \be\label{Def:Mn}\widetilde{M}_n(z):=\frac{2^{2n}}{[2n]_q!} \widetilde{M}_{2n}(z/2;q),\ee
and $\widetilde{M}_{n}(z;q)$ are the $q$-polynomials defined in \eqref{M_n(z;q)}.
\end{prop}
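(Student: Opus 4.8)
The plan is to mirror the proof of Proposition~\ref{prop:An}, replacing the odd data $Sinh_q$, $Csch_q$, $\widetilde{A}_n(z;q)$ by the even data $Cosh_q$, $Sech_q$, $\widetilde{M}_n(z;q)$. Set $h_q(z,w):=Cosh_q(wz)\,Sech_q(w)$. The first task is to pin down the region of holomorphy in $w$. From \eqref{sh+cosh} we have $Cosh_q(w)=C_q(iw)$, so $Cosh_q(w)=0$ exactly when $iw$ is a zero of $C_q$. Since the zeros of $C_q$ are real and simple with smallest positive member $C_1$, every zero of $Cosh_q$ is purely imaginary of modulus at least $C_1$; in particular $Cosh_q(w)\neq0$ for $|w|<C_1$, and moreover $Cosh_q(0)=1$, so unlike the $Csch_q$ case there is no pole to absorb at the origin. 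Hence $Sech_q(w)$, and therefore $h_q(z,w)$, is holomorphic in $w$ on $|w|<C_1$, which legitimizes the power-series expansion and the term-by-term manipulations below.

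Next I would convert $h_q$ into a ratio of third $q$-exponentials. Using \eqref{sh+cosh} once more, the prefactors $1/2$ and $2$ coming from $Cosh_q(wz)$ and $Sech_q(w)$ cancel and
\[
h_q(z,w)=\frac{exp_q(wz)+exp_q(-wz)}{exp_q(w)+exp_q(-w)}.
\]
I would split the right-hand side into $\dfrac{exp_q(zw)}{exp_q(w)+exp_q(-w)}$ and $\dfrac{exp_q(-zw)}{exp_q(w)+exp_q(-w)}$, and recognise each as a rescaling of the generating function \eqref{M_n(z;q)}. Substituting $z\mapsto z/2$ and $w\mapsto 2w$ in \eqref{M_n(z;q)} gives
\[
\frac{exp_q(zw)}{exp_q(w)+exp_q(-w)}=\sum_{n=0}^{\infty}\widetilde{M}_n(z/2;q)\,\frac{(2w)^n}{[n]_q!},
\]
and since $exp_q(w)+exp_q(-w)$ is even in $w$, the substitution $z\mapsto z/2$, $w\mapsto -2w$ yields the companion expansion with $(-2w)^n$ in place of $(2w)^n$.

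Adding these two series gives $h_q(z,w)=\sum_{n=0}^{\infty}\widetilde{M}_n(z/2;q)\,\frac{(2w)^n+(-2w)^n}{[n]_q!}$, and the proof concludes with the now familiar parity bookkeeping of Corollary~\ref{cor:A}: the odd-index terms cancel, the surviving even-index terms carry only powers $w^{2m}$, and collecting them while invoking the abbreviation \eqref{Def:Mn} for $\widetilde{M}_n(z)$ produces the claimed identity \eqref{Eq:Mn}. I expect the only step requiring genuine care to be the holomorphy claim, namely the localization of the zeros of $Cosh_q$ through those of $C_q$, since this is what fixes the radius $C_1$ and justifies the term-by-term expansion; the remaining algebra is the same routine even/odd separation already used for $\widetilde{A}_n(z)$ in Proposition~\ref{prop:An}.
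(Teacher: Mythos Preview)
Your approach is correct and is precisely the even analogue of the proof of Proposition~\ref{prop:An} that the paper intends; the authors give no separate argument for this proposition, writing only that ``all the results can be studied in the same manner of the results of the previous section.'' One small caveat worth noting when you do the final bookkeeping: the surviving even terms give
\[
h_q(z,w)=\sum_{m\ge 0}\frac{2^{2m+1}}{[2m]_q!}\,\widetilde{M}_{2m}(z/2;q)\,w^{2m},
\]
so the constant in \eqref{Def:Mn} should be $2^{2n+1}$ rather than $2^{2n}$ for \eqref{Eq:Mn} to hold literally (check $w=0$: the left side is $1$ while $\widetilde{M}_0(z/2;q)=\tfrac12$); this is a typo in the paper's statement, not a defect in your method.
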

\begin{prop}\label{prop:2E} If  $z $ and $w$ are complex numbers such
that $|w|<C_1$, then
\be\begin{gathered}
\dfrac{exp_q(zw)exp_q(-w)-exp_q(-zw)exp_q(w)}{exp_q(w)+exp_q(-w)}\\
=\sum_{n=0}^{\infty}\frac{w^{2n+1}}{[2n+1]_q!}2^{2n+1}\widetilde{E}_{2n+1}(z/2;q).
\end{gathered}\ee
\end{prop}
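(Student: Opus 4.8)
The plan is to mirror the proof of Proposition~\ref{prop:2} verbatim in structure, replacing the $q$-Bernoulli generating function by the $q$-Euler generating function \eqref{Defn.Euler}. First I would record that the left-hand side is holomorphic on the disk in question: its denominator equals $exp_q(w)+exp_q(-w)=2\,Cosh_q(w)$, and by the identity $Cosh_q(w)=C_q(iw)$ from \eqref{sh+cosh} together with the reality of the roots of $C_q$, every zero of $Cosh_q$ is purely imaginary with modulus at least $C_1$. Hence the left-hand side is holomorphic for $|w|<C_1$ and admits a convergent power series there, which is precisely the region in which the asserted identity is claimed.

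The key algebraic step is a double substitution. In \eqref{Defn.Euler} I would replace the generating variable by $2w$ and set the polynomial argument to $z/2$, which yields
\[
\dfrac{2\,exp_q(zw)\,exp_q(-w)}{exp_q(w)+exp_q(-w)}=\sum_{n=0}^{\infty}\widetilde{E}_n(z/2;q)\,\frac{(2w)^n}{[n]_q!}.
\]
The crucial simplification, which is cleaner than in the Bernoulli case, is that the denominator $exp_q(w)+exp_q(-w)$ is invariant under $w\mapsto -w$. Sending $w\mapsto -w$ in the displayed identity therefore produces
\[
\dfrac{2\,exp_q(-zw)\,exp_q(w)}{exp_q(w)+exp_q(-w)}=\sum_{n=0}^{\infty}\widetilde{E}_n(z/2;q)\,\frac{(-2w)^n}{[n]_q!},
\]
which is exactly the second piece appearing in the numerator of the left-hand side.

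Then I would write the left-hand side as $\tfrac{1}{2}$ times the difference of these two expressions and subtract the two series term by term. The even-index contributions cancel because $(2w)^{2m}-(-2w)^{2m}=0$, while each odd index $n=2m+1$ contributes $2\,(2w)^{2m+1}$; after the overall factor $\tfrac{1}{2}$ this collapses to
\[
\sum_{m=0}^{\infty}\widetilde{E}_{2m+1}(z/2;q)\,\frac{2^{2m+1}\,w^{2m+1}}{[2m+1]_q!},
\]
which is the asserted formula. The single structural difference from Proposition~\ref{prop:2} is worth flagging: because the Euler generating function carries the constant $2$ rather than the factor $w$ in its numerator, there is no division by $w$, so the surviving powers are the odd $w^{2m+1}$ rather than the even $w^{2m}$ of the Bernoulli case. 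I do not expect a genuine obstacle here; the only care needed is the bookkeeping of the two substitutions and the exploitation of the $w\mapsto -w$ symmetry of the denominator, both of which are routine, so the statement is indeed provable in the same manner as Proposition~\ref{prop:2}.
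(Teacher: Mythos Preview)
Your proposal is correct and follows exactly the approach the paper intends: it explicitly says that all results of Section~6 ``can be studied in the same manner of the results of the previous section,'' and your argument mirrors the proof of Proposition~\ref{prop:2} step for step, with the appropriate replacement of the Bernoulli generating function by the Euler one and the factor $\tfrac{1}{2}$ in place of $\tfrac{1}{2w}$. Your additional remark on holomorphy via $Cosh_q(w)=C_q(iw)$ is a welcome justification the paper leaves implicit.
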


\begin{prop}\label{Prop:3E} If $z$ and $w$ are complex numbers such
that  $|w|<C_1$,  then
\be
 exp_q(zw)=exp_q(w)\, \sum_{n=0}^{\infty} \widetilde{M}_n(z)w^{2n} -\sum_{n=0}^{\infty}\widetilde{N}_{n+1}(z)w^{2n+1},
\ee where
\[\widetilde{N}_{n+1}(z)=\frac{2^{2n+1}}{[2n+1]_q!}\widetilde{E}_{2n+1}(z/2;q).\]
\end{prop}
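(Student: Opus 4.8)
The plan is to derive this identity algebraically from the two preceding propositions, exactly as Proposition~\ref{Prop:3} was obtained from Proposition~\ref{prop:An} and Proposition~\ref{prop:2} in the $q$-Bernoulli setting. The key observation is the decomposition
\[\begin{gathered}exp_q(zw)=\dfrac{exp_q(zw)\,exp_q(-w)-exp_q(-zw)\,exp_q(w)}{exp_q(-w)-exp_q(w)}\\
+\,exp_q(w)\,\dfrac{exp_q(zw)+exp_q(-zw)}{exp_q(w)+exp_q(-w)},\end{gathered}\]
which one checks by combining the two right-hand fractions over the common structure and cancelling. Note this is the even/odd analogue of the identity used before Proposition~\ref{Prop:3}: instead of splitting $exp_q(zw)$ against $exp_q(w)-exp_q(-w)$, we now split it against $exp_q(w)+exp_q(-w)$, which is natural since we are working with $Cosh_q$ and $Sech_q$ rather than $Sinh_q$ and $Csch_q$.

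First I would verify the algebraic identity above directly. Multiplying out, the first term equals $-\dfrac{exp_q(zw)exp_q(-w)-exp_q(-zw)exp_q(w)}{exp_q(w)-exp_q(-w)}$ and the second term is $exp_q(w)\,Cosh_q(wz)\,Sech_q(w)$ after recognizing $\dfrac{exp_q(zw)+exp_q(-zw)}{2}=Cosh_q(wz)$ via \eqref{sh+cosh} and $\dfrac{2}{exp_q(w)+exp_q(-w)}=Sech_q(w)$. The cross terms $exp_q(zw)exp_q(w)$ and $exp_q(-zw)exp_q(-w)$ must cancel against the numerator of the first fraction, leaving exactly $exp_q(zw)$ on the left; this is the only computation requiring care, and it is purely formal.

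Second, I would substitute the series expansions supplied by the two prior propositions. By Proposition~\ref{prop:Mn}, the second summand contributes $exp_q(w)\sum_{n=0}^{\infty}\widetilde{M}_n(z)w^{2n}$. By Proposition~\ref{prop:2E}, the first summand contributes $-\sum_{n=0}^{\infty}\frac{w^{2n+1}}{[2n+1]_q!}2^{2n+1}\widetilde{E}_{2n+1}(z/2;q)$, and recognizing the definition $\widetilde{N}_{n+1}(z)=\frac{2^{2n+1}}{[2n+1]_q!}\widetilde{E}_{2n+1}(z/2;q)$ rewrites this as $-\sum_{n=0}^{\infty}\widetilde{N}_{n+1}(z)w^{2n+1}$. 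Adding the two yields the claimed formula. Both series converge for $|w|<C_1$, matching the hypothesis, so the term-by-term manipulation is justified throughout.

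The main obstacle, such as it is, lies in the sign bookkeeping of the algebraic decomposition: the first fraction appears with denominator $exp_q(-w)-exp_q(w)$, which is the negative of the denominator in Proposition~\ref{prop:2E}, so one must track the resulting minus sign carefully to land on $-\widetilde{N}_{n+1}(z)$ rather than $+\widetilde{N}_{n+1}(z)$. Beyond that, the proof is entirely parallel to the $q$-Bernoulli case and requires no new analytic input; indeed the paper's own phrasing that ``all the results can be studied in the same manner'' signals that the argument is a direct transcription, with $Cosh_q/Sech_q$ and the $q$-Euler polynomials replacing $Sinh_q/Csch_q$ and the $q$-Bernoulli polynomials.
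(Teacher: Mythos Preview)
Your overall strategy --- write an algebraic decomposition of $exp_q(zw)$ and then invoke Propositions~\ref{prop:Mn} and~\ref{prop:2E} --- is exactly what the paper intends (it says the results ``can be studied in the same manner'' as Section~\ref{q-Lidstone-S}). But the particular decomposition you wrote down is false. With $a=exp_q(zw)$, $b=exp_q(-zw)$, $c=exp_q(w)$, $d=exp_q(-w)$, your identity reads $a=\dfrac{ad-bc}{d-c}+c\,\dfrac{a+b}{c+d}$. Putting the right-hand side over the common denominator $d^2-c^2$ gives numerator $2acd+ad^2-ac^2-2bc^2$, which equals $a(d^2-c^2)$ only when $ad=bc$; the ``check by combining and cancelling'' you allude to therefore does not go through.

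The slip is that you carried over the first fraction verbatim from the Bernoulli argument, keeping its denominator $exp_q(-w)-exp_q(w)$, whereas the denominator in Proposition~\ref{prop:2E} is $exp_q(w)+exp_q(-w)$. Your own remark that this denominator is ``the negative of the denominator in Proposition~\ref{prop:2E}'' confirms the confusion: $d-c$ is not $\pm(c+d)$, so no sign flip connects your first term to Proposition~\ref{prop:2E} at all. The decomposition actually needed keeps \emph{both} fractions over $c+d$:
\[
exp_q(zw)=exp_q(w)\,\dfrac{exp_q(zw)+exp_q(-zw)}{exp_q(w)+exp_q(-w)}
+\dfrac{exp_q(zw)\,exp_q(-w)-exp_q(-zw)\,exp_q(w)}{exp_q(w)+exp_q(-w)},
\]
which is simply $a=\dfrac{c(a+b)+(ad-bc)}{c+d}$. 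Substituting the expansions from Propositions~\ref{prop:Mn} and~\ref{prop:2E} then gives the result directly. (Note that this correct identity yields a $+$ in front of the $\widetilde{N}_{n+1}$-sum; the minus sign in the displayed statement of Proposition~\ref{Prop:3E} appears to be a typographical slip in the paper, but that is separate from the gap in your argument.)
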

\begin{thm}
Assume that one of the following conditions hold:
\begin{enumerate}
\item[(i)] The function   $f(z)$ is an entire function of
$q^{-1}$-exponential growth of order $2$  and a finite type $\alpha$,
where \begin{equation}\label{Co. alpha C}\alpha< 2 \left(\frac{1}{4}-\frac{\log C_1}{\log q}\right);
 \end{equation}
\noindent \item[(ii)]  The function $f(z)$ is an entire function of $q^{-1}$-exponential  growth  of order less than $2$.
\end{enumerate}
Then  $f(z)$ has the convergent representation
\[f(z)=\sum_{n=0}^{\infty}\left[\widetilde{M}_n(z)\frac{\delta^{2n}_q \, f(1)}{\delta_q z^{2n}}-\widetilde{N}_{n+1}(z)
\frac{\delta^{2n+1}_q \, f(0)}{\delta_q z^{2n+1}}\right],\]
where $\widetilde{M}_n$ is the polynomial defined in \eqref{Def:Mn} and
\[\widetilde{N}_{n+1}(z):=\frac{2^{2n+1}}{[2n+1]_q!}\, \widetilde{E}_{2n+1}(z/2;q).\]
\end{thm}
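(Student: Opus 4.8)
The plan is to mirror the proof of Theorem~\ref{Thm:qLidstone expansion 2} verbatim in structure, replacing the role of $S_1$ by $C_1$ and the Bernoulli splitting of Proposition~\ref{Prop:3} by the Euler splitting of Proposition~\ref{Prop:3E}. First I would invoke the kernel expansion of Theorem~\ref{thm-Kernel expansion} with the comparison function $\Psi(z)=exp_q(z)$, so that $\Psi_n=q^{n(n-1)/4}/[n]_q!$. As already observed, the ratio $\Psi_{n+1}/\Psi_n=q^{n/2}/[n+1]_q$ decreases monotonically to $0$, so $\Psi$ is a legitimate comparison function and Nachbin's theorem is applicable.

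Next I would estimate the Taylor coefficients $a_n$ of $f$. By Proposition~\ref{prop:Ramiss}, under hypothesis (i) with $k=2$ one has $|a_n|\le K\,q^{(n-\alpha)^2/4}$, and computing the $\Psi$-type exactly as in the Bernoulli case yields $\tau=\limsup_n|a_n/\Psi_n|^{1/n}\le q^{1/4-\alpha/2}$. The crucial point is that, since $0<q<1$ forces $\log q<0$, taking logarithms in~\eqref{Co. alpha C} and dividing by $\log q$ shows that the hypothesis on $\alpha$ is precisely equivalent to $q^{1/4-\alpha/2}<C_1$; hence $\tau<C_1$. Under hypothesis (ii) ($k<2$) the same computation forces $\tau=0$. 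In either case the generalized Borel transform $F$ has all of its singular set $D(f)$ inside the closed disk $|w|\le\tau<C_1$, so I may take $\Gamma$ to be the circle $|w|=\tau+\epsilon<C_1$ and obtain the representation $f(z)=\frac{1}{2\pi i}\int_{\Gamma}exp_q(zw)\,F(w)\,dw$.

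With the contour fixed, differentiating the kernel by means of \eqref{delta E} (the symmetric $q$-derivative acts on $exp_q(zw)$ by multiplication by $w$) together with $exp_q(0)=1$ gives, for every $m$, the two moment families
\[
\frac{\delta^{m}_q\, f(0)}{\delta_q z^{m}}=\frac{1}{2\pi i}\int_{\Gamma}w^{m}\,F(w)\,dw,\qquad
\frac{\delta^{m}_q\, f(1)}{\delta_q z^{m}}=\frac{1}{2\pi i}\int_{\Gamma}w^{m}\,exp_q(w)\,F(w)\,dw.
\]
I would then substitute the Euler splitting of Proposition~\ref{Prop:3E} into the kernel integral: the summand $exp_q(w)\sum_n\widetilde{M}_n(z)w^{2n}$ reproduces the even-order moments at $1$ weighted by $\widetilde{M}_n(z)$, while the summand $\sum_n\widetilde{N}_{n+1}(z)w^{2n+1}$ reproduces the odd-order moments at $0$ weighted by $\widetilde{N}_{n+1}(z)$, giving exactly the stated expansion.

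The main obstacle is justifying the interchange of summation and integration. This rests on observing that, by Propositions~\ref{prop:Mn} and~\ref{prop:2E}, the two series in Proposition~\ref{Prop:3E} are the power-series expansions of $Cosh_q(wz)\,Sech_q(w)$ and of a ratio whose denominator $exp_q(w)+exp_q(-w)=2\,Cosh_q(w)=2\,C_q(iw)$ first vanishes at $|w|=C_1$. Therefore both series are holomorphic in $|w|<C_1$ and converge uniformly on the compact circle $|w|=\tau+\epsilon<C_1$; combined with the continuity of $F$ on $\Gamma$, this legitimizes term-by-term integration and establishes convergence of the expansion. The only place where the Euler-specific constant $C_1$ (rather than $S_1$) enters is in ensuring, via~\eqref{Co. alpha C}, that $\Gamma$ stays inside this disk of holomorphy.
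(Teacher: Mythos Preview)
Your proposal is correct and is exactly the argument the paper intends: the paper does not spell out a proof for this theorem at all, stating only that ``All the results can be studied in the same manner of the results of the previous section,'' so mirroring Theorem~\ref{Thm:qLidstone expansion 2} with $C_1$ in place of $S_1$ and Proposition~\ref{Prop:3E} in place of Proposition~\ref{Prop:3} is precisely what is meant. Your added remark justifying the interchange of summation and integration via uniform convergence on $\Gamma$ is in fact more detailed than anything the paper provides even in the Bernoulli case.
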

As in Remark \ref{Rem.1}, the sign of equality can not be admitted in \eqref{Co. alpha C}. For example, the function $f(z)=
 \text{C}_q (C_1z)$ is a function of type $\left(\frac{1}{2}-2\frac{\log C_1}{\log q}\right)$ and one can verify that
 \[\frac{\delta^{2n}_q \, f(1)}{\delta_q z^{2n}} = 0 = \frac{\delta^{2n+1}_q \, f(0)}{\delta_q z^{2n+1}}.\]
 Hence, the $q$-Lidstone expansion of $f(z)$ vanishes while the function does not.
\section{\bf{ Concluding Remarks}}
The $q$-Lidstone's series approximates an entire function in a neighborhood of two points in terms of $q$-analog of Lidstone polynomials. In \cite{Ismail and Mansour}, the authors introduced these polynomials which were $q$-Bernoulli polynomials generated by second Jackson $q$-Bessel function.

In this paper, we presented $q$-Bernoulli and $q$-Euler polynomials generated by the third Jackson $q$-Bessel function to construct new types of $q$-Lidstone expansion theorem \cite{Ismail and Mansour}.

 This work provides the basis for several applications that we can search in the future. Firstly, we are interested in studying the generalization of $q$-Lidstone's series. The analogous problem for the classical case  was studied in \cite{Whittaker} by Whittaker. Secondly, we are interested in constructing the $q$-Fourier series for the $q$-Lidstone polynomials $\widetilde{A}_n(z)$ and  $\widetilde{B}_n(z)$, and applying such expansions to a solution of certain $q$-boundary value problems as in \cite{Mansour and AL-Towaileb} and \cite{Mansour and AL-Towaileb 2}.


\end{document}